\def\bbbz{\mathbb Z}
\def\bbbn{\mathbb N}
\def\cch{\mathcal H}
\def\ccsc{\mathcal {SC}}
\def\ccdd{\mathcal {DD}}
\def\ccp{\mathcal P}
\DeclareMathOperator{\BGP}{BG}
\newtheorem{theorem}{Theorem}[section]
\newtheorem{proposition}[theorem]{Proposition}
\newtheorem{definition}[theorem]{Definition}
\newtheorem{corollary}[theorem]{Corollary}
\newtheorem{lemma}[theorem]{Lemma}
\theoremstyle{remark}
\newtheorem{remark}[theorem]{Remark}
\newcommand\bi[2]{{{#1}\atopwithdelims(){#2}}}
\numberwithin{equation}{section}
\begin{document}
\title[ Multiplication theorems for self-conjugate partitions]{Multiplication theorems for self-conjugate partitions}
%\author{Frits~Beukers and Fr\'ed\'eric~Jouhet}

\author[David Wahiche]{David Wahiche}
\address{Institut Camille Jordan, Universit\'e Claude Bernard Lyon 1,
69622 Villeurbanne Cedex, France}
\email{wahiche@math.univ-lyon1.fr}
%\urladdr{http://math.univ-lyon1.fr/{\textasciitilde}jouhet/}

\thanks{}

\date{}

\subjclass[2010]{}

%\keywords{hypergeometric series, basic hypergeometric series,
%difference equations, difference modules, differential equations, D-modules.}

%\begin{document}

\begin{abstract}
In 2011, Han and Ji proved addition-multiplication theorems for integer partitions, from which they derived modular analogues of many classical identities involving hook-length. In the present paper, we prove addition-multiplication theorems for the subset of self-conjugate partitions. Although difficulties arise due to parity questions, we are almost always able to include the BG-rank introduced by Berkovich and Garvan. This gives us as consequences many self-conjugate modular versions of classical hook-lengths identities for partitions. Our tools are mainly based on fine properties of the Littlewood decomposition restricted to self-conjugate partitions.
\end{abstract}

\maketitle

%%%%%%%%%%%%%%%%%%%%%%%%%%%%%%%%%%%%%%%%%%%%%%%%%
\section{Introduction and notations}
%%%%%%%%%%%%%%%%%%%%%%%%%%%%%%%%%%%%%%%%%%%%%%%%%
%The basic notions and definitions can be found in Han's article\cite{Han08} and in \cite{CHS}. 
Formulas involving hook-length abound in combinatorics and representation theory. One illustrative example is the hook-length formula discovered in 1954 by Frame, Robinson and Thrall \cite{FRT}, stating the equality between the number $f^\lambda$ of standard Young tableaux of shape $\lambda$ and size $n$, and the number of permutations of $\lbrace 1,\dots,n\rbrace$ divided by the product of the elements of the hook-lengths multiset $\cch(\lambda)$ of $\lambda$, namely:
\begin{equation*}
f^\lambda=\frac{n!}{\displaystyle\prod_{h\in\cch(\lambda)}h}\cdot
\end{equation*}
A much more recent identity is the Nekrasov--Okounkov formula. It was
discovered independently by Nekrasov and Okounkov in their work on random partitions and Seiberg--Witten theory \cite{NO}, and by Westbury \cite{We} in his work on universal characters for $\mathfrak{sl}_n$. This formula is commonly stated as follows:
\begin{equation}\label{NOdebut}
\sum_{\lambda\in\ccp}q^{\lvert \lambda\rvert}\prod_{h\in\cch(\lambda)}\left(1-\frac{z}{h^2}\right)=\prod_{k\geq 1}\left(1-q^k\right)^{z-1},
\end{equation}
where $z$ is a fixed complex number.
This identity was later obtained independently by Han \cite{Ha}, using combinatorial tools and the Macdonald identities for type $A_t$~\cite{Mac}.

Recall that a \textit{partition} $\lambda$ of a positive integer $n$ is a nonincreasing sequence of positive integers $\lambda=(\lambda_1,\lambda_2,\dots,\lambda_\ell)$ such that $\lvert \lambda \rvert := \lambda_1+\lambda_2+\dots+\lambda_\ell = n$. The integers $\lambda_i$ are called the \textit{parts} of $\lambda$, the number of parts $\ell$ being the \textit{length} of $\lambda$, denoted by $\ell(\lambda)$. The well-known generating series for $\ccp$ can also be obtained by \eqref{NOdebut} with $z=0$:
\begin{equation}\label{gspartitions}
\sum_{\lambda\in\ccp}q^{\vert \lambda\vert}=\prod_{j\geq 1}\frac{1}{1-q^j}.
\end{equation}

%The set of all partitions of $n$ is denoted by $\mathcal{P}(n)$. The set of all partitions is denoted by $\mathcal{P}$ so that:
%\[\mathcal{P}=\underset{n\in\bbbn}{\bigcup}\mathcal{P}(n).\]
Each partition can be represented by its Ferrers diagram, which consists in a finite collection of boxes arranged in left-justified rows, with the row lengths in non-increasing order. The \textit{Durfee square} of $\lambda$ is the maximal square fitting in the Ferrers diagram. Its diagonal will be called the main diagonal of $\lambda$. Its size will be denoted $d=d(\lambda):=\max(s | \lambda_s\geq s)$. As an example, in Figure~\ref{fig:ferrers}, the Durfee square of $\lambda=(4,3,3,2)$, which is a partition of $12$ of length $4$, is coloured in red.

\begin{figure}
\centering
\begin{subfigure}[t]{.3\textwidth}
\centering
%\YFrench
%\yng(4,3,3,2)
%\\\hspace*{6cm}\includegraphics{partitionrapport.png}
%\ytableausetup{centertableaux}
%\ytableausetup{baseline}
%
\begin{ytableau}
  \none[\lambda_1]  &  *(red!60)  &*(red!60)  & *(red!60) & \\
       \none[\lambda_2] & *(red!60) &*(red!60) &*(red!60) & \none \\
\none[\lambda_3] & *(red!60) &*(red!60)  &*(red!60) &\none \\
\none[\lambda_4] &  &  &\none & \none 
% \none & \none[\lambda_1'] & \none[\lambda_2'] & \none[\lambda_3']&\none[\lambda_4']
\end{ytableau}
%\centering
%%\YFrench
%%\yng(4,3,3,2)
%%\\\hspace*{6cm}\includegraphics{partitionrapport.png}
%%\centering
%%\YFrench
%\young(!<none><\Yfillcolour{red!60}>!<none>!<none><\Yfillcolour{white}>41,542,4!<\Yfillcolour{red!60}>3!<\Yfillcolour{white}>1,21)
\caption{Durfee square}
%\caption{Ferrers diagram of partition $(4,3,3,2)$ with Durfee square in red.}
\label{fig:ferrers}
\end{subfigure}
%\hfill
\begin{subfigure}[t]{.3\textwidth}
\centering
%\YFrench
%\yng(4,3,3,2)
%\\\hspace*{6cm}\includegraphics{partitionrapport.png}
%\centering
%\YFrench
%\young(7!<\Yfillcolour{red!60}>6!<\Yfillcolour{white}>41,542,4!<\Yfillcolour{red!60}>3!<\Yfillcolour{white}>1,21)
\begin{ytableau}
  7  &*(red!60) 6 & 4 & 1\\
 5 &4 &2 & \none \\
 4 &*(red!60)3  &1 &\none \\
 2 & 1 &\none & \none \\
% \none
\end{ytableau}
%\caption{Ferrers diagram filled with hook-lengths and boxes with hook-lengths $\pmod 3$.}
\caption{hook-lengths}
\label{fig:hooks}
\end{subfigure}
\begin{subfigure}[t]{.3\textwidth}
\centering
%\YFrench
%\yng(4,3,3,2)
%\\\hspace*{6cm}\includegraphics{partitionrapport.png}
%\centering
%\YFrench
%\young(+-+-,-+-,+-+,-+)
\begin{ytableau}
  +  &-& + & -\\
 - &+ &- & \none \\
 + &- &+ &\none \\
 - & + &\none & \none \\
% \none
\end{ytableau}
\caption{BG-rank}
%\caption{Ferrers diagram filled with signs before summing to a BG-rank of $0$.}
\label{fig:bg}
\end{subfigure}
\caption{Ferrers diagram and some partition statistics}
\label{fig:fig1}
\end{figure}

For each box $v$ in the Ferrers diagram of a partition $\lambda$ (for short we will say for each box $v$ in $\lambda$), one defines the \textit{arm-length} (respectively \textit{leg-length}) as the number of boxes in the same row (respectively in the same column) as $v$ strictly to the right of (respectively strictly below) the box $v$. One defines the \textit{hook-length} of $v$, denoted by $h_v(\lambda)$ or $h_v$, the number of boxes $u$ such that either $u=v$, or $u$ lies strictly below (respectively to the right) of $v$ in the same column (respectively row).
The \textit{hook-length multiset} of $\lambda$, denoted by $\mathcal{H}(\lambda)$, is the multiset of all hook-lengths of $\lambda$. For any positive integer $t$, the multiset of all hook-lengths that are congruent to $0 \pmod t$ is denoted by $\mathcal{H}_t(\lambda)$. Notice that $\mathcal{H}(\lambda)=\mathcal{H}_1(\lambda)$. A partition $\omega$ is a \textit{$t$-core} if $\cch_t(\omega)=\emptyset$. In Figure \ref{fig:hooks}, the hook-lengths of all boxes for the partition $\lambda=(4,3,3,2)$ have been written in their corresponding boxes and the boxes associated with $\mathcal{H}_3(\lambda)$ shaded in red. In the example, we have \linebreak $\mathcal{H}(\lambda)=\lbrace 2,1,4,3,1,5,4,2,7,6,4,1\rbrace$ and $\mathcal{H}_3(\lambda)=\lbrace 3,6\rbrace$.

 A \emph{rim hook} (or border strip, or ribbon) is a connected skew shape containing no $2\times2$ square. The length of a rim hook is the number of boxes in it, and its height is one less than its number of rows. By convention, the height of an empty rim hook is zero.
 
  Recall from the work of Berkovich and Garvan~\cite{BG} that the \emph{BG-rank} of the partition $\lambda$, denoted by BG$(\lambda)$, is defined as follows. First fill each box in the Ferrers diagram of $\lambda$ with alternating $\pm 1$'s along rows and columns beginning with a ``$+1$" in the $(1,1)$ position (see Figure~\ref{fig:bg}). Then sum their values over all the boxes. Note that all boxes belonging to the diagonal of a Ferrers diagram are filled with a ``$+1$". For instance, the BG-rank of $\lambda=(4,3,3,2)$ is $0$.

Let $a$ and $q$ be complex numbers such that $\vert q \vert < 1$. Recall that the $q$-Pochhammer symbol is defined as $(a;q)_0=1$ and for any integer $n\geq 1$:
\begin{flalign*}
&&(a;q)_n&= (1-a)(1-aq) \ldots (1-aq^{n-1}),&&\\
\text{and} && \displaystyle (a;q)_\infty &= \prod_{j\geq 0} (1-aq^j).
\end{flalign*}

A classical bijection in partition theory is the Littlewood decomposition (see for instance \cite[Theorem 2.7.17]{JK}). Roughly speaking, for any positive integer $t$, it transforms $\lambda\in\ccp$ into two components, namely the $t$-core $\omega$ and the $t$-quotient $\underline{\nu}$ (see Section \ref{lit} for precise definitions and properties):
$$
\lambda\in\ccp\mapsto \left(\omega,\underline{\nu}\right)\in\ccp_{(t)}\times \ccp^t.
$$
In \cite{HJ}, Han and Ji underline some important properties of the Littlewood decomposition, which enable them to prove the following multiplication-addition theorem.

\begin{theorem}
\label{Multiplication theorem} {\em \cite[Theorem 1.1]{HJ}}
Let $t$ be a positive integer and set $\rho_1,\rho_2$ two functions defined on $\mathbb{N}$. Let $f_t$ and $g_t$ be the following formal power series:
\begin{align*}
f_t(q)&:=\displaystyle\sum_{\lambda\in\mathcal{P}}q^{|\lambda|}\displaystyle\prod_{h\in\mathcal{H}(\lambda)}\rho_1(th), \nonumber
\\
g_t(q)&:=\displaystyle\sum_{\lambda\in\mathcal{P}}q^{|\lambda|}\displaystyle\prod_{h\in\mathcal{H}(\lambda)}\rho_1(th)\displaystyle\sum_{h\in\mathcal{H}(\lambda)}\rho_2(th). \nonumber \\
\end{align*}
Then we have
%\intertext{Hence:}\nonumber \\
$$
\displaystyle\sum_{\lambda\in\mathcal{P}}q^{|\lambda|}x^{|\mathcal{H}_t(\lambda)|}\displaystyle\prod_{h\in\mathcal{H}_t(\lambda)}\rho_1(h)\sum_{h\in\mathcal{H}_t(\lambda)}\rho_2(h)= t\frac{\left(q^t;q^t\right)^t_{\infty}}{\left(q;q\right)_{\infty}}\left(f_t(xq^t)\right)^{t-1}g_t(xq^t).
$$
\end{theorem}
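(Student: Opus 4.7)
The plan is to reduce the identity to the Littlewood decomposition $\lambda\mapsto(\omega,\underline{\nu})=(\omega,\nu^{(0)},\dots,\nu^{(t-1)})$ and to exploit its three fundamental properties: it is a bijection $\ccp\to\ccp_{(t)}\times\ccp^t$; it preserves size in the form $|\lambda|=|\omega|+t\sum_{i=0}^{t-1}|\nu^{(i)}|$; and it identifies the multiset $\cch_t(\lambda)$ with $t\cdot\bigsqcup_{i=0}^{t-1}\cch(\nu^{(i)})$, i.e.\ each $t$-divisible hook $h$ of $\lambda$ corresponds to some hook $h'$ of a single $\nu^{(i)}$ via $h=th'$.

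First I would rewrite each factor of the left-hand side through this dictionary. The monomial splits as $q^{|\lambda|}=q^{|\omega|}\prod_i q^{t|\nu^{(i)}|}$, the exponent of $x$ becomes $|\cch_t(\lambda)|=\sum_i|\nu^{(i)}|$ (using $|\cch(\mu)|=|\mu|$), and both the product and the sum over $\cch_t(\lambda)$ decompose componentwise:
\begin{equation*}
\prod_{h\in\cch_t(\lambda)}\rho_1(h)=\prod_{i=0}^{t-1}\prod_{h\in\cch(\nu^{(i)})}\rho_1(th),\qquad \sum_{h\in\cch_t(\lambda)}\rho_2(h)=\sum_{i=0}^{t-1}\sum_{h\in\cch(\nu^{(i)})}\rho_2(th).
\end{equation*}
Setting $F(\nu):=(xq^t)^{|\nu|}\prod_{h\in\cch(\nu)}\rho_1(th)$ and $G(\nu):=\sum_{h\in\cch(\nu)}\rho_2(th)$, so that $\sum_\nu F(\nu)=f_t(xq^t)$ and $\sum_\nu F(\nu)G(\nu)=g_t(xq^t)$, the left-hand side factorises as
\begin{equation*}
\left(\sum_{\omega\in\ccp_{(t)}}q^{|\omega|}\right)\sum_{\underline{\nu}\in\ccp^t}\prod_{i=0}^{t-1}F(\nu^{(i)})\cdot\sum_{j=0}^{t-1}G(\nu^{(j)}).
\end{equation*}

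For the $t$-core factor I would invoke the classical identity $\sum_{\omega\in\ccp_{(t)}}q^{|\omega|}=(q^t;q^t)^t_\infty/(q;q)_\infty$, itself a direct consequence of plugging the Littlewood bijection into~\eqref{gspartitions}. For the $\underline{\nu}$-sum I would swap the outer finite sum over $j$ with the sum over $\underline{\nu}$; for each fixed $j$ the summand $\prod_iF(\nu^{(i)})\cdot G(\nu^{(j)})$ is a product over the $t$ independent components, so its $\underline{\nu}$-sum factorises as $(f_t(xq^t))^{t-1}\,g_t(xq^t)$. Since the $t$ contributions (one per $j$) are equal by symmetry, summing them produces the prefactor~$t$ and the claimed right-hand side.

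The real content is therefore not in this bookkeeping but in establishing the multiset identification $\cch_t(\lambda)=t\cdot\bigsqcup_i\cch(\nu^{(i)})$; this is the only place where genuine combinatorial input beyond generating-series manipulation is required, and it is precisely the delicate statement that the paper will later have to refine in order to track conjugation symmetry and the BG-rank in the self-conjugate setting. I would therefore expect the bulk of the preparation to go into a careful review of this property of the Littlewood decomposition, namely the content of Section~\ref{lit}.
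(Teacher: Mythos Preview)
Your proposal is correct and matches the standard approach. The paper does not actually prove Theorem~\ref{Multiplication theorem} itself, since it is quoted from~\cite{HJ}; however, your argument is precisely the strategy the paper employs in Section~\ref{sec:mult} to prove the self-conjugate analogue Theorem~\ref{bg4multiSC}: fix the $t$-core, use $(P2)$--$(P3)$ of Proposition~\ref{Littlewood} to rewrite the summand in terms of the quotient $\underline{\nu}$, factor the resulting sum over $\ccp^t$ into $t$ independent pieces (with the $\rho_2$-sum distributed over the $t$ components by symmetry), and finish with the $t$-core generating function~\eqref{gsp}.
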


Note that Walsh and Warnaar in \cite{WW} also prove multiplication theorems giving rise to hook-length formulas. They also prove interesting extensions regarding leg-length.

Theorem \ref{Multiplication theorem} gives modular analogues of many classical formulas. For instance, setting $\rho_1(h)=1-z/h^2$ for any complex number $z$ and $\rho_2(h)=1$, it provides the modular analogue of the Nekrasov--Okounkov formula~\eqref{NOdebut} originally proved in~\cite[Theorem 1.2]{Ha}:
\begin{equation}\label{modularNO}
\sum_{\lambda\in\mathcal{P}}q^{\vert \lambda \vert}x^{\vert \cch_t\left(\lambda\right)\vert}\displaystyle\prod_{h\in\mathcal{H}_t(\lambda)}\left(1-\frac{z}{h^2}\right)=\frac{\left(q^t;q^t\right)_\infty ^t}{\left(xq^t;xq^t\right)_\infty ^{t-z/t}\left(q;q\right)_\infty }.
\end{equation}
%on partition from a characteristic $0$ setting to characteristic $p$.

%\textit{ici mettre formule de N-o modulaire et Okada-Panova??}

In the present work, we extend Theorem~\ref{Multiplication theorem} to an important subset of $\ccp$, namely the self-conjugate partitions, and derive several applications regarding these. Recall that the \textit{conjugate} of $\lambda$, denoted $\lambda'$, is defined by its parts $\lambda_i' = \#\{j, \lambda_j \geq i\}$ for \linebreak$1\leq i \leq \ell(\lambda)$. For instance in Figure~\ref{fig:fig1}, the conjugate of $\lambda=(4,3,3,2)$ is \linebreak$\lambda'=(4,4,3,1)$. A partition $\lambda$ is said to be \textit{self-conjugate} if it satisfies $\lambda=\lambda'$. 

We denote the set of self-conjugate partitions by $\mathcal{SC}$. This subset of partitions has been of particular interest within the works of P\'etr\'eolle \cite{MP,Pe} where two Nekrasov--Okounkov type formulas for $\tilde{C}$ and $\tilde{C}$\v{ } are derived. See also the work of Han--Xiong \cite{HX} or Cho--Huh--Sohn \cite{CHS}. The already mentioned Littlewood decomposition, when restricted to $\ccsc$, also has interesting properties and can be stated as follows (see for instance \cite{GKS,MP}):
$$\begin{array}{rcll}
\lambda\in\ccsc &\mapsto &\left(\omega,\underline{\tilde{\nu}}\right)\in\ccsc_{(t)}\times\ccp^{t/2}\quad&\text{if $t$ even,}\\
\lambda\in\ccsc &\mapsto &\left(\omega,\underline{\tilde{\nu}},\mu\right)\in\ccsc_{(t)}\times\ccp^{(t-1)/2}\times\ccsc\quad&\text{if $t$ odd.}\end{array}$$
Indeed, as will be detailed in Section~\ref{lit}, in the particular case of self-conjugate partitions, elements of the $t$-quotient $\underline{\nu}\in\ccp^t$ can be gathered two by two through conjugation (except $\nu^{((t-1)/2)}$ when $t$ is odd), therefore yielding the above vectors $\underline{\tilde{\nu}}$ and $(\underline{\tilde{\nu}},\mu)$.

As can be seen above, to provide an analogue of Theorem~\ref{Multiplication theorem} for self-conjugate partitions, the $t$ even case is simpler to handle, therefore we first restrict ourselves to this setting. Nevertheless, it yields a slightly more general result than Theorem~\ref{Multiplication theorem}, as the BG-rank can be incorporated. 
\begin{theorem}\label{bg4multiSC}
Let $t$ be a positive even integer and set $\rho_1,\rho_2$ two functions defined on $\mathbb{N}$. Let $f_{t}$ and $g_{t}$ be the formal power series defined as:
\begin{align*}
f_{t}(q)&:=\sum_{\nu\in\mathcal{P}}q^{|\nu|}\prod_{h\in\mathcal{H}(\nu)}\rho_1(th)^2,\\
g_{t}(q)&:=\sum_{\nu\in\mathcal{P}}q^{|\nu|}\prod_{h\in\mathcal{H}(\nu)}\rho_1(th)^2\sum_{h\in\mathcal{H}(\nu)}\rho_2(th).
\end{align*}
Then we have
\begin{multline*}
\sum_{\lambda\in \ccsc}q^{|\lambda|}x^{|\mathcal{H}_{t}(\lambda)|}b^{\BGP(\lambda)}\prod_{h\in\mathcal{H}_{t}(\lambda)}\rho_1(h)\sum_{h\in\mathcal{H}_{t}(\lambda)}\rho_2(h)\\
=t\left(f_{t}(x^2q^{2t})\right)^{t/2-1}g_{t}(x^2q^{2t})
\left(q^{2t};q^{2t}\right)_{\infty}^{t/2}\left(-bq;q^4\right)_{\infty}\left(-q^3/b;q^4\right)_{\infty}.
\end{multline*}
\end{theorem}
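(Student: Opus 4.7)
The plan is to apply the Littlewood decomposition for self-conjugate partitions recalled above: for $t$ even, $\lambda \in \ccsc$ corresponds bijectively to a pair $(\omega, \underline{\tilde{\nu}}) \in \ccsc_{(t)} \times \ccp^{t/2}$, where the $t/2$ components $\tilde{\nu}^{(i)}$ parametrise the $t$-quotient of $\lambda$ after identifying the pairs $\nu^{(i)} \leftrightarrow (\nu^{(t-1-i)})'$ forced by self-conjugacy. The strategy is to split the left-hand side along this bijection into a product of a sum over the $t$-core and a sum over the quotient.

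First I would translate each statistic under the bijection. Standard properties of the Littlewood decomposition yield $|\lambda| = |\omega| + 2t\,|\underline{\tilde{\nu}}|$ with $|\underline{\tilde{\nu}}| = \sum_i |\tilde{\nu}^{(i)}|$, together with the multiset identity that $\cch_t(\lambda)$ consists of the hooks $\{th : h \in \cch(\tilde{\nu}^{(i)})\}$ for $0 \le i \le t/2-1$, each appearing with multiplicity two (using that a partition and its conjugate share the same hook-length multiset); in particular $|\cch_t(\lambda)| = 2|\underline{\tilde{\nu}}|$. For the BG-rank, since $t$ is even any $t$-rim hook contains equally many cells of each colour in the checkerboard of Figure~\ref{fig:bg}, so iterated $t$-hook removal preserves BG-rank and therefore $\BGP(\lambda) = \BGP(\omega)$. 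Substituting these identities, the quotient contribution factorises by distributing the sum of $\rho_2(th)$ over the $t/2$ components: this produces $2\cdot(t/2)$ identical summands of the form $f_t(x^2q^{2t})^{t/2-1}\,g_t(x^2q^{2t})$, whose total is exactly $t\,f_t(x^2q^{2t})^{t/2-1}\,g_t(x^2q^{2t})$.

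The main obstacle is then the core generating function
\begin{equation*}
\sum_{\omega \in \ccsc_{(t)}} q^{|\omega|}\,b^{\BGP(\omega)} = (q^{2t};q^{2t})_\infty^{t/2}\,(-bq;q^4)_\infty\,(-q^3/b;q^4)_\infty.
\end{equation*}
By Jacobi's triple product applied with base $q^4$ and parameter $z = bq$, the right-hand side rewrites as $\frac{(q^{2t};q^{2t})_\infty^{t/2}}{(q^4;q^4)_\infty} \sum_{n \in \bbbz} b^n\,q^{2n^2 - n}$, so the task reduces to parametrising $\ccsc_{(t)}$ by one distinguished integer $n \in \bbbz$ (which must coincide with the BG-rank) together with $t/2-1$ auxiliary parameters, in such a way that $|\omega|$ splits as $2n^2 - n$ plus a quadratic form in the remaining parameters whose generating series is $(q^{2t};q^{2t})_\infty^{t/2}/(q^4;q^4)_\infty$. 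The natural tool is the $t$-abacus: self-conjugate $t$-cores correspond to symmetric bead configurations, and the delicate point is to read off both $|\omega|$ and $\BGP(\omega)$ explicitly from the bead positions and to verify that $\BGP$ corresponds to precisely the right coordinate on the abacus. Once this identification is in place, multiplying the core factor with the quotient factor computed above yields the theorem.
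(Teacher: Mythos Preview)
Your approach matches the paper's proof in overall structure: Littlewood decomposition, the observation $\BGP(\lambda)=\BGP(\omega)$ via even rim-hook removal, the pairing $\nu^{(i)}\leftrightarrow(\nu^{(t-1-i)})'$ to factor the quotient contribution as $t\,f_t(x^2q^{2t})^{t/2-1}g_t(x^2q^{2t})$, and reduction of the core sum via the Jacobi triple product. The only divergence is in the step you correctly flag as the ``delicate point'': the paper does \emph{not} work out the abacus parametrisation directly but instead invokes a bijection of Cho--Huh--Sohn \cite[Corollary~4.6, Proposition~4.7]{CHS} which, for each value of the parameter $m$ (equivalently, each BG-rank $j$), maps $\ccsc_{(t)}^{(m)}$ to $\ccp_{(t/2)}$ with $|\omega|=4|\kappa|+j(2j-1)$; combining this with the known generating function \eqref{gsp} for $t/2$-cores yields the core identity immediately. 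Your proposed direct abacus computation would amount to reproving this Cho--Huh--Sohn result (or the underlying Garvan--Kim--Stanton parametrisation of $\ccsc_{(t)}$), which is certainly feasible and more self-contained, but you should be aware that the paper takes the shortcut of citing it.
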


\begin{remark}  Note that the functions $f_t$ and $g_t$ in Theorem \ref{bg4multiSC} are close to the ones in Theorem \ref{Multiplication theorem}, the explanation is that when $t$ is even, there is no additional self-conjugate partition $\mu$ in the Littlewood decomposition.
%Setting $\rho_1'(h)=\rho_1^2(2h)$ and $\rho_2'(h)=\rho_2(2h)$, note that we can write:
%\begin{align*}
%f_t(q)&=\sum_{\nu\in\mathcal{P}}q^{|\nu|}\prod_{h\in\mathcal{H}(\nu)}\rho_1'\left(\frac{t}{2}h\right),\\
%g_{t}(q)&=\sum_{\nu\in\mathcal{P}}q^{|\nu|}\prod_{h\in\mathcal{H}(\nu)}\rho_1'\left(\frac{t}{2}h\right)\sum_{h\in\mathcal{H}(\nu)}\rho_2'\left(\frac{t}{2} h\right).
%\end{align*}
%These functions correspond to $f_{t/2}$ and $g_{t/2}$ in Theorem \ref{Multiplication theorem}.
%%Note that the right hand side of Theorem \ref{bg4multiSC} can be written as follows setting $\rho_1'(h)=\sqrt{\rho_1(2h)}$:
%%$$
%%t\left(f_{t}(x^2q^{2t})\right)^{t/2-1}g_{t}(x^2q^{2t})
%%\quad\frac{\left(q^{2t};q^{2t}\right)_{\infty}^{t/2}}{\left(q^4;q^4\right)_{\infty}}\sum_{j=-\infty}^{+\infty}b^jq^{j(2j-1)}=
%%$$
%Then the right hand side from Theorem \ref{bg4multiSC} can be rewritten as :
%$$
%%t\left(f_{t}(x^2q^{2t})\right)^{t/2-1}g_{t}(x^2q^{2t})
%RHS=\frac{\left(q^2;q^2\right)_{\infty}\left(q^{2t};q^{2t}\right)_{\infty}^{t/2}}{\left(q^{t};q^{t}\right)^{t/2}_{\infty}\left(q^4;q^4\right)_{\infty}}\sum_{j=-\infty}^{+\infty}b^jq^{j(2j-1)}\displaystyle\sum_{\lambda\in\mathcal{P}}q^{2|\lambda|}x^{2|\mathcal{H}_{t/2}(\lambda)|}\displaystyle\prod_{h\in\mathcal{H}_{t/2}(\lambda)}\rho'_1(h)\sum_{h\in\mathcal{H}_{t/2}(\lambda)}\rho'_2(h)
%$$
%Then this Theorem implies Theorem \ref{Multiplication theorem}: all of the hook formulas on $\ccp$ have their $\ccsc$ modular version.
\end{remark}
We will derive several consequences of this result, including a new trivariate generating function for $\ccsc$, new hook-length formulas, new modular versions of the Han--Carde--Loubert--Potechin--Sanborn, the Nekrasov--Okounkov, the Bessenrodt--Bacher--Manivel, the Okada--Panova, and the Stanley--Panova formulas. Among them, we highlight here the self-conjugate version of~\eqref{modularNO}.
\begin{corollary}\label{NOsign}
For any complex number $z$ and $t$ an even positive integer, we have:
\begin{multline*}
\sum_{\lambda\in\ccsc}q^{|\lambda|}x^{|\mathcal{H}_t(\lambda)|}b^{\BGP(\lambda)}\prod_{h\in\cch_t(\lambda)}\left(1-\frac{z}{h^2}\right)^{1/2}\\=\left(x^{2}q^{2t};x^{2}q^{2t}\right)_\infty^{(z/t-t)/2}\left(q^{2t};q^{2t}\right)_{\infty}^{t/2}\left(-bq;q^4\right)_{\infty}\left(-q^3/b;q^4\right)_{\infty}.
\end{multline*}
\end{corollary}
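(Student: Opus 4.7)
The plan is to specialise Theorem~\ref{bg4multiSC} (in its purely multiplicative form, obtained by dropping the aggregation $\sum\rho_2$) to the choice $\rho_1(h)=(1-z/h^2)^{1/2}$, so that $\rho_1(h)^2=1-z/h^2$, and then to evaluate the auxiliary series $f_t$ via the original Nekrasov--Okounkov formula~\eqref{NOdebut}. Since the left-hand side of Corollary~\ref{NOsign} carries no additive summand, the version of Theorem~\ref{bg4multiSC} that I would actually apply reads
\begin{equation*}
\sum_{\lambda\in\ccsc}q^{|\lambda|}x^{|\cch_t(\lambda)|}b^{\BGP(\lambda)}\prod_{h\in\cch_t(\lambda)}\rho_1(h)=\bigl(f_t(x^2q^{2t})\bigr)^{t/2}(q^{2t};q^{2t})_\infty^{t/2}(-bq;q^4)_\infty(-q^3/b;q^4)_\infty.
\end{equation*}
This identity is essentially what the self-conjugate Littlewood decomposition $\lambda\mapsto(\omega,\underline{\tilde\nu})\in\ccsc_{(t)}\times\ccp^{t/2}$ produces directly: the product $\prod\rho_1(h)$ splits coordinate-wise as $\prod_{i=1}^{t/2}\prod_{h\in\cch(\tilde\nu^{(i)})}\rho_1(th)^2$ (the square encoding the conjugate pairing of quotient components), the weights $q^{|\lambda|}x^{|\cch_t(\lambda)|}$ decompose as $q^{|\omega|}\prod_i(x^2q^{2t})^{|\tilde\nu^{(i)}|}$, and the BG-rank depends only on~$\omega$; the three factors coming from the self-conjugate $t$-core are exactly those appearing in Theorem~\ref{bg4multiSC}, while the prefactor $t$ and the drop in exponent from $t/2$ to $t/2-1$ there are artefacts of distributing the additive term $\sum\rho_2$ across coordinates and disappear in the purely multiplicative setting.

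With this identity in hand, evaluating $f_t$ is routine: each hook-length factor rewrites as $1-(z/t^2)/h^2$, so applying \eqref{NOdebut} with $z$ replaced by $z/t^2$ gives
\begin{equation*}
f_t(q)=(q;q)_\infty^{z/t^2-1},
\end{equation*}
so that
\begin{equation*}
\bigl(f_t(x^2q^{2t})\bigr)^{t/2}=(x^2q^{2t};x^2q^{2t})_\infty^{(t/2)(z/t^2-1)}=(x^2q^{2t};x^2q^{2t})_\infty^{(z/t-t)/2}.
\end{equation*}
Combining this with the self-conjugate $t$-core factor yields precisely the right-hand side of Corollary~\ref{NOsign}.

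The only non-trivial step is recognising and justifying the multiplicative form of Theorem~\ref{bg4multiSC} above; once that is accepted, the corollary is a one-line substitution followed by standard $q$-Pochhammer bookkeeping. No additional combinatorial work beyond the Littlewood decomposition is needed.
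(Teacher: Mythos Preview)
Your proposal is correct and follows essentially the same approach as the paper: the paper states the purely multiplicative version you describe as Corollary~\ref{multiSC} (obtained from Theorem~\ref{bg4multiSC} by taking $\rho_2=1$ and integrating), then specialises $\rho_1(h)=(1-z/h^2)^{1/2}$ and evaluates $f_t$ via~\eqref{NOdebut} exactly as you do. Your direct justification of the multiplicative identity from the Littlewood decomposition is precisely the content of the proof of Theorem~\ref{bg4multiSC} restricted to the product part, so there is no substantive difference.
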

As some combinatorial signs naturally appear in the work of P\'etr\'eolle regarding Nekrasov--Okounkov type formulas for self-conjugate partitions, we will also prove a signed refinement of Theorem~\ref{bg4multiSC} (see Theorem~\ref{signmultisc} in Section~\ref{refine}, which actually generalizes Theorem~\ref{bg4multiSC}).

It is also possible to prove a result similar to Theorem~\ref{bg4multiSC} when $t$ is odd; nevertheless more difficulties arise due to the additional $\mu\in\ccsc$ appearing in the Littlewood decomposition. However, as will be seen later, the subset of $\ccsc$ for which $\mu$ is empty, can be handled almost similarly as for Theorem~\ref{bg4multiSC} (see Theorem~\ref{thm:bgt} in Section~\ref{sec:odd}). The interesting thing here is that this subset of $\ccsc$ actually corresponds to partitions called $\BGP_t$ in~\cite{Bern}, which are algebraically involved in representation theory of the symmetric group over a field of characteristic $t$ when $t$ is an odd prime number. \\

This paper is organized as follows. In Section~\ref{lit}, we provide the necessary background and properties regarding the Littlewood decomposition for self-conjugate partitions. Section~\ref{sec:mult} is devoted to the proof of Theorem~\ref{bg4multiSC}, together with some useful special cases. Many interesting modular self-conjugate analogues of the above mentioned classical formulas are then listed and proved in Section~\ref{applications}. In Section~\ref{refine}, our signed generalization of Theorem~\ref{bg4multiSC} is proved, and finally in Section~\ref{sec:odd} we study the odd case.

%%%%%%%%%%%%%%%%%%%%%%%%%%%%%%%%%%%%%%%%%%%%%%%%%
\section{Combinatorial properties of the Littlewood decomposition on self-conjugate partitions} \label{lit}
%%%%%%%%%%%%%%%%%%%%%%%%%%%%%%%%%%%%%%%%%%%%%%%%%

In this section, we use the formalism of Han and Ji in \cite{HJ}. Recall that a partition $\mu$ is a $t$-core if it has no hook that is a multiple of $t$. For any $A\subset\ccp$, we denote by $A_{(t)}$ the subset of elements of $A$ that are $t$-cores. For example, the only $2$-cores are the ``staircase" partitions $(k,k-1,\dots,1)$, for any positive integer $k$, which are also the only $\ccsc$ $2$-cores.

Let $\partial \lambda$ be the border of the Ferrers diagram of $\lambda$. Each step on $\partial\lambda$ is either horizontal or vertical. Encode the walk along the border from the South-West to the North-East as depicted in Figure \ref{fig:word}: take ``$0$" for a vertical step and ``$1$" for a horizontal step. This yields a $0/1$ sequence denoted \textbf{$s(\lambda)$}. The resulting word $s(\lambda)$ over the $\lbrace 0,1\rbrace$ alphabet:
\begin{itemize}
\item  contains infinitely many ``$0$"'s (respectively ``$1$"'s) at the beginning (respectively the end),
\item is indexed by $\bbbz$,
\item  and is written $(c_i)_{i\in\mathbb{Z}}$.  
\end{itemize}
 %It can easily be seen that if for instance we add "$0$" at the beginning of the word or "$1$"'s at the end of the word, we obtain the same partition.
This writing as a sequence is not unique since for any $k$, sequences $(c_{k+i})_{i\in\mathbb{Z}}$ encode the same partition. Hence it is necessary for that encoding to be bijective to set the index $0$ uniquely. To tackle that issue, we set the index $0$ when the number of ``$0$"'s on and to the right of that index is equal to the number of ``$1$"'s to the left. In other words, the number of horizontal steps along $\partial\lambda$ corresponding to a ``$1$" of negative index in $(c_i)_{i\in\bbbz}$ must be equal to the number of vertical steps corresponding to ``$0$"'s of nonnegative index in $(c_i)_{i\in\bbbz}$ along $\partial\lambda$. The delimitation between the letter of index $-1$ and the one of index $0$ is called the \textit{median} of the word, marked by a $\mid$ symbol. The size of the Durfee square is then equal to the number of ``$1$"'s of negative index. Hence a partition is bijectively associated by the application $s$ to the word:
\begin{align*}
s(\lambda)=(c_i)_{i\in\mathbb{Z}}=\left(\ldots c_{-2}c_{-1}|c_0c_1c_2\ldots\right), \intertext{where $c_i\in\lbrace 0,1\rbrace$ for any $i\in\bbbz$, and such that}
\#\{i\leq-1,c_i=1\}= \#\{i\geq0,c_i=0\}.
\end{align*}

Moreover, this application maps bijectively a box $u$ of hook-length $h_u$ of the Ferrers diagram of $\lambda$ to a pair of indices $(i_u,j_u)\in\bbbz^2$ of the word $s(\lambda)$ such that
\begin{itemize}
\item  $i_u<j_u$,
\item $c_{i_u}=1$, $c_{j_u}=0$
\item $j_u-i_u=h_u$.
\end{itemize}  
The following lemma will be useful in Section \ref{refine}.
\begin{lemma}\label{lemdurf}
Set $\lambda\in\ccp$ and $s(\lambda)$ its corresponding word. Let $u$ be a box of the Ferrers diagram of $\lambda$. Let $(i_u,j_u)\in\bbbz^2$ be the indices in $s(\lambda)$ associated with $u$. Then $u$ is a box strictly above the main diagonal in the Ferrers diagram of $\lambda$ if and only if $|i_u|\leq |j_u|$.
\end{lemma}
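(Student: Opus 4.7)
The plan is to reformulate both conditions in the lemma in terms of the sequence of beta-numbers $p_r := \lambda_r - r$, and then observe that they coincide.

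First, I would pin down the positions of $i_u$ and $j_u$ within the word $s(\lambda)$. By following the encoding of $\partial\lambda$ and the balance condition defining the median, one shows that for a box $u$ at row $r$ and column $c$ of $\lambda$, the vertical step coding row $r$ sits at $j_u = \lambda_r - r = p_r$, while the horizontal step coding column $c$ sits at $i_u = c - \lambda'_c - 1$. One then verifies $j_u - i_u = \lambda_r + \lambda'_c - r - c + 1 = h_u$, together with $c_{i_u}=1$, $c_{j_u}=0$, $i_u<j_u$, as required. In the self-conjugate context of this section, $\lambda'_c = \lambda_c$, and hence equivalently $i_u = -p_c - 1$.

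Next I would record two elementary facts. (a) Since $p_r - p_{r+1} = \lambda_r - \lambda_{r+1} + 1 \geq 1$, the sequence $(p_r)_{r \geq 1}$ is strictly decreasing, so for any two indices one has $r < c$ iff $p_r > p_c$. (b) Using $i_u < j_u$, the factorization $|j_u|^2 - |i_u|^2 = (j_u - i_u)(j_u + i_u)$ together with $j_u - i_u = h_u > 0$ shows that $|i_u| \leq |j_u|$ is equivalent to $j_u + i_u \geq 0$. Combining the two points, $j_u + i_u = p_r + (-p_c - 1) = p_r - p_c - 1$, so $|i_u|\leq|j_u|$ holds iff $p_r - p_c \geq 1$ iff $p_r > p_c$ iff $r < c$, which is precisely the condition that $u$ lies strictly above the main diagonal.

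The main obstacle lies in the first step: giving a clean derivation of the positional formulas $j_u = \lambda_r - r$ and $i_u = c - \lambda'_c - 1$ directly from the walk-plus-median definition of $s(\lambda)$, which requires a careful accounting of how many $1$'s precede each step in the median-centered word. Once these formulas are in hand, the remainder is the short algebraic identity above combined with the strict monotonicity of $(p_r)$, and self-conjugacy enters only through the reduction $i_u = -p_c - 1$.
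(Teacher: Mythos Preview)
Your argument is correct for self-conjugate $\lambda$ and takes a genuinely different route from the paper. The paper argues by case analysis on the signs of $i_u,j_u$, interpreting each case geometrically relative to the Durfee square and its corner. You instead pin down the explicit formulas $j_u=\lambda_r-r$ and $i_u=c-\lambda'_c-1$, reduce $|i_u|\le|j_u|$ to $i_u+j_u\ge0$ via the factorization $|j_u|^2-|i_u|^2=(j_u-i_u)(j_u+i_u)$ with $j_u-i_u=h_u>0$, and finish by strict monotonicity of the sequence $p_r=\lambda_r-r$. This is cleaner, avoids the case split, and makes the role of the condition $r<c$ transparent.

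There is one gap to flag. The lemma is stated for arbitrary $\lambda\in\ccp$, but you invoke $\lambda'_c=\lambda_c$ to write $i_u=-p_c-1$; without self-conjugacy one has $i_u=-q_c-1$ with $q_c:=\lambda'_c-c$, and your final step would have to read ``$p_r>q_c$ iff $r<c$'', which is no longer a consequence of the monotonicity of a single sequence. In fact the statement as written is already delicate on the diagonal for general partitions: for $\lambda=(3,1)$ and the diagonal box $u=(1,1)$ one computes $i_u=-2$, $j_u=2$, so $|i_u|\le|j_u|$ holds while $u$ is on, not strictly above, the diagonal. Since the lemma is only ever applied in the paper to $\lambda\in\ccsc$, your restriction is exactly what is needed there; and for $\lambda\in\ccsc$ every diagonal box satisfies $i_u=-j_u-1$, hence $|i_u|=|j_u|+1>|j_u|$, so the ambiguity disappears and your equivalence goes through.
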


\begin{proof}
Let $u$ be a box and $(i,j)\in\bbbz^2$ the corresponding indices in $s(\lambda)=(c_k)_{k\in\bbbz}$ such that $c_{i_u}=1$ and $c_{j_u}=0$. Assume that $i_u$ and $j_u$ have the same sign. This is equivalent to the fact that the hook defined by the sequence $c_{i_u}\dots c_{j_u}$ begins and ends on the same side of the median of $s(\lambda)$.

Then the box $u$ associated with this hook is either below the Durfee square or to its right. Hence $u$ is below when $i_u$ and $j_u$ are negative as we also know that $i_u<j_u$, then $|j_u|<|i_u|$. If $u$ is to the right of the Durfee square, which is above the main diagonal of the Ferrers diagram, then both $i_u$ and $j_u$ are nonnegative. This implies that $|j_u|>|i_u|$.

Now, if we consider the case $i_u<0\leq j_u$, the box $u$ is in the Durfee square. The sequences $c_{i_u}\dots c_{-1}$ of length $|i_u|$ and $c_0\dots c_{j_u}$ of length $j_u+1$ correspond to the number of steps before, respectively after, the corner of the Durfee square. Moreover $u$ is below the main diagonal if and only if the number of steps before the Durfee square is greater or equal to the number of steps after. Hence it is equivalent to $|i_u|\geq |j_u|+1$.
\end{proof}
\begin{figure}[h!]
\centering
\begin{tikzpicture}
    [%%%%%%%%%%%%%%%%%%%%%%%%%%%%%%
        dot/.style={circle,draw=black, fill,inner sep=1pt},
    ]%%%%%%%%%%%%%%%%%%%%%%%%%%%%%%

\foreach \x in {0,...,2}{
    \node[dot] at (\x,-4){ };
}

\foreach \x in {.2,1.2}
    \draw[->,thick,orange] (\x,-4) -- (\x+.6,-4);
\foreach \x in {3.2,4.2}
    \draw[->,thick,orange] (\x,-2) -- (\x+.6,-2);
\foreach \y in {1.8,.8}
    \draw[->,thick,orange] (5,-\y) -- (5,-\y+.6);
\draw[->,thick,orange] (2,-3.8) -- (2,-3.8+.6);
\draw[->,thick,orange] (3,-2.8) -- (3,-2.8+.6);
\draw[->,thick,orange] (2.2,-3) -- (2.2+.6,-3);

\node[dot] at (2,-3){};
\foreach \x in {3,...,5}
    \node[dot] at (\x,-2){};
\foreach \x in {1,...,4}
    \draw (\x,-.1) -- node[above,xshift=-0.4cm,yshift=1mm] {$\lambda_\x'$} (\x,+.1);

\node[above,xshift=0.5cm,yshift=1mm] at (4,0) {$\lambda_5'$};
\node[above,xshift=0.5cm,yshift=1mm] at (5,0) {NE};
\node[above,xshift=-4mm,yshift=1mm] at (0,0) {NW};

\foreach \y in {1,...,3}
    \draw (.1,-\y) -- node[above,xshift=-4mm,yshift=0.2cm] {$\lambda_\y$} (-.1,-\y);
\node[above,xshift=-4mm,yshift=2mm] at (0,-4) {$\lambda_4$};
\node[above,xshift=-4mm] at (0,-5) {SW};
\node at (0,-4.5) {0};  
\node at (0,-5.5) {0};
\node at (2,-3.5) {0};
\node at (3,-2.5) {0};
\node at (5,-1.5) {0};
\node at (5,-0.5) {0};

\node at (0.5,-4) {1};
\node at (1.5,-4) {1};   
\node at (4.5,-2) {1};
\node at (3.5,-2) {1};   
\node at (5.5,-0) {1};
\node at (2.5,-3) {1};

\node[dot] at (5,-1){};  
\node[dot] at (5,0){};
\draw[->,thick,-latex] (0,-6) -- (0,-5);
\draw[thick] (0,-6) -- (0,1);
\draw[->,thick,-latex] (-1,0) -- (6,0);
\node[circle,draw=blue,fill=blue,inner sep=0pt,minimum size=5pt] at (3,-3){};

\end{tikzpicture}
\caption{$\partial\lambda$ and its binary correspondence for $\lambda=(5,5,3,2)$.}
\label{fig:word}
\end{figure}
Now we recall the following classical map, often called the Littlewood decomposition (see for instance \cite{GKS,HJ}).

\begin{definition}\label{defphi}{\em
 Let $t \geq 2$ be an integer and consider:\\
$$\begin{array}{l|rcl}
\Phi_t: & \mathcal{P} & \to & \mathcal{P}_{(t)} \times \mathcal{P}^t \\
& \lambda & \mapsto & (\omega,\nu^{(0)},\ldots,\nu^{(t-1)}),
\end{array}$$
where if we set $s(\lambda)=\left(c_i\right)_{i\in\bbbz}$, then for all $k\in\lbrace 0,\dots,t-1\rbrace$, one has \linebreak $\nu^{(k)}:=s^{-1}\left(\left(c_{ti+k}\right)_{i\in\bbbz}\right)$. The tuple $\underline{\nu}=\left(\nu^{(0)},\ldots,\nu^{(t-1)}\right)$ is called the $t$-quotient of $\lambda$ and is denoted by \textit{$quot_t(\lambda)$}, while $\omega$ is the $t$-core of $\lambda$ denoted by \textit{$core_t(\lambda)$}.}
\end{definition}
Obtaining the $t$-quotient is straightforward from $s(\lambda)=\left(c_i\right)_{i\in\bbbz}$: we just look at subwords with indices congruent to the same values modulo $t$. The sequence $10$ within these subwords are replaced iteratively by $01$ until the subwords are all the infinite sequence of ``$0$"'s before the infinite sequence of ``$1$"'s (in fact it consists in removing all rim hooks in $\lambda$ of length congruent to $0\pmod t$). Then $\omega$ is the partition corresponding to the word which has the subwords$\pmod t$ obtained after the removal of the $10$ sequences.
For example, if we take $\lambda = (4,4,3,2) \text{ and } t=3$, then $s(\lambda)=\ldots \color{red}{0} \color{blue}{0} \color{green}{1} \color{red}{1} \color{blue}{0}\color{green}{1} \color{black}| \color{red}{0} \color{blue}{1} \color{green}{0} \color{red}{0} \color{blue}{1} \color{green}{1}\color{black}\ldots$
\begin{align*}
\begin{array}{rc|rcl}
s\left(\nu^{(0)}\right)=\ldots \color{red} 001 \color{black}| \color{red}001\color{black}\ldots& &s\left(w_{0}\right)=\ldots \color{red} 000 \color{black}| \color{red}011\color{black}\ldots,\\
 s\left(\nu^{(1)}\right)=\ldots \color{blue} 000 \color{black}| \color{blue}111\color{black}\ldots& \longmapsto& s\left(w_{1}\right)=\ldots \color{blue} 000 \color{black}| \color{blue}111\color{black}\ldots , \\
 s\left(\nu^{(2)}\right)=\ldots \color{green} 011 \color{black}| \color{green}011\color{black}\ldots & & s\left(w_{2}\right)=\ldots \color{green} 001 \color{black}| \color{green}111\color{black}\ldots .
\end{array}\\
\end{align*}
Thus
\begin{center}
$s(\omega)=\ldots \color{red}{0} \color{blue}{0} \color{green}{0} \color{red}{0} \color{blue}{0}\color{green}{1} \color{black}| \color{red}{0} \color{blue}{1} \color{green}{1} \color{red}{1} \color{blue}{1} \color{green}{1}\color{black}\ldots $
\end{center}
and
$$
quot_3(\lambda)=\left(\nu^{(0)},\nu^{(1)},\nu^{(2)}\right)=\left((1,1),\emptyset,(2)\right),\ core_3(\lambda)= \omega=(1)%=\quad \YFrench \yng(1).
$$

The following properties of the Littlewood decomposition are given in \cite{HJ}.

\begin{proposition}\cite[Theorem 2.1]{HJ}
\label{Littlewood} Let $t$ be a positive integer. The Littlewood decomposition $\Phi_t$ maps bijectively a partition $\lambda$ to $\left(\omega,\nu^{(0)},\dots,\nu^{(t-1)}\right)$ such that:
\begin{align*}
&(P1)\quad \omega \text{ is a $t$-core and }\nu^{(0)},\dots,\nu^{(t-1)} \text{are partitions},\\
&(P2) \quad |\lambda|=|\omega|+t\sum_{i=0}^{t-1} |\nu^{(i)}|,\\
&(P3)\quad \mathcal{H}_t(\lambda)=t\mathcal{H}(\underline{\nu}),
\intertext{where, for a multiset $S$,}
&\quad tS:=\{ts,s\in S\}\quad\text{and}\quad \mathcal{H}(\underline{\nu}):=\bigcup\limits_{i=0}^{t	-1}\cch(\nu^{(i)}).\notag
\end{align*}
\end{proposition}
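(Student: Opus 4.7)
The plan is to prove bijectivity together with the three properties using the correspondence, recalled before the proposition, between hooks of $\lambda$ and pairs $(i,j) \in \bbbz^2$ satisfying $c_i = 1$, $c_j = 0$, $i < j$ in the sequence $s(\lambda) = (c_i)_{i \in \bbbz}$, under which the hook length equals $j - i$.

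The first step is to establish (P3), as it captures the heart of the decomposition. Writing $i = ti' + k$ and $j = tj' + k$ for some residue $k \in \{0, \ldots, t-1\}$, the congruence $t \mid (j-i)$ is equivalent to $i \equiv j \pmod{t}$, so both indices belong to the same subsequence $(c_{tn + k})_{n \in \bbbz}$. After renormalizing this subsequence so its own median sits at the prescribed spot (a shift which does not alter the encoded partition), the pair $(i',j')$ becomes a pair in $s(\nu^{(k)})$ with value $1$ then $0$, hence corresponds to a hook of $\nu^{(k)}$ of length $j' - i' = (j-i)/t$. This yields a bijection between $\mathcal{H}_t(\lambda)$ and $\bigcup_{k=0}^{t-1}\mathcal{H}(\nu^{(k)})$ scaling all hook lengths by $t$, which is exactly (P3).

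Property (P1) then follows quickly: $\omega$ is encoded by the word obtained by sorting each subsequence $(c_{tn+k})_n$ so that every $0$ precedes every $1$, which destroys every pair of indices in the same residue class mod $t$ with $c_i = 1$, $c_j = 0$, $i < j$; hence $\omega$ has no hook divisible by $t$, i.e., $\omega \in \mathcal{P}_{(t)}$, and each $\nu^{(k)}$ is visibly a partition once its subsequence is shifted so that the number of $1$'s of negative index equals the number of $0$'s of nonnegative index. For (P2), I would note that sorting each $(c_{tn+k})_n$ via adjacent transpositions of the form $10 \mapsto 01$ corresponds, at the level of $\lambda$, to iteratively stripping rim hooks of length $t$: each such transposition removes exactly one box from $\nu^{(k)}$ (the adjacent swap in the subword decreases its encoded partition by a corner box) and exactly $t$ boxes from $\lambda$. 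Since the total number of such transpositions equals $\sum_{k=0}^{t-1}|\nu^{(k)}|$, we obtain $|\lambda| - |\omega| = t\sum_k|\nu^{(k)}|$.

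Bijectivity is obtained by constructing the inverse explicitly: given $(\omega, \nu^{(0)}, \ldots, \nu^{(t-1)})$, interleave the subsequences in the unique way so that, for each $k$, the residue-$k$ subsequence of the reconstructed word agrees (after the appropriate shift) with $s(\nu^{(k)})$ and sorts to the residue-$k$ subsequence of $s(\omega)$. The main obstacle I anticipate is the careful bookkeeping around this renormalization step: the \emph{charges} of the $t$-quotient dictate how each subsequence must be shifted so that its median lands at the prescribed position. Getting this alignment correct is essential both for each $\nu^{(k)}$ to be a well-defined partition and for the inverse map to be uniquely determined; once it is handled, the three properties drop out immediately from the three observations above.
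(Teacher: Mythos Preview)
The paper does not actually prove this proposition; it is quoted verbatim from \cite[Theorem~2.1]{HJ} and used as a black box throughout. So there is no proof in the paper to compare your attempt against.

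That said, your sketch is the standard argument and is correct in outline. The hook--pair correspondence $u \leftrightarrow (i_u,j_u)$ with $h_u=j_u-i_u$ is exactly what the paper records just before Definition~\ref{defphi}, and your reduction of $(P3)$ to the observation that $t\mid(j-i)$ forces $i$ and $j$ into the same residue class is the right mechanism. Your treatment of $(P2)$ via counting adjacent $10\to 01$ swaps, each removing one box from the relevant $\nu^{(k)}$ and a length-$t$ rim hook from $\lambda$, is also standard and matches the informal description the paper gives after Definition~\ref{defphi}. The one place where you correctly flag a subtlety---the renormalisation of each subsequence so that its median sits at the right spot (equivalently, keeping track of the charges $n_k$ recording how many $1$'s of negative index versus $0$'s of nonnegative index each residue class contributes)---is precisely what encodes the $t$-core $\omega$ and makes the inverse well defined; once those shifts are fixed by $\omega$, the interleaving is forced, so bijectivity follows.
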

and the first part of their Theorem 2.2 which reads as follows:
\begin{proposition}\cite[Theorem 2.2]{HJ}\label{bghj}
When $t=2$, the Littlewood decomposition $\Phi_2$ has the further two properties:
\begin{align*}
&(P4) \quad\BGP(\lambda)=\begin{cases}
\frac{\ell(\omega)+1}{2}\quad \text{if}\quad \BGP(\lambda)>0,\\
-\frac{\ell(\omega)}{2} \quad \text{if}\quad \BGP(\lambda)\leq 0.
\end{cases}
%&(P5)\quad \WLP(\lambda)=\ell\left(\nu^{(0)}\right)+\ell\left(\nu^{(1)}\right).
\end{align*}
\end{proposition}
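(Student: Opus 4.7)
The plan is to prove the stronger statement that $\BGP(\lambda)=\BGP(\omega)$ whenever $\omega$ is the $2$-core of $\lambda$, and then to compute $\BGP(\omega)$ directly for each staircase partition. The key observation is that for $t=2$ the Littlewood decomposition is obtained, as recalled in the excerpt, by iteratively removing $2$-rim-hooks from $\lambda$ until one reaches the $2$-core $\omega$. So it suffices to show that each such removal leaves the BG-rank unchanged.

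First I would verify the invariance step. A rim-hook of length $2$ is a connected skew shape with no $2\times 2$ square, and therefore consists of two boxes that are either horizontally or vertically adjacent: $\{(r,c),(r,c+1)\}$ or $\{(r,c),(r+1,c)\}$. In either case the two boxes contribute $(-1)^{r+c}$ and $(-1)^{r+c+1}$ to the BG-rank, which sum to zero. Removing the rim-hook does not alter the coordinates, and hence the $\pm 1$ labels, of any remaining box, so the total BG-rank is preserved. Iterating along the Littlewood chain yields $\BGP(\lambda)=\BGP(\omega)$.

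Second I would compute $\BGP(\omega)$ explicitly. As recalled at the beginning of Section~\ref{lit}, the only $2$-cores are the staircase partitions $\omega=(k,k-1,\dots,1)$ with $k=\ell(\omega)$, hence
$$\BGP(\omega)=\sum_{i=1}^{k}\sum_{j=1}^{k-i+1}(-1)^{i+j}.$$
The inner sum equals $(-1)^{i+1}$ when $k-i+1$ is odd and $0$ when $k-i+1$ is even. If $k$ is odd, the surviving contributions come from odd $i\in\{1,3,\dots,k\}$ and sum to $(k+1)/2>0$; if $k$ is even, they come from even $i\in\{2,4,\dots,k\}$ and sum to $-k/2\le 0$ (the case $k=0$ giving $0$). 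Combining with the invariance from the first step, this yields both branches of the claim and simultaneously shows that $\BGP(\lambda)>0$ if and only if $\ell(\omega)$ is odd, which justifies the case distinction in the statement.

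There is essentially no obstacle: the whole argument rests on the one-line fact that the two boxes of a domino carry opposite signs, plus an elementary staircase calculation. The only mild care needed is in the parity bookkeeping of the inner sum and in checking that the strict versus non-strict inequalities $>0$ and $\le 0$ match the odd/even dichotomy for $\ell(\omega)$.
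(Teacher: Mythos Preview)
Your argument is correct. The paper itself gives no proof of this proposition: it is quoted verbatim as \cite[Theorem~2.2]{HJ} and used as a black box, so there is no in-paper proof to compare against. Your two-step approach---invariance of the BG-rank under domino removal, followed by the explicit staircase computation---is exactly the standard route (and essentially what Han--Ji do in the cited reference). The only cosmetic remark is that the parity bookkeeping you describe could be shortened by observing that the staircase $(k,k-1,\dots,1)$ is self-conjugate with diagonal hook lengths $2k-1,2k-3,\dots,1$, so Lemma~\ref{BGSC} of the present paper gives $\BGP(\omega)=r-s$ directly once you count how many of these are $\equiv 1\pmod 4$ versus $\equiv 3\pmod 4$; but your double-sum computation is equally valid.
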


Now we discuss the Littlewood decomposition for $\ccsc$ partitions. Let $t$ be a positive integer, take $\lambda\in \ccsc$, and set $s(\lambda)=(c_i)_{i \in\mathbb{Z}}\in \lbrace 0,1\rbrace^\bbbz$ and \linebreak $(\omega,\underline{\nu})=\left(core_t(\lambda),quot_t(\lambda)\right)$. Then we have (see for instance \cite{GKS,Pe}):
\begin{eqnarray}
\lambda\in \ccsc &\iff & \forall i_0 \in \lbrace 0,\ldots,t-1\rbrace,\forall j \in \mathbb{N},c_{i_0+jt}=1-c_{-i_0-jt-1}\notag\\
&\iff & \forall i_0 \in \lbrace 0,\ldots,t-1\rbrace,\forall j \in \mathbb{N},c_{i_0+jt}=1-c_{t-(i_0+1)-t(j-1)}\label{mot}\\
&\iff &\forall i_0 \in \left\lbrace 0,\ldots,t-1 \right\rbrace, \nu^{(i_0)}=\left(\nu^{(t-i_0-1)}\right)'\quad \text{and}\quad \omega \in \ccsc_{(t)}\notag .\end{eqnarray}

Therefore $\lambda$ is uniquely defined if its $t$-core is known as well as the $\left\lfloor t/2\right\rfloor$ first elements of its quotient, which are partitions without any constraints. It implies that if $t$ is even, there is a one-to-one correspondence between a self-conjugate partition and a pair made of one $\ccsc$ $t-$core and $t/2$ generic partitions. If $t$ is odd, the Littlewood decomposition is a one to one correspondence between a self-conjugate partition and a triple made of one $\ccsc$ $t-$core, $(t-1)/2$ generic partitions and a self-conjugate partition $\mu=\nu^{((t-1)/2)}$.
Hence the analogues of the above theorems when applied to self-conjugate partitions are as follows.

\begin{proposition}\cite[Lemma 4.7]{MP}\label{SCLittlewood}
Let $t$ be a positive integer. The Littlewood decomposition $\Phi_t$ maps a self-conjugate partition $\lambda$ to $\left(\omega,\nu^{(0)},\dots,\nu^{(t-1)}\right)=(\omega,\underline{\nu})$ such that:
\begin{align*}
&(SC1)\quad \text{the first component } \omega \text{ is a $\ccsc$ $t$-core and }\nu^{(0)},\dots,\nu^{(t-1)} \text{are partitions},\\
&(SC2) \quad \forall j \in \left\lbrace 0,\dots,\left\lfloor t/2 \right\rfloor-1\right\rbrace, \nu^{(j)}=\left(\nu^{(t-1-j)}\right)',\\
&(SC'2)\quad  \text{if t is odd, } \nu^{\left((t-1)/2\right)}=\left(\nu^{\left((t-1)/2\right)}\right)'=:\mu, \\
&(SC3) \quad |\lambda|=\begin{cases}\displaystyle|\omega|+2t\sum_{i=0}^{t/2-1} \lvert\nu^{(i)}\rvert \quad
 \text{if t is even},\\
\displaystyle|\omega|+2t\sum_{i=0}^{(t-1)/2-1} \lvert\nu^{(i)}\rvert+t\lvert \mu\rvert \quad \text{if t is odd},
\end{cases}\\
&(SC4)\quad \mathcal{H}_t(\lambda)=t\mathcal{H}(\underline{\nu}).
%\intertext{where a set $S$,}
%&\quad tS:=\{ts,s\in S\}\notag
%\intertext{and}
%&\quad \mathcal{H}(\underline{\nu}):=\bigcup\limits_{i=0}^{t/2-1}\left(\cch\left((\nu^{(i)}\right)\cup\cch\left(\nu^{(t-1-i)}\right)\right).\notag
\end{align*}
\end{proposition}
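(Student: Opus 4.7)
The plan is to derive Proposition~\ref{SCLittlewood} by combining the general Littlewood decomposition (Proposition~\ref{Littlewood}) with the characterization \eqref{mot} of self-conjugacy at the level of binary words. The four assertions SC1--SC4 then fall out almost mechanically, so the work is mostly to unpack each one carefully; I do not anticipate a real obstacle.

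First I would establish SC1, SC2, and SC'2 directly from the third equivalence in \eqref{mot}. Property P1 of Proposition~\ref{Littlewood} already gives that $\omega$ is a $t$-core and each $\nu^{(i)}$ is a partition; \eqref{mot} upgrades this by providing in addition that $\omega\in\ccsc_{(t)}$ and that $\nu^{(i_0)}=(\nu^{(t-i_0-1)})'$ for every $i_0\in\{0,\dots,t-1\}$. Renaming $i_0=j$ is SC2. If $t$ is odd, specializing to $j=(t-1)/2$ gives $t-1-j=(t-1)/2$, so SC2 forces $\nu^{((t-1)/2)}$ to be equal to its own conjugate; setting $\mu:=\nu^{((t-1)/2)}$ yields SC'2.

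For SC3 I would start from the general identity $|\lambda|=|\omega|+t\sum_{i=0}^{t-1}|\nu^{(i)}|$ of property P2 and pair components using SC2. Since a partition and its conjugate have the same size, SC2 gives $|\nu^{(i)}|=|\nu^{(t-1-i)}|$, so the sum collapses pairwise. When $t$ is even no index is self-paired and one obtains $\sum_{i=0}^{t-1}|\nu^{(i)}|=2\sum_{i=0}^{t/2-1}|\nu^{(i)}|$, which multiplied by $t$ produces the first line of SC3. When $t$ is odd, the middle index $(t-1)/2$ pairs with itself and contributes $|\mu|$ once rather than twice, giving $\sum_{i=0}^{t-1}|\nu^{(i)}|=2\sum_{i=0}^{(t-1)/2-1}|\nu^{(i)}|+|\mu|$, hence the second line of SC3. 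Finally, SC4 is the verbatim statement of P3 and requires no additional argument since it does not use self-conjugacy at all.

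The only point needing a little care is the odd case of SC3, where one has to track that $\mu$ enters with coefficient $t$ and not $2t$. The main technical content is already encapsulated in \eqref{mot}, whose justification rests on the facts that conjugation of $\lambda$ corresponds to the involution $c_i\mapsto 1-c_{-i-1}$ on the bi-infinite word $s(\lambda)$, and that the extraction of the subwords $(c_{ti+k})_{i\in\bbbz}$ intertwines this involution with conjugation of the quotient components (sending the residue class $k$ to the residue class $t-1-k$). Given those compatibilities, the verification of SC1--SC4 is a direct translation.
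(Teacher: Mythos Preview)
Your proposal is correct and matches the paper's approach: the paper does not give a standalone proof of Proposition~\ref{SCLittlewood} but cites \cite{MP} and presents the equivalence~\eqref{mot} immediately before it as the key ingredient, which is precisely what you use together with Proposition~\ref{Littlewood} to read off SC1--SC4.
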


The set $D(\lambda)=\lbrace h_{(i,i)}(\lambda),i=1,2,\dots\rbrace$ is called the \textit{set of main diagonal hook-lengths of $\lambda$}. For short, we will denote $h_{(i,i)}$ by $\delta_i$. It is clear that if $\lambda\in\ccsc$, then $D(\lambda)$ determines $\lambda$, and elements of $D(\lambda)$ are all distinct and odd. Hence, as observed in \cite{CHS}, for a self-conjugate partition $\lambda$, the set $D(\lambda)$ can be divided into the following two disjoint subsets:
\begin{align*}
D_1(\lambda)&:=\lbrace \delta_i \in D(\lambda): \delta_i \equiv 1 \pmod 4 \rbrace, \\ 
D_2(\lambda)&:=\lbrace \delta_i \in D(\lambda): \delta_i \equiv 3 \pmod 4 \rbrace .
\end{align*}

We have the following result.
\begin{lemma}\label{BGSC}
For a self-conjugate partition $\lambda$, set $r:=\vert D_1(\lambda)\vert$ and $s:=\vert D_3(\lambda)\vert$. Then
\[\BGP(\lambda)=r-s.\]
\end{lemma}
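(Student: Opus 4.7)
The plan is to exploit the classical decomposition of a self-conjugate Ferrers diagram into its ``principal hooks'' based on the diagonal boxes, and to compute the BG-contribution of each such hook separately.

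First I would write $\lambda\in\ccsc$ as a disjoint union of the $d(\lambda)$ hooks $H_i$ rooted at the diagonal box $(i,i)$: namely $H_i$ consists of $(i,i)$, the arm boxes $(i,i+1),\dots,(i,i+a_i)$, and the leg boxes $(i+1,i),\dots,(i+a_i,i)$, where $a_i$ denotes the arm-length of $(i,i)$. Self-conjugacy forces the leg-length to equal $a_i$, so the diagonal hook-length is $\delta_i=2a_i+1$. These hooks partition the Ferrers diagram of $\lambda$, so $\BGP(\lambda)=\sum_{i=1}^{d(\lambda)}\BGP(H_i)$, where $\BGP(H_i)$ denotes the sum of the $\pm 1$ entries of the boxes of $H_i$.

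Next I compute $\BGP(H_i)$. The $(i,j)$ box carries the sign $(-1)^{i+j}$ (so that $(1,1)$ gets $+1$). Thus the root $(i,i)$ contributes $+1$, and the arm and leg boxes contribute
\begin{equation*}
\sum_{k=1}^{a_i}(-1)^{2i+k}+\sum_{k=1}^{a_i}(-1)^{2i+k}=2\sum_{k=1}^{a_i}(-1)^{k},
\end{equation*}
which is $0$ if $a_i$ is even and $-2$ if $a_i$ is odd. Therefore $\BGP(H_i)=+1$ exactly when $a_i$ is even, i.e.\ when $\delta_i=2a_i+1\equiv 1\pmod 4$, and $\BGP(H_i)=-1$ exactly when $a_i$ is odd, i.e.\ when $\delta_i\equiv 3\pmod 4$.

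Summing over $i$, the hooks in bijection with $D_1(\lambda)$ contribute $+r$, and those in bijection with $D_2(\lambda)$ contribute $-s$, yielding $\BGP(\lambda)=r-s$. I do not anticipate any real obstacle here: the only mild point is checking the sign computation on arm and leg together, which is the reason the parity of $a_i$, hence the residue of $\delta_i$ modulo $4$ (rather than modulo $2$), is what governs the contribution.
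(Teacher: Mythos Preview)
Your proof is correct and follows essentially the same approach as the paper: decompose the self-conjugate Ferrers diagram into its principal diagonal hooks, observe that each such hook contributes $+1$ or $-1$ to the BG-rank according to whether its length is $\equiv 1$ or $\equiv 3 \pmod 4$, and sum. Your computation of the arm/leg contribution via $(-1)^{i+j}$ is just a slightly more explicit version of the paper's parity argument.
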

\begin{proof}
%Let $\lambda$ be a self-conjugate partition. 
Set $a_1>a_2>\dots>a_r \geq 0$ and $b_1>b_2>\dots>b_s\geq 0$ integers such that:
\begin{align*}
D_1(\lambda)&=\lbrace 4a_1+1,\dots,4a_r+1 \rbrace, \\ 
D_2(\lambda)&=\lbrace 4b_1+3,\dots,4b_s+3 \rbrace .
\end{align*}
Let us consider a hook in the main diagonal of $\lambda$ whose length is $4a+1$ for a nonnegative integer $a$. Then its leg and arm are both of length $2a$. As the BG-rank alternates in sign, we have $\BGP(4a+1)=1$. In the same way, we can observe that $\BGP(4b+3)=-1$ for any main diagonal hook-length $4b+3\in D_2(\lambda)$.
Hence 
\[\BGP(\lambda)=\sum_{i=1}^r\BGP(4a_i+1)+\sum_{j=1}^s \BGP(4b_j+3)=r-s.\]\end{proof}

\begin{remark}
Note that as its diagonal is filled with ``$+1$", we can consider $\lambda$ hook by hook. In the following example are depicted two hooks of length congruent to $1\pmod 4$ and $3\pmod 4$ respectively.
\begin{figure}[h!]
\begin{subfigure}[b]{.45\textwidth}
\centering
%\YFrench
\young(+-+-,-,+,-)
\caption{A hook of length $7=4\times 1+3$.}
\end{subfigure}
\begin{subfigure}[b]{.45\textwidth}
\centering
%\YFrench
\young(+-+-+,-,+,-,+)
\caption{A hook of length $9=4\times 2 +1$.}
\end{subfigure}
\end{figure}
\end{remark}
%LE DESSIN CI DESSUS NE ME PARAIT PAS COHERENT AVEC LA NOTATION ANGLAISE

In the case $t=2$, we can combine Lemma \ref{BGSC} and Proposition \ref{bghj} $(P4)$ to derive the following additional result.
\begin{proposition}\label{sc4}
The Littlewood decomposition $\Phi_2$ has the further property:
\begin{align*}
&(SC5)\quad \BGP(\lambda)=r-s=\begin{cases}
\frac{\ell(\omega)+1}{2}\quad \text{if}\quad \BGP(\lambda)>0,\\
-\frac{\ell(\omega)}{2} \quad \text{if}\quad \BGP(\lambda)\leq 0.
\end{cases}
%\\
%&(SC6) \WLP(\lambda)=2l\left(\nu^{(0)}\right)
\end{align*}
\end{proposition}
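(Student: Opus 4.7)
The statement of Proposition \ref{sc4} is a chain of two equalities $\BGP(\lambda)=r-s=\{\ldots\}$, and each of them has already been established in the excerpt. My plan is therefore very short: the first equality is exactly the content of Lemma \ref{BGSC}, so I would just cite it. The second equality is the case-by-case expression of Proposition \ref{bghj} (property (P4)), which was stated by Han and Ji for an \emph{arbitrary} partition $\lambda\in\ccp$; since $\ccsc\subset\ccp$, it applies verbatim to any self-conjugate $\lambda$. Concatenating the two gives the full chain of equalities in (SC5).

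The only thing I would make explicit in the write-up is why (P4) is still meaningful on $\ccsc$. By property (SC1) of Proposition \ref{SCLittlewood}, when $\lambda\in\ccsc$ the $2$-core $\omega$ produced by the Littlewood decomposition $\Phi_2$ is itself a self-conjugate $2$-core; but all $2$-cores are staircases $(k,k-1,\dots,1)$, so $\ell(\omega)=k$ is simply the staircase parameter. In particular, exactly one of the two formulas $(\ell(\omega)+1)/2$ and $-\ell(\omega)/2$ is an integer, depending on the parity of $k$, and this parity condition is what selects the sign of $\BGP(\lambda)$. As a sanity check, since $\BGP(\lambda)=r-s\in\bbbz$ in our self-conjugate setting, the integrality condition is automatically satisfied.

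Because no new combinatorial input is required, there is no real obstacle here. The interest of the proposition lies not in its proof but in the identity it produces as a by-product: for $\lambda\in\ccsc$ with $\Phi_2(\lambda)=(\omega,\underline{\nu})$, the counts $r=|D_1(\lambda)|$ and $s=|D_2(\lambda)|$ of main-diagonal hooks of residue $1$ and $3$ modulo $4$ are pinned down by $\ell(\omega)$ via $r-s=(\ell(\omega)+1)/2$ or $-\ell(\omega)/2$. This is the fact that will be reused in later sections, so I would phrase the conclusion so as to emphasize this numerical link.
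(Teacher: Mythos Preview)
Your proposal is correct and matches the paper's own approach exactly: the paper simply says that one combines Lemma~\ref{BGSC} and Proposition~\ref{bghj}~$(P4)$ to obtain Proposition~\ref{sc4}, with no further argument. Your added remarks about $(SC1)$ and the staircase shape of $2$-cores are fine as sanity checks but are not needed for the proof itself.
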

%
%\begin{proof}
%The first equality follows from lemma \ref{BGSC}. The second part is a direct application of property $(P4)$ of proposition \ref{bghj}.
%\end{proof}

%%%%%%%%%%%%%%%%%%%%%%%%%%%%%%%%%%%%%%%%%%%%%%%%%
\section{Multiplication-addition theorems for self-conjugate partitions}\label{sec:mult}
%%%%%%%%%%%%%%%%%%%%%%%%%%%%%%%%%%%%%%%%%%%%%%%%%

In this section, we prove Theorem \ref{bg4multiSC} stated in the introduction and we exhibit some interesting special cases.
\subsection{Proof of Theorem \ref{bg4multiSC}}

\noindent Let $t$ be a fixed positive even integer. Let $\rho_1$ and $\rho_2$ be two functions defined on $\bbbn$. First we will compute the term
\begin{equation}
\displaystyle\sum_{\substack{\lambda\in \ccsc\\core_t(\lambda)=\omega}}q^{|\lambda|}x^{|\mathcal{H}_{t}(\lambda)|}b^{\BGP(\lambda)}\prod_{h\in\mathcal{H}_{t}(\lambda)}\rho_1(h)\sum_{h\in\mathcal{H}_{t}(\lambda)}\rho_2(h),\label{term}
\end{equation}
where $\omega\in\ccsc_{(t)}$ is fixed. Let us remark that for $\lambda\in \ccsc$ and $\omega=core_{t}(\lambda)$, one has $\BGP(\lambda)=\BGP(\omega)$. Indeed $\omega$ is obtained by removing from $\lambda$ ribbons of even length $t$ and these have BG-rank $0$. Hence \eqref{term} can be rewritten as follows
%\begin{multline*}
%\displaystyle\sum_{\substack{\lambda\in \ccsc\\core_t(\lambda)=\omega}}q^{|\lambda|}b^{\BGP(\lambda)}x^{|\mathcal{H}_{t}(\lambda)|}\prod_{h\in\mathcal{H}_{t}(\lambda)}\rho_1(h)\sum_{h\in\mathcal{H}_{t}(\lambda)}\rho_2(h)\\
$$b^{\BGP(\omega)}q^{|\omega|}\displaystyle\sum_{\substack{\lambda\in \ccsc\\core_t(\lambda)=\omega}}q^{|\lambda|-|\omega|}x^{|\mathcal{H}_{t}(\lambda)|}\prod_{h\in\mathcal{H}_{t}(\lambda)}\rho_1(h)\sum_{h\in\mathcal{H}_{t}(\lambda)}\rho_2(h).$$
%\end{multline*}

Hence using properties $(SC3)$ and $(SC4)$ from Proposition \ref{SCLittlewood}, this is equal to
\begin{equation}
b^{\BGP(\omega)}q^{|\omega|}\sum_{\underline{\nu}\in \ccp^t} q^{t\displaystyle\lvert\underline{\nu}\rvert}x^{\displaystyle\lvert\underline{\nu}\rvert}\prod_{h\in\mathcal{H}(\underline{\nu})}\rho_1(th)\sum_{h\in\mathcal{H}(\underline{\nu})}\rho_2(th),\label{sumprod}
\end{equation}
%\end{multline*}
where $\lvert\underline{\nu}\rvert:=\displaystyle\sum_{i=0}^{t-1}\lvert\nu^{(i)}\rvert$.

 The product part $q^{t\displaystyle\lvert\underline{\nu}\rvert}x^{\displaystyle\lvert\underline{\nu}\rvert}\prod_{h\in\mathcal{H}(\underline{\nu})}\rho_1(th)$ inside the sum over $\underline{\nu}$ can be rewritten as follows
\begin{multline*}
 \prod_{i=0}^{t/2-1}q^{t\left(\lvert\nu^{(i)}\rvert+\lvert\nu^{(t-1-i)}\rvert\right)}x^{\lvert\nu^{(i)}\rvert+\lvert\nu^{(t-1-i)}\rvert}\prod_{h\in\mathcal{H}(\nu^{(i)})}\rho_1(th)\prod_{h\in\mathcal{H}(\nu^{(t-1-i)})}\rho_1(th).
\end{multline*}

When $t$ is even, as mentioned in the introduction, Proposition~\ref{SCLittlewood}~$(SC2)$ implies that the $t$-quotient $\underline{\nu}$ is uniquely determined by its first $t/2$ components, which are any partitions. It also implies that $\lvert\nu^{(i)}\rvert=\lvert\nu^{(t-1-i)}\rvert$ and $\mathcal{H}(\nu^{(i)})=\mathcal{H}(\nu^{(t-1-i)})$ for any $i\in\lbrace 0,\dots,t/2-1\rbrace$ because sizes and hook-lengths multisets of partitions are invariant by conjugation. Therefore
\begin{align*}
% q^{t\displaystyle\sum_{i=0}^t\lvert\nu^{(i)}\rvert}x^{\displaystyle\sum_{i=0}^t\lvert\nu^{(i)}\rvert}\prod_{h\in\mathcal{H}(\underline{\nu})}\rho_1(th)
q^{t\displaystyle\lvert\underline{\nu}\rvert}x^{\displaystyle\lvert\underline{\nu}\rvert}\prod_{h\in\mathcal{H}(\underline{\nu})}\rho_1(th)= \prod_{i=0}^{t/2-1}q^{2t\lvert\nu^{(i)}\rvert}x^{2\lvert\nu^{(i)}\rvert}\prod_{h\in\mathcal{H}(\nu^{(i)})}\rho_1^2(th).
\end{align*}

Moreover by application of Proposition~\ref{SCLittlewood}~$(SC2)$ and~$(SC4)$, the sum part $\sum_{h\in\mathcal{H}(\underline{\nu})}\rho_2(th)$ in~\eqref{sumprod} is 
\begin{align*}
\sum_{i=0}^{t/2-1}\left(\sum_{h\in\mathcal{H}(\nu^{(i)})}\rho_2(th)+\sum_{h\in\mathcal{H}(\nu^{(t-1-i)})}\rho_2(th)\right)=2\sum_{i=0}^{t/2-1}\sum_{h\in\mathcal{H}(\nu^{(i)})}\rho_2(th).
\end{align*}

Therefore \eqref{sumprod}, and thus \eqref{term}, become
\begin{multline*}
2 b^{\BGP(\omega)}q^{\vert \omega \vert}\sum_{i=0}^{t/2-1}\left(\sum_{\nu^{(i)}\in\ccp}\displaystyle q^{2t\lvert\nu^{(i)}\rvert}x^{2\lvert\nu^{(i)}\rvert}\prod_{h\in\mathcal{H}(\nu^{(i)})}\rho_1^2(th)\sum_{h\in\mathcal{H}(\nu^{(i)})}\rho_2(th)\right)\\
\times\left(\displaystyle\sum_{\nu\in\ccp} q^{2t\lvert\nu\rvert}x^{2\lvert\nu\rvert}\prod_{h\in\mathcal{H}(\nu)}\rho_1^2(th)\right)^{t/2-1}.
%2\sum_{j=0}^{t/2-1}\sum_{\nu^{(j)}\in\ccp}\displaystyle q^{2t\left(\lvert\nu^{(j)}\rvert\right)}x^{2\lvert\nu^{(j)}\rvert}\prod_{h\in\mathcal{H}(\nu^{(j)})}\rho_1^2(th)\sum_{h\in\mathcal{H}(\nu^{(j)})}\rho_2(th)\prod_{\substack{i=0\\i\neq j}}^{t/2-1}\displaystyle q^{2t\left(\lvert\nu^{(i)}\rvert\right)}x^{2\lvert\nu^{(i)}\rvert}\prod_{h\in\mathcal{H}(\nu^{(i)})}\rho_1^2(th).
\end{multline*}
Hence we get:
\begin{multline*}
\sum_{\substack{\lambda\in \ccsc\\core_t(\lambda)=\omega}}q^{|\lambda|}x^{|\mathcal{H}_{t}(\lambda)|}b^{\BGP(\lambda)}\prod_{h\in\mathcal{H}_{t}(\lambda)}\rho_1(h)\sum_{h\in\mathcal{H}_{t}(\lambda)}\rho_2(h)\\
=tb^{\BGP(\omega)}q^{|\omega|}\left(f_t\left(x^2q^{2t}\right)\right)^{t/2-1}g_t(x^2q^{2t}).
\end{multline*}

To finish the proof, it remains to show that
\begin{equation}
\sum_{\omega\in\ccsc_{(t)}}q^{\vert \omega\vert}b^{\BGP(\omega)}=\left(q^{2t};q^{2t}\right)_\infty^{t/2}\left(-bq;q^{4}\right)_\infty\left(-q^3/b;q^{4}\right)_\infty.\label{sct}
\end{equation}
For an integer $k$, let $c_{t/2}(k)$ be the number of $t/2$-core partitions of $k$.
Following~\cite{CHS}, define for a nonnegative integer $m$:
 $$\ccsc^{(m)}(n):=\left\lbrace\lambda\in \ccsc(n) : \vert D_1(\lambda)\vert - \vert D_3(\lambda)\vert = (-1)^{m+1}\lceil m/2\rceil\right\rbrace.$$ Setting $p=1$ in \cite[proposition $4.7$]{CHS}, we get that for any integer $m\geq 0$, the number of self-conjugate $t$-core partitions $\omega$ such that $\vert D_1(\omega)\vert - \vert D_3(\omega)\vert = (-1)^{m+1}\lceil m/2\rceil$ is
$$sc_{(t)}^{(m)}(n)=\begin{cases}
c_{t/2}(k) \quad \text{if} \quad n=4k+\frac{m(m+1)}{2},\\
0 \quad \text{otherwise}.
\end{cases}$$
To prove this, the authors define a bijection $\phi^{(m)}$ in \cite[Corollary 4.6]{CHS} between $\omega\in\ccsc^{(m)}_{(t)}$ and $\kappa\in\ccp_{\left(t/2\right)}$ with $|\omega|=4|\kappa|+m(m+1)/2$ and $\kappa$ independent of $m$.
%Choosing $p=1$, we have  in particular
%$$sc_{(t)}^{(m)}(n)=\begin{cases}
%c_{(\frac{t}{2})}(k) \quad \text{if} \quad n=4k+\frac{m(m+1)}{2},\\
%0 \quad \text{otherwise}.
%\end{cases}$$

Recall from Lemma \ref{BGSC} that $\BGP(\lambda)=r-s=\vert D_1(\lambda)\vert - \vert D_3(\lambda)\vert$. Therefore 
$$m=\begin{cases}
2 \BGP(\lambda)-1 \quad \text{if}\quad \BGP(\lambda)>0,\\
-2 \BGP(\lambda) \quad \text{if} \quad \BGP(\lambda)\leq 0.
\end{cases}
$$
Hence the bijection $\phi^{(m)}$ maps a $t$-core self-conjugate partition $\omega$ with BG-rank $j$ to a $t/2$-core partition independent of $j$. Then property $(SC5)$ from Proposition~\ref{sc4} implies that $|\omega|=j(2j-1)+4|\kappa|$ with $\kappa$ independent of $j$. Therefore we deduce
\begin{equation}
\sum_{\omega\in\ccsc_{(t)}} q^{|\omega|}b^{\BGP(\omega)}=\sum_{j=-\infty}^{\infty}b^jq^{j(2j-1)}\times\sum_{\kappa\in\ccp_{(t/2)}} q^{4|\kappa|}.\label{coreens}
\end{equation}

Now we compute the sum over $j$. Recall that the Jacobi triple product \cite{HS} can be stated as
\begin{equation*}
\sum_{j=-\infty}^{+\infty}(-1)^jz^jq^{j(j-1)/2}=\left(z;q\right)_{\infty}\left(q/z;q\right)_{\infty}\left(q;q\right)_{\infty}.
\end{equation*}
Therefore, setting $z=-bq$ and then replacing $q$ by $q^4$ in the above identity, yields
\begin{equation}
\sum_{j=-\infty}^{+\infty}b^jq^{j(2j-1)}
=\left(-bq;q^4\right)_{\infty}\left(-q^3/b;q^4\right)_{\infty}\left(q^4;q^4\right)_{\infty}.\label{jacob}
\end{equation}

Finally, to complete the proof of Theorem \ref{bg4multiSC}, it remains to compute the generating function of $t/2$-core partitions which is well-known (see \cite{GKS,Ha}). However we shortly recall its computation. By direct application of the Littlewood decomposition, using $(SC3)$ and the generating series \eqref{gspartitions} for $\ccp$ where $q$ is replaced by $q^{t/2}$, we have for $\omega\in \ccp_{(t/2)}$:
$$
\sum_{\substack{\lambda\in \ccp \\core_{t/2}(\lambda)=\omega}}q^{|\lambda|}=q^{|\omega|}\displaystyle\sum_{i=0}^{t/2-1}\sum_{\nu^{(i)}\in \mathcal{P}} q^{t\lvert\nu^{(i)}\rvert/2}= \frac{q^{|\omega|}}{\left(q^{t/2};q^{t/2}\right)_{\infty}^{t/2}}.
$$
As by \eqref{gspartitions}
$$\frac{1}{\left(q;q\right)_{\infty}}=\sum_{\lambda\in \ccp}q^{|\lambda|}=\sum_{\omega\in \ccp_{(t/2)}}\sum_{\substack{\lambda\in \ccp\\core_{t/2}(\lambda)=\omega}}q^{|\lambda|},
$$
we derive 
\begin{equation}\label{gsp}
\sum_{\omega\in \ccp_{(t/2)}}q^{|\omega|}=\frac{\left(q^{t/2};q^{t/2}\right)_{\infty}^{t/2}}{\left(q;q\right)_{\infty}}.
\end{equation}

Replacing $q$ by $q^4$ in \eqref{gsp}, and using \eqref{coreens} and \eqref{jacob}, this proves \eqref{sct} and the theorem.

\subsection{Special cases}

Here we list useful special cases of Theorem~\ref{bg4multiSC}.

First, by setting $\rho_2=1$, we have the following result.
\begin{corollary}\label{multiSC}
Set $\rho_1$ a function defined on $\mathbb{N}$, and let $t$ be a positive even integer and $f_t$ be defined as in Theorem \ref{bg4multiSC}. Then we have
\begin{multline*}
\sum_{\lambda\in \ccsc}q^{|\lambda|}x^{|\mathcal{H}_{t}(\lambda)|}b^{\BGP(\lambda)}\prod_{h\in\mathcal{H}_{t}(\lambda)}\rho_1(h)\\=\left(f_{t}(x^2q^{2t})\right)^{t/2}\left(q^{2t};q^{2t}\right)_{\infty}^{t/2}\left(-bq;q^4\right)_{\infty}\left(-q^3/b;q^4\right)_{\infty}.
\end{multline*}
\end{corollary}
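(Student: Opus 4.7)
The strategy is to mimic the proof of Theorem \ref{bg4multiSC}, but with the $\rho_2$-sum removed. The main point is that the factor of $t$ in the multiplication theorem came from choosing one of the $t/2$ free quotient components to carry the $\rho_2$-sum (together with the factor $2$ coming from the conjugation pairing in $(SC2)$), so when $\rho_2$ disappears we should simply pick up $(f_t(x^2 q^{2t}))^{t/2}$ in place of $t(f_t(x^2 q^{2t}))^{t/2-1} g_t(x^2 q^{2t})$.

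Concretely, I would begin by fixing a self-conjugate $t$-core $\omega$ and summing over those $\lambda \in \ccsc$ with $core_t(\lambda)=\omega$. Since $t$ is even, every rim hook removed during the Littlewood decomposition has even length and therefore vanishing BG-rank; this gives $\BGP(\lambda) = \BGP(\omega)$, exactly as in the proof of Theorem~\ref{bg4multiSC}. Using properties $(SC3)$ and $(SC4)$ from Proposition \ref{SCLittlewood}, the inner sum then becomes
\[
b^{\BGP(\omega)} q^{|\omega|} \sum_{\underline{\nu}} q^{t|\underline{\nu}|} x^{|\underline{\nu}|} \prod_{h \in \mathcal{H}(\underline{\nu})} \rho_1(th),
\]
where $\underline{\nu}$ ranges over $t$-tuples satisfying $(SC2)$, i.e. determined freely by their first $t/2$ components.

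Next I would use the pairing $\nu^{(i)} = (\nu^{(t-1-i)})'$ together with the conjugation-invariance of size and hook-length multiset to rewrite the product over $\mathcal{H}(\underline{\nu})$ as a product of squared contributions over the first $t/2$ components. This yields
\[
\sum_{\underline{\nu}} q^{t|\underline{\nu}|} x^{|\underline{\nu}|} \prod_{h \in \mathcal{H}(\underline{\nu})} \rho_1(th) = \prod_{i=0}^{t/2-1} \sum_{\nu^{(i)} \in \ccp} q^{2t|\nu^{(i)}|} x^{2|\nu^{(i)}|} \prod_{h \in \mathcal{H}(\nu^{(i)})} \rho_1^2(th) = \bigl(f_t(x^2 q^{2t})\bigr)^{t/2}.
\]
The only step requiring any thought is being careful with this pairing and the resulting doubling of exponents — this mirrors the calculation already done in the proof of Theorem~\ref{bg4multiSC}.

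Finally, summing over $\omega \in \ccsc_{(t)}$ and invoking identity \eqref{sct} from the proof of Theorem \ref{bg4multiSC},
\[
\sum_{\omega \in \ccsc_{(t)}} q^{|\omega|} b^{\BGP(\omega)} = (q^{2t};q^{2t})_\infty^{t/2} (-bq;q^4)_\infty (-q^3/b;q^4)_\infty,
\]
gives the claimed formula. There is no genuine obstacle here: the result is essentially a direct byproduct of the computation performed for Theorem~\ref{bg4multiSC}, with the multiplicative piece of the argument isolated from the additive one. Alternatively one could deduce the corollary from Theorem~\ref{bg4multiSC} by a differentiation/specialization argument, but the direct route above is cleaner.
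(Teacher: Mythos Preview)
Your proof is correct, but it follows a different route from the paper. The paper deduces Corollary~\ref{multiSC} \emph{formally} from Theorem~\ref{bg4multiSC}: it sets $\rho_2=1$, observes that then $g_t(q)=\sum_{\nu}|\nu|\,q^{|\nu|}\prod_h\rho_1(th)^2$ satisfies $g_t(x^2q^{2t})=\frac{x}{2}\frac{d}{dx}f_t(x^2q^{2t})$, so the right-hand side of Theorem~\ref{bg4multiSC} becomes $x\frac{d}{dx}\bigl[(f_t(x^2q^{2t}))^{t/2}\bigr]$ times the core factor, while the left-hand side acquires an extra factor $|\mathcal{H}_t(\lambda)|$; dividing by $x$ and integrating in $x$ then removes that factor and yields the corollary. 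Your approach instead reopens the Littlewood-decomposition computation from the proof of Theorem~\ref{bg4multiSC} and isolates the purely multiplicative part, obtaining $(f_t(x^2q^{2t}))^{t/2}$ directly before summing over cores via~\eqref{sct}. Both are valid; yours is more transparent and avoids the differentiation/integration trick (and the attendant worry about the constant of integration, which the paper glosses over), while the paper's has the virtue of treating the corollary as a genuine consequence of the already-proved theorem rather than a parallel computation. You even flag the paper's method as an alternative in your last sentence.
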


\begin{proof}
Take $\rho_2=1$ in Theorem \ref{bg4multiSC}. This yields $g_t=\displaystyle\sum_{\nu\in\mathcal{P}}\lvert\nu\rvert q^{|\nu|}\prod_{h\in\mathcal{H}(\nu)}\rho_1(th)^2$. Therefore we get $$g_t(x^2q^{2t})=\frac{x}{2}\frac{d}{dx}f_t(x^2q^{2t}).$$
The right-hand side of Theorem~\ref{bg4multiSC} is then
$$
\frac{t}{2}\left(f_{t}(x^2q^{2t})\right)^{t/2-1}x\frac{d}{dx}f_t(x^2q^{2t})\times
\left(q^{2t};q^{2t}\right)_{\infty}^{t/2}\left(-bq;q^4\right)_{\infty}\left(-q^3/b;q^4\right)_{\infty},
$$
while its left-hand side becomes
$$
\sum_{\lambda\in \ccsc}q^{|\lambda|}\lvert\cch_t(\lambda)\rvert x^{|\mathcal{H}_{t}(\lambda)|} b^{\BGP(\lambda)}\prod_{h\in\mathcal{H}_{t}(\lambda)}\rho_1(h).
$$
We complete the proof by dividing both sides by $x$ and integration with respect to $x$.
\end{proof}

Similarly, as when we take $\rho_1=1$ in Theorem~\ref{bg4multiSC}, then $f_t$ becomes the generating function~\eqref{gspartitions} of $\ccp$ (with $q$ replaced by $x^2q^{2t}$), we immediately derive the following special case.

\begin{corollary}\label{addiSC}
Set $\rho_2$ a function defined on $\mathbb{N}$ and let $t$ be a positive even integer and $g_t$ be defined as in Theorem \ref{bg4multiSC}.
Then
\begin{multline*}
\sum_{\lambda\in \ccsc}q^{|\lambda|}x^{|\mathcal{H}_{t}(\lambda)|}b^{\BGP(\lambda)}\sum_{h\in\mathcal{H}_{t}(\lambda)}\rho_2(h)=tg_t(x^2q^{2t})\\
\times\frac{\left(q^{2t};q^{2t}\right)_{\infty}^{t/2}}{\left(x^2q^{2t};x^2q^{2t}\right)_\infty^{t/2-1}}\left(-bq;q^4\right)_{\infty}\left(-q^3/b;q^4\right)_{\infty}.
\end{multline*}
\end{corollary}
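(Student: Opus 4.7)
The plan is to obtain Corollary~\ref{addiSC} as a direct specialization of Theorem~\ref{bg4multiSC} by taking $\rho_1\equiv 1$. This is in complete parallel with the derivation of Corollary~\ref{multiSC}, which is obtained by taking $\rho_2\equiv 1$ and then differentiating in $x$. Here no differentiation is needed; the structural identity of Theorem~\ref{bg4multiSC} already carries the $\sum_{h}\rho_2(h)$ factor on the left-hand side, so the specialization is immediate.

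More precisely, I would first observe that for $\rho_1\equiv 1$ one has $\prod_{h\in\mathcal{H}_t(\lambda)}\rho_1(h)=1$, so the left-hand side of Theorem~\ref{bg4multiSC} collapses to exactly the sum appearing in the left-hand side of Corollary~\ref{addiSC}. Then I would compute $f_t$ in this case: since
\[
f_t(q)=\sum_{\nu\in\mathcal{P}}q^{|\nu|}\prod_{h\in\mathcal{H}(\nu)}1^{2}=\sum_{\nu\in\mathcal{P}}q^{|\nu|}=\frac{1}{(q;q)_{\infty}}
\]
by the classical generating series~\eqref{gspartitions}, the substitution $q\mapsto x^{2}q^{2t}$ gives
\[
f_t(x^{2}q^{2t})=\frac{1}{(x^{2}q^{2t};x^{2}q^{2t})_{\infty}}.
\]
Note that $g_t$ is not simplified because the weight $\rho_2(th)$ still carries information; it just loses its $\rho_1$ factor inside the product.

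Plugging these into the right-hand side of Theorem~\ref{bg4multiSC}, the factor $\bigl(f_t(x^{2}q^{2t})\bigr)^{t/2-1}$ becomes $\bigl(x^{2}q^{2t};x^{2}q^{2t}\bigr)_{\infty}^{-(t/2-1)}$, and the remaining factors $t\,g_t(x^{2}q^{2t})\,(q^{2t};q^{2t})_{\infty}^{t/2}(-bq;q^{4})_{\infty}(-q^{3}/b;q^{4})_{\infty}$ assemble exactly into the claimed right-hand side of Corollary~\ref{addiSC}. This completes the derivation.

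There is essentially no obstacle to anticipate: the content of the statement is already entirely encoded in Theorem~\ref{bg4multiSC}, and the only step required is the bookkeeping substitution $\rho_1\equiv 1$ together with the standard evaluation of $f_t$ as the partition generating series. The only minor point worth stating cleanly in the proof is that, unlike Corollary~\ref{multiSC}, no integration or differentiation is needed here, which is why Corollary~\ref{addiSC} follows in one line.
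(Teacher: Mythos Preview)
Your proposal is correct and matches the paper's own argument exactly: the paper derives Corollary~\ref{addiSC} by setting $\rho_1=1$ in Theorem~\ref{bg4multiSC}, noting that $f_t$ then reduces to the partition generating series~\eqref{gspartitions}, and reading off the result immediately. Your write-up in fact spells out the substitution more explicitly than the paper does.
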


%%%%%%%%%%%%%%%%%%%%%%%%%%%%%%%%%%%%%%%%%%%%%%%%%
\section{Applications}\label{applications}
%%%%%%%%%%%%%%%%%%%%%%%%%%%%%%%%%%%%%%%%%%%%%%%%%

In~\cite{HJ}, Han and Ji derive from Theorem~\ref{Multiplication theorem} modular versions of many classical identities for partitions. In this section, we give self-conjugate modular analogues of most of them as consequences of Theorem~\ref{bg4multiSC} and its corollaries. The specificity for $\ccsc$ is that we have to consider $t$ even in all this section. Nevertheless, our results are slightly more general than in~\cite{HJ}, as the $\BGP$-rank can be included in our formulas, although this was only possible in~\cite{HJ} for $t=2$.

\subsection{A generating function}\label{41}
Setting $\rho_1(h)=1$ in Corollary \ref{multiSC}, we derive the following trivariate generating function for $\ccsc$: 

%the modular $\ccsc$ version of \eqref{gspartitions} becomes :
$$
\sum_{\lambda\in \ccsc}q^{|\lambda|}x^{|\mathcal{H}_t(\lambda)|}b^{\BGP(\lambda)}= \frac{\left(q^{2t};q^{2t}\right)_{\infty}^{t/2}}{\left(x^2q^{2t};x^2q^{2t}\right)_\infty^{t/2}}\left(-bq;q^4\right)_{\infty}\left(-q^3/b;q^4\right)_{\infty}.
$$

If we take $x=1$, we obtain the generating function with respect to the $\BGP$-rank for $\ccsc$:
$$
\sum_{\lambda\in \ccsc}q^{|\lambda|}b^{\BGP(\lambda)}= \left(-bq;q^4\right)_{\infty}\left(-q^3/b;q^4\right)_{\infty}.
$$

\subsection{Two classical hook-length formulas} \label{42}
Recall the following hook-length formulas:
\begin{align}
\sum_{\lambda\in\mathcal{P}}q^{|\lambda|}\prod_{h\in\cch}\frac{1}{h^2}&=\exp(q)\label{hl formula},\\
\sum_{\lambda\in\mathcal{P}}q^{|\lambda|}\prod_{h\in\cch}\frac{1}{h}&=\exp\left(q+\frac{q^2}{2}\right).\label{involution}
\end{align}
These formulas are direct consequences of the Robinson--Schensted--Knuth correspondence (see for example \cite{Sta99} p.324). Again, we can use Corollary~\ref{multiSC} to find self-conjugate modular versions for them. The difference between the case of $\ccp$ treated in \cite{HJ} and the case of self-conjugate partitions is that now $\rho_1$ is replaced by its square leading to applications with $1/h$ and $1/\sqrt{h}$ instead of $1/h^2$ and $1/h$.
 
The modular $\ccsc$ version of~\eqref{hl formula} is as follows.
\begin{corollary}\label{hl ana}
 For $t$ an even positive integer, we have:
\begin{multline*}
\sum_{\lambda\in \ccsc}q^{|\lambda|}x^{|\mathcal{H}_t(\lambda)|}b^{\BGP(\lambda)}\prod_{h\in\mathcal{H}_t(\lambda)}\frac{1}{h}\\=\left(q^{2t};q^{2t}\right)_{\infty}^{t/2}\left(-bq;q^4\right)_{\infty}\left(-q^3/b;q^4\right)_{\infty}\exp\left(\frac{x^2q^{2t}}{2t}\right).
\end{multline*}
\end{corollary}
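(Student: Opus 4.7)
The plan is to apply Corollary~\ref{multiSC} with the specialization $\rho_1(h)=1/h$. Under this choice, the left-hand side of Corollary~\ref{multiSC} becomes exactly the left-hand side of Corollary~\ref{hl ana}, since $\prod_{h\in\cch_t(\lambda)}\rho_1(h)=\prod_{h\in\cch_t(\lambda)}1/h$. So all the work is on the right-hand side, where we need to compute the series $f_t$ in closed form and raise it to the $(t/2)$-th power.

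Next, I would compute $f_t(q)$. By the definition in Theorem~\ref{bg4multiSC},
\[
f_t(q)=\sum_{\nu\in\ccp}q^{|\nu|}\prod_{h\in\cch(\nu)}\frac{1}{(th)^2}=\sum_{\nu\in\ccp}\left(\frac{q}{t^2}\right)^{|\nu|}\prod_{h\in\cch(\nu)}\frac{1}{h^2},
\]
where I have used the fact that $|\cch(\nu)|=|\nu|$ to pull out the factor $t^{-2|\nu|}$. By the classical hook-length formula~\eqref{hl formula}, this sum equals $\exp(q/t^2)$. Consequently, substituting $q\mapsto x^2q^{2t}$ gives $f_t(x^2q^{2t})=\exp(x^2q^{2t}/t^2)$, and raising to the $(t/2)$-th power yields
\[
\bigl(f_t(x^2q^{2t})\bigr)^{t/2}=\exp\!\left(\frac{x^2q^{2t}}{2t}\right).
\]

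Plugging this into the right-hand side of Corollary~\ref{multiSC} produces exactly the claimed right-hand side of Corollary~\ref{hl ana}. There is no real obstacle here: the only subtlety is the presence of the square $\rho_1(th)^2$ inside the definition of $f_t$ (as opposed to $\rho_1(th)$ in the analogous computation for $\ccp$ in~\cite{HJ}), which is precisely why starting from the classical formula with $1/h^2$ yields a self-conjugate modular formula with $1/h$ rather than $1/h^2$, as noted in the paragraph preceding the corollary statement.
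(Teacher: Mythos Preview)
Your proof is correct and follows exactly the same route as the paper: specialize $\rho_1(h)=1/h$ in Corollary~\ref{multiSC}, use the classical formula~\eqref{hl formula} to evaluate $f_t(q)=\exp(q/t^2)$, and substitute. You have simply spelled out the intermediate steps (pulling out $t^{-2|\nu|}$ via $|\cch(\nu)|=|\nu|$ and computing the $(t/2)$-th power) that the paper leaves implicit.
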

\begin{proof}
Taking $\rho_1(h)=1/h$ in Corollary~\ref{multiSC}, we have by using~\eqref{hl formula}:
$$f_t(q)=\exp\left(\frac{q}{t^2}\right).$$
\end{proof}

Setting $x=1$ and comparing coefficients $b^0$ on both sides of Corollary \ref{hl ana}, we get:
$$
\displaystyle\sum_{\substack{\lambda\in \ccsc\\\BGP(\lambda)=0}}q^{|\lambda|}\prod_{h\in\mathcal{H}_t(\lambda)}\frac{1}{h}=\frac{\left(q^{2t};q^{2t}\right)_{\infty}^{t/2}}{\left(q^4;q^4\right)_{\infty}}\exp\left(\frac{q^{2t}}{2t}\right).$$

Note that in \cite{HJ}, a similar formula was given for $\ccp$ only when $t=2$.

By identification of the coefficients of $b^jx^{2n}q^{2tn+j(2j-1)}$ on both sides of Corollary~\ref{hl ana}, we have for all integers $j$ and all nonnegative integers $n$:
$$
\sum_{\substack{\lambda\in\ccsc,\lambda \vdash 2tn+j(2j-1)\\ \substack{\lvert\cch_t(\lambda)\rvert=2n}\\\BGP(\lambda)=j}}\prod_{h\in\cch_t(\lambda)}\frac{1}{h}=\frac{1}{n!2^nt^n}.
$$
The case $j=0$ is the same result as P\'etr\'eolle (\cite{MP} Corollary 4.24):
$$
\sum_{\substack{\lambda\in\ccsc,\lambda\vdash 2tn\\ \lvert\cch_t(\lambda)\rvert=2n}}\prod_{h\in\cch_t(\lambda)}\frac{1}{h}=\frac{1}{n!2^nt^n},
$$
as the conditions on $\lambda$ in the summation necessarily imply by the Littlewood decomposition that $\omega=\emptyset$, which is equivalent to $\BGP(\lambda)=0$.

Now we prove the following modular $\ccsc$ version of \eqref{involution}.
\begin{corollary}\label{involsc}
For $t$ an even positive integer, we have:
\begin{multline*}
\sum_{\lambda\in \ccsc}q^{|\lambda|}x^{|\mathcal{H}_t(\lambda)|}b^{\BGP(\lambda)}\prod_{h\in\mathcal{H}_t(\lambda)}\frac{1}{h^{1/2}}\\=
\left(q^{2t};q^{2t}\right)_{\infty}^{t/2}\left(-bq;q^4\right)_{\infty}\left(-q^3/b;q^4\right)_{\infty}\exp\left(\frac{x^2q^{2t}}{2}+\frac{x^4q^{4t}}{4t}\right).
\end{multline*}
\end{corollary}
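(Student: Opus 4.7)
The plan is to apply Corollary~\ref{multiSC} directly with the choice $\rho_1(h)=h^{-1/2}$, which is precisely the $\ccsc$-analogue setup that mirrors how Corollary~\ref{hl ana} was derived from the choice $\rho_1(h)=1/h$. The key observation is that in Corollary~\ref{multiSC} the function $\rho_1$ enters via $\rho_1(th)^2$, so with $\rho_1(h)=h^{-1/2}$ one gets $\rho_1(th)^2 = 1/(th)$, turning the inner generating function $f_t$ into a classical hook-length sum weighted by $1/h$ rather than $1/h^2$.

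Concretely, I would first compute
\begin{equation*}
f_t(q) \;=\; \sum_{\nu\in\ccp} q^{|\nu|}\prod_{h\in\cch(\nu)}\frac{1}{th}
\;=\; \sum_{\nu\in\ccp}\Bigl(\frac{q}{t}\Bigr)^{|\nu|}\prod_{h\in\cch(\nu)}\frac{1}{h},
\end{equation*}
using the elementary fact that $|\cch(\nu)|=|\nu|$ to pull out the factor $t^{-|\nu|}$. Then I would invoke the classical hook-length identity \eqref{involution} with $q$ replaced by $q/t$ to conclude
\begin{equation*}
f_t(q)\;=\;\exp\!\left(\frac{q}{t}+\frac{q^2}{2t^2}\right).
\end{equation*}

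Next, I would substitute $q\mapsto x^2q^{2t}$ and raise to the power $t/2$ as required by Corollary~\ref{multiSC}:
\begin{equation*}
\bigl(f_t(x^2q^{2t})\bigr)^{t/2}
\;=\;\exp\!\left(\frac{t}{2}\left(\frac{x^2q^{2t}}{t}+\frac{x^4q^{4t}}{2t^2}\right)\right)
\;=\;\exp\!\left(\frac{x^2q^{2t}}{2}+\frac{x^4q^{4t}}{4t}\right),
\end{equation*}
which is exactly the exponential factor on the right-hand side of the claimed identity. Multiplying by the remaining universal factor $(q^{2t};q^{2t})_\infty^{t/2}(-bq;q^4)_\infty(-q^3/b;q^4)_\infty$ coming from Corollary~\ref{multiSC} produces the stated formula.

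There is essentially no obstacle here, the argument is a clean specialization of Corollary~\ref{multiSC}. The only point to verify carefully is the bookkeeping of the factor $t$: since $\rho_1(th)^2=1/(th)$ and not $1/h$, one must correctly absorb the $t^{-|\nu|}$ into the variable of \eqref{involution} before taking the $(t/2)$-th power, otherwise the coefficients $1/2$ and $1/(4t)$ in the exponent come out wrong. Once this rescaling is handled, the rest is immediate.
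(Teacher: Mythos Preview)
Your proof is correct and follows exactly the same approach as the paper: set $\rho_1(h)=h^{-1/2}$ in Corollary~\ref{multiSC}, use~\eqref{involution} (after the rescaling $q\mapsto q/t$) to obtain $f_t(q)=\exp\bigl(q/t+q^2/(2t^2)\bigr)$, and then substitute. The paper's write-up is terser, omitting the explicit bookkeeping of the $t^{-|\nu|}$ factor that you spell out, but the argument is identical.
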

\begin{proof}
Take $\rho_1(h)=1/h^{1/2}$ in Corollary~\ref{multiSC}. Then by direct application of~\eqref{involution}, we have:
$$f_t(q)=\exp\left(\frac{q}{t}+\frac{q^2}{2t^2}\right).
$$
\end{proof}

Setting $x=1$ and comparing coefficients $b^0$ on both sides of Corollary~\ref{involsc}, we derive:
$$
\sum_{\substack{\lambda\in \ccsc\\\BGP(\lambda)=0}}q^{|\lambda|}\prod_{h\in\mathcal{H}_t(\lambda)}\frac{1}{h^{1/2}}=
\frac{\left(q^{2t};q^{2t}\right)_{\infty}^{t/2}}{\left(q^4;q^4\right)_{\infty}}\exp\left(\frac{q^{2t}}{2}+\frac{q^{4t}}{4t}\right).
$$

On the other hand, by comparing coefficients of $q^{2tn+j(2j-1)}x^{2n}b^j$ on both sides of Corollary~\ref{involsc}, we have the following result, which is true for all integers $j$ and all positive integers $n$:
$$
\sum_{\substack{\lambda\in\ccsc\\ \lambda\vdash 2tn+j(2j-1)\\ \substack{\vert\cch_t(\lambda)\vert=2n\\\BGP(\lambda)=j}}}\prod_{h\in\mathcal{H}_t(\lambda)}\frac{1}{h^{1/2}}=\frac{1}{2^n}\sum_{k=0}^{\lfloor n/2\rfloor}\frac{1}{k!(n-2k)!t^k}.$$

\subsection{The Han--Carde--Loubert--Potechin--Sanborn formula}\label{43}
The following formula is an interpolation between \eqref{hl formula} and \eqref{involution} conjectured by Han in \cite{Hancon} and proved by Carde--Loubert--Potechin--Sanborn in \cite{CLPS} :
\begin{equation}
\sum_{\lambda\in\mathcal{P}}q^{|\lambda|}\displaystyle\prod_{h\in\cch}\frac{1}{h}\frac{1+z^h}{1-z^h}=\exp\left(\frac{1+z}{1-z}q+\frac{q^2}{2}\right).\label{invol twist}
\end{equation}

Here is a modular $\ccsc$ version of \eqref{invol twist}.
\begin{corollary}\label{invol ana}
For $t$ an even positive integer, for any complex number $z$, we have:
\begin{multline*}
\sum_{\lambda\in \ccsc}q^{|\lambda|}x^{|\mathcal{H}_t(\lambda)|}b^{\BGP(\lambda)}\prod_{h\in\mathcal{H}_t(\lambda)}\left(\frac{1}{h}\frac{1+z^h}{1-z^h}\right)^{1/2}\\=\left(q^{2t};q^{2t}\right)_{\infty}^{t/2}\left(-bq;q^4\right)_{\infty}\left(-q^3/b;q^4\right)_{\infty}\exp\left(\frac{1+z^t}{1-z^t}\frac{x^2q^{2t}}{2}+\frac{x^4q^{4t}}{4t}\right).
\end{multline*}
\end{corollary}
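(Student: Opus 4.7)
The plan is to apply Corollary~\ref{multiSC} with the specific choice $\rho_1(h) = \left(\frac{1}{h}\cdot\frac{1+z^h}{1-z^h}\right)^{1/2}$, following the same strategy used for Corollaries~\ref{hl ana} and~\ref{involsc}. With this choice one has $\rho_1(th)^2 = \frac{1}{th}\cdot\frac{1+z^{th}}{1-z^{th}}$, so the whole task reduces to computing the auxiliary series
\begin{equation*}
f_t(q) = \sum_{\nu\in\ccp} q^{|\nu|} \prod_{h\in\cch(\nu)} \frac{1}{th}\cdot\frac{1+z^{th}}{1-z^{th}}.
\end{equation*}

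The key step is to recognize that this series can be evaluated via the Han--Carde--Loubert--Potechin--Sanborn formula~\eqref{invol twist} after two simultaneous reparametrizations. Since $|\cch(\nu)|=|\nu|$, the factor $(1/t)^{|\nu|}$ pulls out of the product and can be absorbed into~$q$; and since $\frac{1+z^{th}}{1-z^{th}}$ is the rational function $\frac{1+Z^h}{1-Z^h}$ evaluated at $Z=z^t$, applying~\eqref{invol twist} with $q\mapsto q/t$ and $z\mapsto z^t$ should yield
\begin{equation*}
f_t(q) = \exp\left(\frac{1+z^t}{1-z^t}\cdot\frac{q}{t} + \frac{q^2}{2t^2}\right).
\end{equation*}

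To conclude, I would substitute this into Corollary~\ref{multiSC} after replacing $q$ with $x^2 q^{2t}$ and raising to the power $t/2$; the resulting exponent simplifies to $\frac{1+z^t}{1-z^t}\cdot\frac{x^2 q^{2t}}{2}+\frac{x^4 q^{4t}}{4t}$, matching the right-hand side of the claim. No serious obstacle is anticipated, since this proof is structurally identical to the one for Corollary~\ref{involsc}; the only subtlety worth highlighting is the combined rescaling of $q$ and the substitution $z\mapsto z^t$ that permits a clean invocation of~\eqref{invol twist}, a maneuver which is available precisely because $\rho_1$ acts on $th$ rather than on $h$ and because the rational function $(1+\cdot)/(1-\cdot)$ respects this substitution.
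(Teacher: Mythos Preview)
Your proposal is correct and follows essentially the same approach as the paper: choose $\rho_1(h)=\left(\tfrac{1}{h}\tfrac{1+z^h}{1-z^h}\right)^{1/2}$ in Corollary~\ref{multiSC}, then evaluate $f_t(q)$ via~\eqref{invol twist} to obtain $f_t(q)=\exp\!\left(\tfrac{1+z^t}{1-z^t}\tfrac{q}{t}+\tfrac{q^2}{2t^2}\right)$. Your additional explanation of the simultaneous substitutions $q\mapsto q/t$ and $z\mapsto z^t$ (using $|\cch(\nu)|=|\nu|$) is a welcome clarification of a step the paper leaves implicit.
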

\begin{proof}
Take $\rho_1(h)=\left(\frac{1}{h}\frac{1+z^h}{1-z^h}\right)^{1/2}$ in Corollary \ref{multiSC}. By direct application of~\eqref{invol twist}, we have:
$$
f_t(q)=\exp\left(\frac{1+z^t}{1-z^t}\frac{q}{t}+\frac{q^2}{2t^2}\right).
$$
\end{proof}

\subsection{The Nekrasov--Okounkov formula}\label{44}
In \cite{Pe}, P\'etr\'eolle discovered and proved analogues of the Nekrasov--Okounkov formula \eqref{NOdebut} for $\ccsc$ and $\ccdd$ (which is a slight deformation of $\ccsc$).  
In his work, a sign appears combinatorially, which corresponds to the algebraic sign in the associated Littlewood formulas for Schur functions~\cite[11.9.5 p.238]{Little}. Here it is possible to avoid the sign and only use~\eqref{NOdebut} with Theorem~\ref{bg4multiSC} to derive a modular $\ccsc$ version of Nekrasov--Okounkov type when $t$ is even. This is given in Corollary~\ref{NOsign} that we prove below. In Section~\ref{refine} we will prove refined versions of our results which take the signs into account.

\begin{proof}[Proof of Corollary~\ref{NOsign}]
Take $\rho_1(h)=\left(1-z/h^2\right)^{1/2}$ in Corollary~\ref{multiSC}, we have by application of~\eqref{NOdebut}:
$$f_t(q)=\left(q;q\right)_\infty^{z/t^2-1}.
$$
The conclusion follows when this result is plugged in the right-hand side of Corollary~\ref{multiSC}.
\end{proof}
By setting $z=-c^2/x^2$ and letting $x\rightarrow 0$, the left-hand side of Corollary \ref{NOsign} becomes:
$$
\sum_{\lambda\in\ccsc}q^{|\lambda|}b^{\BGP(\lambda)}\prod_{h\in\cch_t(\lambda)}\frac{c}{h}.$$
On the right hand side, the three first terms remain unchanged, while we can write for all $j\geq 1$:
$$
\left(1-x^{2j}q^{2tj}\right)^{(z/t-t)/2}= \exp\left(\frac{1}{2}\left(\frac{c^2}{tx^2}+t\right)\displaystyle\sum_{k\geq 1}\frac{x^{2jk}q^{2tjk}}{k}\right),$$
therefore
\begin{align*}
(x^2q^{2t};x^2q^{2t})_\infty^{(z/t-t)/2}&=\exp\left(\frac{1}{2}\left(\frac{c^2}{tx^2}+t\right)\displaystyle\sum_{k\geq 1}\frac{x^{2k}q^{2tk}}{k(1-x^{2k}q^{2tk})}\right)\\
&=\exp\left(\frac{c^2 q^{2t}}{2t}+O(x^2)\right)\\
&_{\overrightarrow{x\to 0}}\exp\left(\frac{c^2 q^{2t}}{2t}\right).
\end{align*}

Therefore
$$
\sum_{\lambda\in\ccsc}q^{|\lambda|}b^{\BGP(\lambda)}\prod_{h\in\cch_t(\lambda)}\frac{c}{h}=\left(q^{2t};q^{2t}\right)_{\infty}^{t/2}\left(-bq;q^4\right)_{\infty}\left(-q^3/b;q^4\right)_{\infty}\exp\left(\frac{c^2 q^{2t}}{2t}\right),
$$
which is equivalent to the identity in Corollary~\ref{hl ana}.

\subsection{The Bessenrodt--Bacher--Manivel formula}\label{45}

The following formula deals with power sums of hook-lengths. Its proof is based on a result due to Bessenrodt, Bacher and Manivel~\cite{Bess,BachMani} which provides a mapping, for any couple of positive integers $j<k$, the total number of occurrences of the part $k$ among all partitions of $n$ to the number of boxes whose hook-type is $(j,k-j-1)$. In~\cite{HJ}, Han and Ji explain that this result can be embedded in the following generalization, which is true for any complex number $\beta$:
\begin{equation}\label{beta}
\sum_{\lambda\in\mathcal{P}}q^{|\lambda|}\sum_{h\in\cch}h^{\beta}=\frac{1}{(q;q)_\infty} \sum_{k\geq 1}k^{\beta+1}\frac{q^k}{1-q^k}.
\end{equation}

The modular $\ccsc$ version of~\eqref{beta} takes the following form.
\begin{corollary}\label{betalemma}
For any complex number $\beta$ and $t$ an even positive integer, we have:
\begin{multline*}
\sum_{\lambda\in \ccsc}q^{|\lambda|}x^{|\mathcal{H}_t(\lambda)|}b^{\BGP(\lambda)}\sum_{h\in\mathcal{H}_t(\lambda)}h^{\beta}\\=\frac{\left(q^{2t};q^{2t}\right)_{\infty}^{t/2}}{\left(x^2q^{2t};x^2q^{2t}\right)_{\infty}^{t/2}}\left(-bq;q^4\right)_{\infty}\left(-q^3/b;q^4\right)_{\infty}\sum_{k\geq 1}\left(tk\right)^{\beta+1}\frac{x^{2k}q^{2kt}}{1-x^{2k}q^{2kt}}.
\end{multline*}
\begin{proof}
Take $\rho_2(h)=h^{\beta}$ in Corollary~\ref{addiSC} and then use~\eqref{beta} to compute:
$$
g_t(q)=\frac{t^{\beta}}{\left(q;q\right)_\infty}\sum_{k\geq 1}k^{\beta+1}\frac{q^k}{1-q^k}.
$$
\end{proof}
\end{corollary}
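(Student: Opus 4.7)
The plan is to specialize the pure addition-form Corollary~\ref{addiSC} (the $\rho_1\equiv 1$ case of Theorem~\ref{bg4multiSC}) to $\rho_2(h)=h^{\beta}$. Under this choice the auxiliary series $g_t$ reduces to
\[
g_t(q)=\sum_{\nu\in\ccp}q^{|\nu|}\sum_{h\in\cch(\nu)}(th)^{\beta}=t^{\beta}\sum_{\nu\in\ccp}q^{|\nu|}\sum_{h\in\cch(\nu)}h^{\beta},
\]
which is precisely the generating function for power sums of hook-lengths evaluated in closed form by the Bessenrodt--Bacher--Manivel identity~\eqref{beta}. Inserting \eqref{beta} gives
\[
g_t(q)=\frac{t^{\beta}}{(q;q)_\infty}\sum_{k\geq 1}k^{\beta+1}\frac{q^k}{1-q^k}.
\]

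Next I will substitute $q\mapsto x^2q^{2t}$, converting $(q;q)_\infty$ into $(x^2q^{2t};x^2q^{2t})_\infty$ and each term $k^{\beta+1}q^k/(1-q^k)$ into $k^{\beta+1}x^{2k}q^{2kt}/(1-x^{2k}q^{2kt})$. Feeding the resulting expression for $g_t(x^2q^{2t})$ back into the right-hand side of Corollary~\ref{addiSC} and combining the prefactor $t$ from that corollary with the $t^{\beta}$ coming from $g_t$ turns the summand into $(tk)^{\beta+1}x^{2k}q^{2kt}/(1-x^{2k}q^{2kt})$, while the additional $(x^2q^{2t};x^2q^{2t})_\infty^{-1}$ factor contributed by $g_t$ merges with $(x^2q^{2t};x^2q^{2t})_\infty^{-(t/2-1)}$ to give the clean denominator $(x^2q^{2t};x^2q^{2t})_\infty^{t/2}$. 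The self-conjugate $t$-core generating factor $(q^{2t};q^{2t})_\infty^{t/2}(-bq;q^4)_\infty(-q^3/b;q^4)_\infty$ is carried through unchanged, and the claimed identity falls out.

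The main step is really just bookkeeping: there is no genuine obstacle once Theorem~\ref{bg4multiSC} and its addition-only specialization Corollary~\ref{addiSC} are in hand. The only care needed is to track the interaction of the prefactor $t$ from Corollary~\ref{addiSC} with the $t^{\beta}$ arising from the $h\mapsto th$ rescaling inside $g_t$, so that they recombine cleanly as $(tk)^{\beta+1}$ in the final sum over $k$.
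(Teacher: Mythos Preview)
Your proposal is correct and follows exactly the same route as the paper: specialize Corollary~\ref{addiSC} with $\rho_2(h)=h^{\beta}$, evaluate $g_t$ via~\eqref{beta}, and substitute $q\mapsto x^2q^{2t}$. The only difference is that you spell out the bookkeeping (merging $t\cdot t^{\beta}$ into $(tk)^{\beta+1}$ and combining the two $(x^2q^{2t};x^2q^{2t})_\infty$ powers) which the paper leaves implicit.
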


\subsection{The Okada--Panova formula}\label{46}

The following formula is the generating function form of the Okada--Panova formula, which was conjectured by Okada and proved by Panova in~\cite{Pa}:
\begin{equation}\label{OP}
\sum_{\lambda\in\mathcal{P}}q^{|\lambda|}\prod_{h\in\cch}\frac{1}{h^2}\sum_{h\in\cch(\lambda)}\prod_{i=1}^r \left(h^2-i^2\right)=C(r)q^{r+1}\exp(q),
\end{equation}
where 
$$C(r):=\frac{1}{2(r+1)^2}\bi{2r}{r}\bi{2r+2}{r+1}.$$
To find a modular $\ccsc$ version of~\eqref{OP}, we want to use Theorem~\ref{bg4multiSC} with $\rho_1(h)=1/h$ and $\rho_2(h)=\displaystyle\prod_{i=1}^r\left(h^2-i^2\right)$. Recall from~\cite{HJ} that:
\begin{equation}\label{fOP}
 f_{\alpha}(q):=\sum_{\lambda\in \ccp}q^{|\lambda|}\prod_{h\in\cch(\lambda)}\frac{1}{\left(\alpha h\right)^2}=\exp\left(\frac{q}{\alpha^2}\right).
\end{equation}
We also define as in~\cite{HJ}:
\begin{equation*}
g_{\alpha}(q):=\sum_{\lambda\in \ccp}q^{|\lambda|}\prod_{h\in\cch}\frac{1}{\left(\alpha h\right)^2}\sum_{h\in\cch(\lambda)}\prod_{i=1}^r\left(\left(\alpha h\right)^2-i^2\right).
\end{equation*}

In order to evaluate $g_{\alpha}(q)$, Han and Ji introduce the polynomials defined by the following relations:
\begin{align*}
B_{r,0}(\alpha)&=\prod_{j=1}^r \left(\alpha^2-j^2\right),\\
B_{r,k}(\alpha)&=\left[\alpha^2\left(k+1\right)^2-r^2\right]B_{r-1,k}(\alpha)+\alpha^2 B_{r-1,k-1}(\alpha) \quad\text{for} \quad k\in\lbrace 1,\dots,r-1\rbrace,\\
B_{r,r}(\alpha)&=\alpha^{2r}.
\end{align*}

This enables them to rewrite $g_{\alpha}(q)$ in~\cite[Proposition 8.2]{HJ} as:
\begin{equation}\label{gOP}
g_{\alpha}(q)=\exp\left(\frac{q}{\alpha^2}\right)\sum_{k=0}^rB_{r,k}(\alpha)C(k)\left(\frac{q}{\alpha^2}\right)^{k+1}.
\end{equation}

We prove the following modular $\ccsc$ version of~\eqref{OP}.
\begin{corollary}\label{cor:OP}
For any positive integer $r$ and $t$ an even positive integer, we have:
\begin{multline*}
\sum_{\lambda\in\ccsc}q^{|\lambda|}x^{|\mathcal{H}_t(\lambda)|}b^{\BGP(\lambda)}\prod_{h\in\cch_t(\lambda)}\frac{1}{h}\sum_{h\in\cch_t(\lambda)}\prod_{i=1}^r \left(h^2-i^2\right)\\
=t\left(q^{2t};q^{2t}\right)_{\infty}^{t/2}\left(-bq;q^4\right)_{\infty}\left(-q^3/b;q^4\right)_{\infty}\\ \times\exp \left(\frac{x^2q^{2t}}{2t}\right)\sum_{k=\lceil (r-t+1)/t\rceil}^r B_{r,k}(t)C(k)\left(\frac{x^2q^{2t}}{t^2}\right)^{k+1}.
\end{multline*}
\end{corollary}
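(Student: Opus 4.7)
The plan is to apply Theorem~\ref{bg4multiSC} directly, with $\rho_1(h)=1/h$ and $\rho_2(h)=\prod_{i=1}^{r}(h^2-i^2)$. With this choice the auxiliary series $f_t$ and $g_t$ appearing in Theorem~\ref{bg4multiSC} coincide with the specializations at $\alpha=t$ of the series $f_\alpha$ and $g_\alpha$ of Han--Ji recalled in~\eqref{fOP} and~\eqref{gOP}. In particular,
\[
f_t(q)=\exp\!\bigl(q/t^{2}\bigr),\qquad g_t(q)=\exp\!\bigl(q/t^{2}\bigr)\sum_{k=0}^{r}B_{r,k}(t)\,C(k)\,(q/t^{2})^{k+1}.
\]

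After substituting $q\leftarrow x^{2}q^{2t}$, the two exponentials combine into $\exp\!\bigl((t/2)\cdot x^{2}q^{2t}/t^{2}\bigr)=\exp\!\bigl(x^{2}q^{2t}/(2t)\bigr)$ inside the product $(f_t(x^{2}q^{2t}))^{t/2-1}\,g_t(x^{2}q^{2t})$. Multiplying by $t\bigl(q^{2t};q^{2t}\bigr)_{\infty}^{t/2}\bigl(-bq;q^{4}\bigr)_{\infty}\bigl(-q^{3}/b;q^{4}\bigr)_{\infty}$ yields exactly the right-hand side of Corollary~\ref{cor:OP}, except that the inner sum a priori runs from $k=0$ to $r$ instead of from $\lceil(r-t+1)/t\rceil$ to $r$. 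Hence it remains to justify this truncation.

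A short computation shows that $k<\lceil(r-t+1)/t\rceil$ is equivalent to $(k+1)t\leq r$, so the task reduces to proving
\[
B_{r,k}(t)=0\qquad\text{whenever}\qquad(k+1)t\leq r,
\]
which I view as the main obstacle of this proof. I would argue by induction on $k$, with a secondary induction on $r$ at each step. The base $k=0$ is immediate: $B_{r,0}(t)=\prod_{j=1}^{r}(t^{2}-j^{2})$ contains the factor $t^{2}-t^{2}$ as soon as $r\geq t$. For the inductive step, the defining recurrence $B_{r,k}(\alpha)=[\alpha^{2}(k+1)^{2}-r^{2}]B_{r-1,k}(\alpha)+\alpha^{2}B_{r-1,k-1}(\alpha)$ does the job: if $r=(k+1)t$ the first coefficient vanishes and the remaining term $t^{2}B_{r-1,k-1}(t)$ is killed by the induction on $k$ applied to $r-1\geq kt$; if $r>(k+1)t$ the induction on $r$ kills the first term while the induction on $k$ still kills the second. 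This finishes the truncation and, together with the previous paragraph, the proof.
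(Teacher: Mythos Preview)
Your proof is correct and follows the same approach as the paper: apply Theorem~\ref{bg4multiSC} with $\rho_1(h)=1/h$ and $\rho_2(h)=\prod_{i=1}^r(h^2-i^2)$, then plug in the evaluations \eqref{fOP} and \eqref{gOP} at $\alpha=t$. Your explicit verification that the exponentials combine to $\exp(x^2q^{2t}/(2t))$ and, in particular, your inductive argument that $B_{r,k}(t)=0$ whenever $(k+1)t\le r$ (justifying the lower bound $\lceil(r-t+1)/t\rceil$ in the sum) are more detailed than the paper, whose one-line proof simply invokes \eqref{fOP}--\eqref{gOP} and leaves the truncation implicit (it is established in the Han--Ji paper~\cite{HJ}).
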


\begin{proof}
Take $\rho_1(h)=1/h$ and $\rho_2(h)=\displaystyle\prod_{i=1}^r\left(h^2-i^2\right)$ in Theorem~\ref{bg4multiSC} and $\alpha=t$ in~\eqref{fOP} and~\eqref{gOP} to rewrite $f_t$ and $g_t$, respectively.
\end{proof}

\subsection{The Stanley--Panova formula}\label{47}
Panova and Stanley proved the following formula~\cite{Pa,Sta09}:
\begin{equation}\label{lasteq}
n!\sum_{\lambda\vdash n} \prod_{h\in\cch(\lambda)}\frac{1}{h^2}\sum_{h\in \cch(\lambda)}h^{2k}=\sum_{i=0}^kT(k+1,i+1)C(i)\prod_{j=0}^i(n-j)
\end{equation}
where $T(k,i)$ is a central factorial number \cite[ex.5.8]{Sta99} defined for $k\geq 1$ and $i\geq 1$ by:
\begin{align*}
T(k,0)&=T(0,i)=0,\quad T(1,1)=1,\\
T(k,i)&=i^2T(k-1,i)+T(k-1,i-1)\quad
\text{for}\quad (k,i)\neq (1,1).
\end{align*}

By setting $\rho_1(h)=1/(\alpha h)$ and $\rho_2(h)=(\alpha h)^{2k}$, we have as in~\eqref{fOP}
\begin{equation}\label{fSP}
 f_{\alpha}(q)=\sum_{\lambda\in \ccp}q^{\vert\lambda\vert}\prod_{h\in\cch(\lambda)}\frac{1}{\left(\alpha h\right)^2}=\exp\left(\frac{q}{\alpha^2}\right),
\end{equation}
 and by using~\eqref{lasteq}
\begin{multline}\label{gSP}
g_{\alpha}(q)=\sum_{\lambda\in \ccp}q^{\vert\lambda\vert}\prod_{h\in\cch}\frac{1}{\left(\alpha h\right)^2}\sum_{h\in\cch(\lambda)}\alpha^{2k} h^{2k}\\
=\alpha^{2k}\exp\left(\frac{q}{\alpha^2}\right)\sum_{i=0}^k T(k+1,i+1)C(i)\left(\frac{q}{\alpha^2}\right)^{i+1}.
\end{multline}

Now we prove the following modular $\ccsc$ version of \eqref{lasteq}.
\begin{corollary}\label{cor:SP}
For any positive integer $k$ and $t$ an even positive integer, we have:
\begin{multline*}
\sum_{\lambda\in\ccsc}q^{|\lambda|}x^{|\mathcal{H}_t(\lambda)|}b^{\BGP(\lambda)}\prod_{h\in\cch_t(\lambda)}\frac{1}{h}\sum_{h\in\cch_t(\lambda)}h^{2k}\\
=t^{2k+1}\left(q^{2t};q^{2t}\right)_{\infty}^{t/2}\left(-bq;q^4\right)_{\infty}\left(-q^3/b;q^4\right)_{\infty}\\\exp \left(\frac{x^2q^{2t}}{2t}\right)\sum_{i=0}^k T(k+1,i+1)C(i)\left(\frac{x^2q^{2t}}{t^2}\right)^{i+1}.
\end{multline*}
\end{corollary}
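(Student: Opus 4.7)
The plan is to apply Theorem~\ref{bg4multiSC} directly with the choices $\rho_1(h) = 1/h$ and $\rho_2(h) = h^{2k}$, which make the left-hand side of the theorem match the left-hand side of the corollary exactly. The work then reduces to computing the corresponding formal power series $f_t$ and $g_t$ explicitly by invoking the known identities \eqref{fSP} and \eqref{gSP} with $\alpha = t$, and then simplifying the product on the right-hand side of Theorem~\ref{bg4multiSC}.

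More precisely, first I would observe that with $\rho_1(h) = 1/h$, the series $f_t(q) = \sum_{\nu \in \ccp} q^{|\nu|} \prod_{h \in \cch(\nu)} 1/(th)^2$ is exactly $f_\alpha(q)$ from \eqref{fSP} with $\alpha = t$, yielding $f_t(q) = \exp(q/t^2)$. Next, with $\rho_2(h) = h^{2k}$, the inner sum $\sum_{h \in \cch(\nu)} \rho_2(th) = t^{2k}\sum_{h \in \cch(\nu)} h^{2k}$, so $g_t(q) = t^{2k} g_\alpha(q)|_{\alpha=t}$ in the notation of \eqref{gSP}, giving
\[
g_t(q) = t^{2k}\exp\!\left(\frac{q}{t^2}\right)\sum_{i=0}^k T(k+1,i+1)\,C(i)\left(\frac{q}{t^2}\right)^{i+1}.
\]

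Then I would substitute $q \mapsto x^2 q^{2t}$ in $f_t$ and $g_t$ and combine them as in Theorem~\ref{bg4multiSC}. The key (easy) simplification is that the exponential factors telescope:
\[
\bigl(f_t(x^2q^{2t})\bigr)^{t/2-1}\,g_t(x^2q^{2t}) = t^{2k}\exp\!\left(\frac{t}{2}\cdot\frac{x^2q^{2t}}{t^2}\right)\sum_{i=0}^k T(k+1,i+1)C(i)\left(\frac{x^2q^{2t}}{t^2}\right)^{i+1},
\]
and $(t/2)(1/t^2) = 1/(2t)$ produces the claimed $\exp(x^2q^{2t}/(2t))$. Multiplying by the overall factor $t$ from Theorem~\ref{bg4multiSC} absorbs the $t^{2k}$ into $t^{2k+1}$, and the core generating factor $(q^{2t};q^{2t})_{\infty}^{t/2}(-bq;q^4)_\infty(-q^3/b;q^4)_\infty$ carries through unchanged.

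There is no genuine obstacle here: the proof is entirely a specialization of Theorem~\ref{bg4multiSC}, following the exact template already used in Corollaries~\ref{hl ana}, \ref{involsc}, \ref{invol ana}, and \ref{cor:OP}. The only point requiring a moment of care is bookkeeping the three sources of the factor $t$ (namely the prefactor $t$ in Theorem~\ref{bg4multiSC}, the $t^{2k}$ that comes from pulling $\alpha^{2k} = t^{2k}$ out of the inner sum, and the $t^{-2(i+1)}$ hidden inside $(q/t^2)^{i+1}$) and verifying that their combination matches the stated right-hand side.
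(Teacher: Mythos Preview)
Your proposal is correct and follows exactly the paper's own proof: specialize Theorem~\ref{bg4multiSC} with $\rho_1(h)=1/h$, $\rho_2(h)=h^{2k}$, and evaluate $f_t,g_t$ via \eqref{fSP}--\eqref{gSP} at $\alpha=t$. One minor slip: you write $g_t(q)=t^{2k}\,g_\alpha(q)|_{\alpha=t}$, but since the paper's $g_\alpha$ in \eqref{gSP} already carries the factor $\alpha^{2k}$, one actually has $g_t(q)=g_\alpha(q)|_{\alpha=t}$; your displayed expression for $g_t$ is nonetheless the correct one, so nothing downstream is affected.
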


\begin{proof}
Take $\rho_1(h)=1/h$ and $\rho_2(h)=h^{2k}$ in Theorem \ref{bg4multiSC} and $\alpha=t$ in \eqref{fSP} and \eqref{gSP} to rewrite $f_t$ and $g_t$, respectively.
\end{proof}

\section{Signed refinements}\label{refine}

In~~\cite{MP}, P\'etr\'eolle proved the following $\ccsc$ Nekrasov--Okounkov type formula similar to~\eqref{NOdebut}, which stands for any complex number $z$: 
\begin{equation}\label{formulemathias}
\sum_{\lambda\in\ccsc}\delta_\lambda q^{\lvert\lambda\rvert}\prod_{\substack{u\in\lambda\\h_u\in\cch(\lambda)}}\left(1-\frac{2z}{h_u\varepsilon_{u}}\right)=\left(\frac{\left(q^2;q^2\right)_\infty^{z+1}}{\left(q;q\right)_\infty}\right)^{2z-1}.
\end{equation}

Here, $\delta_\lambda$ and $\epsilon_{u}$ are signs depending on the partition $\lambda$, and the position of any box $u$ in its Ferrers diagram (written $u\in\lambda$ in the above formula), respectively. If the Durfee square of $\lambda$ has size $d$, then one simply defines \textbf{$\delta_{\lambda}$}$:=(-1)^d$. Recall that this sign also has an algebraic meaning regarding Littlewood summations for Schur functions indexed by partitions in $\ccsc$. 
Next, for any partition $\lambda\in\ccsc$ and a box $u=(i,j)\in\lambda$, \textit{$\varepsilon_{u}$} is defined as $-1$ if $u$ is a box strictly below the diagonal of the Ferrers diagram and as $1$ otherwise. 

Our goal in this section is to prove a multiplication-addition theorem similar to Theorem~\ref{bg4multiSC} including the above signs. Nevertheless one can notice that for $\lambda\in \ccsc$, we have actually $\delta_\lambda=(-1)^{|\lambda|}$. Indeed, by Lemma~\ref{BGSC} in Section~\ref{lit} and by definition of the $\BGP$-rank, one has $|\lambda|\equiv r-s\pmod 2$; and moreover $d=r+s$ by definition of $D_1(\lambda)$ and $D_3(\lambda)$. This means that the sign $\delta_\lambda$ can readily be omitted, by replacing $q$ by $-q$ in formulas like~\eqref{formulemathias} and their modular analogues.

Recall that Lemma \ref{lemdurf} allows to determine the position with respect to the main diagonal of the Ferrers diagram, thanks to the correspondence between a box of $\lambda$ and a pair of indices of the corresponding word $s(\lambda)$. Next, to include the sign $\varepsilon$, we will need a refinement of Proposition~\ref{Littlewood} $(P3)$, which is an immediate consequence of the Littlewood decomposition: for $\lambda\in\ccp$ and any box $u\in\lambda$ with hook-length $h_u\in\cch_t(\lambda)$ (here $t$ is any positive integer), there exists a unique $k\in\lbrace 0,\dots,t-1\rbrace$ and a unique box $u_k\in\nu^{(k)}$ such that $h_u=th_{u_k}$, where $h_{u_k}$ is the hook-length of $u_k$ in the partition $\nu^{(k)}$. We will say that the box $u_k$ is {\em associated to} the box $u$. We have the following result for self-conjugate partitions.
\begin{lemma}\label{box}
Set $\lambda \in\ccsc$, let $t$ be a positive even integer. Set $u \in \lambda$ such that $h_u\in \cch_t(\lambda)$. Then the following properties hold true:
\begin{enumerate}
\item The box $u$ does not belong to the main diagonal of $\lambda$.
\item The application $u\mapsto u'$, where $u'$ is the symmetric of $u$ with respect to the main diagonal of $\lambda$, is well-defined on $\lambda$, bijective and satisfies \linebreak $h_{u'}=h_u\in \cch_t(\lambda)$ and $\varepsilon_{u}=-\varepsilon_{u'}$.
\item If $u_k$ and $u_l$ are the boxes associated to $u$ and $u'$ respectively, then \linebreak $l=t-1-k$.
\end{enumerate}
\end{lemma}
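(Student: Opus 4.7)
The plan is to prove the three parts of Lemma~\ref{box} in order, exploiting the hook-length formula for diagonal boxes in part~(1), the conjugation symmetry $\lambda=\lambda'$ in part~(2), and the word-encoding symmetry from~\eqref{mot} in part~(3).

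For part~(1), I would observe that for $\lambda\in\ccsc$, a box $(i,i)$ on the main diagonal has hook-length $h_{(i,i)}=\lambda_i+\lambda_i'-2i+1=2(\lambda_i-i)+1$, which is odd. Since $t$ is even, no odd integer lies in $\cch_t(\lambda)$, so $u$ cannot be on the main diagonal. For part~(2), I would define $u'$ by swapping the row and column coordinates of $u$; since $\lambda=\lambda'$, this reflection sends $\lambda$ bijectively to itself (being an involution), and conjugation exchanges arm-length and leg-length, so $h_{u'}=h_u$ and hence $h_{u'}\in\cch_t(\lambda)$. Part~(1) guarantees that $u$ lies strictly above or strictly below the main diagonal, so $u'$ lies strictly on the opposite side; by the definition of $\varepsilon$, this forces $\varepsilon_u=-\varepsilon_{u'}$.

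For part~(3), I would work in the word encoding. The classical identity $s(\lambda')_i=1-s(\lambda)_{-i-1}$, already implicit in~\eqref{mot} (taking $\lambda\in\ccsc$ gives $c_i=1-c_{-i-1}$ for all $i\in\bbbz$), comes from the fact that conjugation reverses the border and swaps horizontal and vertical steps. Consequently, if the box $u$ has indices $(i_u,j_u)$ in $s(\lambda)$ with $c_{i_u}=1$ and $c_{j_u}=0$, then a direct check using $c_{-j_u-1}=1-c_{j_u}=1$ and $c_{-i_u-1}=1-c_{i_u}=0$ shows that the reflected box $u'$ has indices $(-j_u-1,-i_u-1)$ in $s(\lambda)$. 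Since $h_u=j_u-i_u$ is divisible by $t$, the residues satisfy $i_u\equiv j_u\equiv k\pmod{t}$, placing $u_k$ in $\nu^{(k)}$. Then $-j_u-1\equiv -i_u-1\equiv -k-1\equiv t-1-k\pmod{t}$, so $u'$ is associated to a box of $\nu^{(t-1-k)}$, which gives $l=t-1-k$.

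The main obstacle is ensuring the correct convention for the conjugation symmetry on the word, in particular fixing signs and off-by-one shifts in the indexing so that $u'$ genuinely corresponds to the indices $(-j_u-1,-i_u-1)$. Once this is in place, the rest of part~(3) is a one-line computation of residues modulo~$t$, and parts~(1) and~(2) are essentially immediate from the parity of diagonal hooks and the self-conjugacy of~$\lambda$.
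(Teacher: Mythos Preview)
Your proposal is correct and follows essentially the same approach as the paper: parts (1) and (2) are argued identically via the parity of diagonal hooks and the self-conjugacy of $\lambda$, and part (3) rests on the same word-encoding symmetry $c_i=1-c_{-i-1}$ to identify the indices of $u'$ as $(-j_u-1,-i_u-1)$ and then reduce modulo $t$. The only difference is presentational: the paper makes the correspondence between $u'$ and $(-j_u-1,-i_u-1)$ explicit by introducing the ordered index sets $I,J$ and the coordinate map $\Psi:(x,y)\mapsto(i_y,j_x)$ together with the bijection $\psi(i_m)=-1-i_m$, whereas you invoke directly the effect of conjugation on $s(\lambda)$; both routes justify the same key step you flagged as ``the main obstacle''.
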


\begin{proof}
For any $\ccsc$ partition, all hook-lengths of boxes on the main diagonal are odd numbers. As $t$ is even, the result $(1)$ follows.

Next $(2)$ is a direct consequence of $(1)$ and the definitions of $\ccsc$ and $\varepsilon_u$. 

Finally, to prove $(3)$ we need to explicit the bijection between the coordinates of a box of $\lambda$ and a pair of indices of the corresponding word $s(\lambda)=\left(c_i \right)_{i\in\bbbz}$ defined in Section~\ref{lit}. Let us introduce the two following sets:
\begin{align*}
I&:=\lbrace i\in\bbbz\,|\, c_i=1\;\text{and}\; \exists j\in \bbbz\;\text{such that}\; i<j,\,c_j=0\rbrace,\\
J&:=\lbrace j\in\bbbz\,|\, c_j=0 \;\text{and}\; \exists i\in \bbbz\;\text{such that}\; i<j,\,c_i=1\rbrace.
\end{align*}

By definition of $s(\lambda)$, those sets are finite. Therefore one can write $I=\lbrace i_1,\dots,i_{\lambda_1}\rbrace$ and $J=\lbrace j_1,\dots,j_{\lambda'_1}\rbrace$ such that the sequence $(i_a)_{a\in\lbrace 1,\dots,\lambda_1'\rbrace}$ (resp. $(j_b)_{b\in\lbrace 1,\dots,\lambda_1\rbrace}$) is strictly increasing (resp. strictly decreasing).

Let $F(\lambda)$ be the Ferrers diagram of $\lambda$ and define the application 
$$\begin{array}{l|rcl}
\Psi: & F(\lambda) & \to & I \times J \\
& (x,y) & \mapsto & (i_y,j_x).
\end{array}$$
Note that $\Psi$ is injective by monotony of the sequences $(i_a)$ and $(j_b)$.

Recall that $\lambda\in\ccsc$ translates in terms of the associated word to:
\begin{equation}\label{eqword}
c_j=1-c_{-1-j}\;\;\;\forall j\in\bbbn.
\end{equation}

This implies that $\lvert I\rvert =\lambda'_1=\lvert J\rvert =\lambda_1$. Let $\psi:I\to\psi(I)$ be the application such that $\psi(i_m):=-1-i_m$. The aforementioned property actually guarantees that $\psi(I)\subset J$. As $\lvert I\rvert=\lvert J\rvert$, we deduce that $\psi$ is bijective. Moreover, as $(i_a)_{a\in\lbrace 1,\dots,\lambda'_1\rbrace}$ is strictly increasing, we derive that $\left(\psi(i_a)\right)$ is strictly decreasing and for any \linebreak $a\in\lbrace  1,\dots,\lambda_1'=\lambda_1\rbrace$, we have $j_a=-1-i_a$.

Suppose that $(i_y,j_x)\in \Psi(F(\lambda))$ is such that $i_y\equiv k \pmod t$ and $j_x\equiv k \pmod t$.
% Therefore $u'$ is associated with $(i_x,j_y)$. 
%Set $u_k=(x_k,y_k)$ with associated indices $ (i_{y_k},j_{x_k})\in\Psi(F(\lambda))$. 
By~\eqref{mot} and the bijectivity of $\psi$ sending $(i_a)$ to $(j_b)$, we have that $(i_x,j_x)\in\Psi(F(\lambda))$ and $i_x \equiv t-1-k \pmod t$ and $j_y \equiv t-1-k \pmod t$.
As $u'$ has coordinates $(i_x,j_y)$ and is associated to the box $u_l$, we derive that $l=t-1-k$, which concludes the proof.
\end{proof}

\subsection{A signed addition-multiplication theorem}

We will now prove a generalization of Theorem \ref{bg4multiSC} which includes the sign mentioned above.

\begin{theorem}\label{signmultisc}
Set $t$ an even integer and let $\tilde{\rho_1},\tilde{\rho_2}$ be two functions defined on $\bbbz\times \lbrace-1,1\rbrace$. Set also $f_t(q),g_t(q)$ the formal power series defined by:
\begin{align*}
f_t(q)&:=\sum_{\nu\in\mathcal{P}}q^{|\nu|}\prod_{h\in\cch(\nu)}\tilde{\rho_1}(th,1)\tilde{\rho_1}(th,-1),\\
g_t(q)&:=\sum_{\nu\in\mathcal{P}}q^{|\nu|}\prod_{h\in\cch(\nu)}\tilde{\rho}_1(th,1)\tilde{\rho}_1(th,-1)\sum_{h\in\cch(\nu)}\left(\tilde{\rho_2}(th,1)+\tilde{\rho_2}(th,-1)\right).
\end{align*}
Then we have
\begin{multline*}
\sum_{\lambda\in \ccsc}q^{|\lambda|}x^{|\mathcal{H}_t(\lambda)|}b^{\BGP(\lambda)}\prod_{\substack{u\in\lambda\\h_u\in\cch_t(\lambda)}}\tilde{\rho_1}(h_u,\varepsilon_u)\sum_{\substack{u\in\lambda\\h_u\in\cch_t(\lambda)}}\tilde{\rho_2}(h_u,\varepsilon_u)\\
=\frac{t}{2}\left(f_t(x^2q^{2t})\right)^{t/2-1}g_t(x^2q^{2t})\left(q^{2t};q^{2t}\right)_{\infty}^{t/2}\left(-bq;q^4\right)_{\infty}\left(-q^3/b;q^4\right)_{\infty}.
\end{multline*}
\end{theorem}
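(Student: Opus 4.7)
The plan is to follow the proof of Theorem~\ref{bg4multiSC} step by step, using Lemma~\ref{box} to handle the sign-dependent factors. I first fix a self-conjugate $t$-core $\omega\in\ccsc_{(t)}$ and evaluate the restricted sum over $\lambda\in\ccsc$ with $core_t(\lambda)=\omega$. Since $t$ is even, every length-$t$ rim hook has BG-rank $0$, hence $\BGP(\lambda)=\BGP(\omega)$ and the factor $b^{\BGP(\omega)}q^{|\omega|}$ can be pulled out of the inner sum. Using $(SC2)$ and $(SC3)$ from Proposition~\ref{SCLittlewood}, what remains is a sum over the first $t/2$ components $(\nu^{(0)},\dots,\nu^{(t/2-1)})\in\ccp^{t/2}$ of the $t$-quotient, carrying the weight $\prod_{k=0}^{t/2-1}q^{2t|\nu^{(k)}|}x^{2|\nu^{(k)}|}$.

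The key new ingredient compared with Theorem~\ref{bg4multiSC} is how to handle the signs $\varepsilon_u$. By Lemma~\ref{box}(2), I partition the boxes $\{u\in\lambda:h_u\in\cch_t(\lambda)\}$ into pairs $\{u,u'\}$ satisfying $h_u=h_{u'}$ and $\varepsilon_u=-\varepsilon_{u'}$; Lemma~\ref{box}(3) then identifies each pair bijectively with a single box of $\nu^{(k)}$ for the unique $k\in\{0,\dots,t/2-1\}$ among the pair of indices $\{k,t-1-k\}$ that the Littlewood decomposition associates to $u$ and $u'$. Since the two boxes of a pair share a common hook-length and carry opposite signs, the signed product and signed sum collapse to one factor per pair:
\[
\prod_{\substack{u\in\lambda\\h_u\in\cch_t(\lambda)}}\tilde{\rho_1}(h_u,\varepsilon_u)=\prod_{k=0}^{t/2-1}\prod_{h\in\cch(\nu^{(k)})}\tilde{\rho_1}(th,1)\,\tilde{\rho_1}(th,-1),
\]
\[
\sum_{\substack{u\in\lambda\\h_u\in\cch_t(\lambda)}}\tilde{\rho_2}(h_u,\varepsilon_u)=\sum_{k=0}^{t/2-1}\sum_{h\in\cch(\nu^{(k)})}\bigl(\tilde{\rho_2}(th,1)+\tilde{\rho_2}(th,-1)\bigr).
\]

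Plugging these expressions back into the $\ccp^{t/2}$ sum and distributing the outer $\sum_k$, the contribution at a fixed $k$ is $g_t(x^2q^{2t})$ at the index $k$ and $f_t(x^2q^{2t})$ at each of the remaining $t/2-1$ indices, so that summing over $k\in\{0,\dots,t/2-1\}$ produces the prefactor $\tfrac{t}{2}\,g_t(x^2q^{2t})\,f_t(x^2q^{2t})^{t/2-1}$. The final step, summing $b^{\BGP(\omega)}q^{|\omega|}$ over $\omega\in\ccsc_{(t)}$, yields the factor $\left(q^{2t};q^{2t}\right)_\infty^{t/2}\left(-bq;q^4\right)_\infty\left(-q^3/b;q^4\right)_\infty$ by the Jacobi-triple-product argument of equation~\eqref{sct}, which carries over verbatim. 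The prefactor $t/2$, in place of the $t$ in Theorem~\ref{bg4multiSC}, reflects that here the involution $u\leftrightarrow u'$ counts each orbit only once, whereas in the unsigned setting the symmetry $\nu^{(k)}=(\nu^{(t-1-k)})'$ of the $t$-quotient produced an extra factor of~$2$.

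The main obstacle has essentially been absorbed into Lemma~\ref{box} itself: one must check that boxes with hook-length in $\cch_t(\lambda)$ never lie on the main diagonal of $\lambda$ (using that $t$ is even while diagonal hook-lengths are odd), that the reflection $u\mapsto u'$ flips $\varepsilon$, and that it corresponds at the level of the $t$-quotient to the index-swap $k\leftrightarrow t-1-k$, the last point coming from~\eqref{mot} via Lemma~\ref{lemdurf}. Once these three facts are in hand, the sign-refined calculation is parallel bookkeeping to the proof of Theorem~\ref{bg4multiSC}.
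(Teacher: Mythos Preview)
Your proposal is correct and follows essentially the same approach as the paper: fix the $t$-core, use Lemma~\ref{box} to pair boxes $u\leftrightarrow u'$ with equal hook-length and opposite $\varepsilon$, collapse the signed product and sum over the first $t/2$ quotient components, and then invoke~\eqref{sct} for the core sum. Your write-up is in fact more explicit than the paper's own proof, which only displays the signed analogue of~\eqref{sumprod} and the rewritten product part before leaving the remaining bookkeeping to the reader.
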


\begin{proof}
The proof follows the same steps as the one of Theorem~\ref{bg4multiSC}, but now~\eqref{sumprod} becomes
\begin{equation}\label{pointdedepart}
b^{\BGP(\omega)}q^{|\omega|}\sum_{\underline{\nu}\in \ccp^t} q^{t\displaystyle\lvert\underline{\nu}\rvert}x^{\displaystyle\lvert\underline{\nu}\rvert}\prod_{u\in\underline{\nu}}\tilde{\rho}_1(th_u,\varepsilon_u)\sum_{u\in\underline{\nu}}\tilde{\rho}_2(th_u,\varepsilon_u),
\end{equation}
where $\omega$ is in $\ccsc_{(t)}$.
The product part $q^{t\displaystyle\lvert\underline{\nu}\rvert}x^{\displaystyle\lvert\underline{\nu}\rvert}\prod_{u\in\underline{\nu}}\tilde{\rho}_1(th_u,\varepsilon_u)$ inside the sum over $\underline{\nu}$ can be rewritten as follows
\begin{multline*}\label{refpourthmsign}
 \prod_{i=0}^{t/2-1}q^{t\left(\lvert\nu^{(i)}\rvert+\lvert\nu^{(t-1-i)}\rvert\right)}x^{\lvert\nu^{(i)}\rvert+\lvert\nu^{(t-1-i)}\rvert}\prod_{h\in\mathcal{H}(\nu^{(i)})}\tilde{\rho}_1(th,1)\tilde{\rho}_1(th,-1).
\end{multline*}
Indeed, by Lemma~\ref{box}, each box $u\in\nu^{(i)}$, with $0\leq i\leq t-1$, is bijectively paired with a box $u'\in\nu^{(t-1-i)}$ satisfying $\tilde{\rho}_1(th_{u'},\varepsilon_{u'})=\tilde{\rho}_1(th_{u},-\varepsilon_{u})$. The sum part $\sum_{u\in\underline{\nu}}\tilde{\rho}_2(th_u,\varepsilon_u)$ in~\eqref{pointdedepart} can be rewritten in a similar fashion. We leave the rest of the proof to the reader as the remaining computations are similar to the ones used to prove Theorem~\ref{bg4multiSC}.
\end{proof}

Note that Theorem~\ref{bg4multiSC} is a consequence of Theorem~\ref{signmultisc}, by choosing \linebreak $\tilde{\rho}_1(a,\varepsilon)=\rho_1(a)$ and $\tilde{\rho}_2(a,\varepsilon)=\rho_2(a)$. Moreover by choosing $\tilde{\rho}_1=1$ or $\tilde{\rho}_2=1$, we have special cases similar to Corollaries~\ref{multiSC} and~\ref{addiSC}. However we will only highlight the case where $\tilde{\rho}_2=1$, as this one yields interesting consequences.

\begin{corollary}\label{multisignSC}
Set $\tilde{\rho_1}$ a function defined on $\mathbb{Z}\times\lbrace -1,1\rbrace$, and let $t$ be a positive even integer and $f_t$ be defined as in Theorem~\ref{signmultisc}. Then we have
\begin{multline*}
\sum_{\lambda\in \ccsc}q^{|\lambda|}x^{|\mathcal{H}_{t}(\lambda)|}b^{\BGP(\lambda)}\prod_{\substack{u\in\lambda\\h_u\in\cch_t(\lambda)}}\tilde{\rho}_1(h_u,\varepsilon_{u})\\=\left(f_{t}(x^2q^{2t})\right)^{t/2}\left(q^{2t};q^{2t}\right)_{\infty}^{t/2}\left(-bq;q^4\right)_{\infty}\left(-q^3/b;q^4\right)_{\infty}.
\end{multline*}
\end{corollary}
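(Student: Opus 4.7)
My plan is to derive Corollary \ref{multisignSC} from Theorem \ref{signmultisc} by specializing $\tilde{\rho}_2 \equiv 1$ and integrating with respect to $x$, exactly mirroring the way Corollary \ref{multiSC} was derived from Theorem \ref{bg4multiSC}. When $\tilde{\rho}_2$ is identically $1$, each summand $\tilde{\rho}_2(th,1)+\tilde{\rho}_2(th,-1)$ in the definition of $g_t$ equals $2$, so the inner sum collapses to $2|\cch(\nu)|=2|\nu|$, giving $g_t(q)=2q\,f_t'(q)$. A chain-rule computation then yields $g_t(x^2q^{2t})=x\,\frac{d}{dx}f_t(x^2q^{2t})$, which lets me rewrite the right-hand side of Theorem \ref{signmultisc} as
\begin{equation*}
x\cdot\frac{d}{dx}\Bigl[\bigl(f_t(x^2q^{2t})\bigr)^{t/2}\Bigr]\cdot \bigl(q^{2t};q^{2t}\bigr)_\infty^{t/2}\bigl(-bq;q^4\bigr)_\infty\bigl(-q^3/b;q^4\bigr)_\infty.
\end{equation*}

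At the same time, the specialized left-hand side of Theorem \ref{signmultisc} becomes $x\,\frac{d}{dx}F(x)$, where $F(x)$ is the target series on the left-hand side of Corollary \ref{multisignSC}. Dividing both sides by $x$ and integrating with respect to $x$ therefore recovers the claimed identity up to an additive constant. To pin down this constant, I would evaluate at $x=0$: only self-conjugate $t$-cores contribute on the left, since the exponent $|\cch_t(\lambda)|$ of $x$ must vanish, and for those $t$-cores the product over $\cch_t$ is empty. Hence
\begin{equation*}
F(0)=\sum_{\omega\in\ccsc_{(t)}}q^{|\omega|}b^{\BGP(\omega)},
\end{equation*}
which coincides with the right-hand side at $x=0$ (where $f_t(0)=1$) by virtue of identity \eqref{sct}, already established in the proof of Theorem \ref{bg4multiSC}. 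Thus the integration constant is zero and the identity follows.

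I expect no real obstacle here: the whole argument is a one-variable antidifferentiation built on the signed multiplication-addition theorem. The only structural point worth keeping in mind is that, in the proof of Theorem \ref{signmultisc}, Lemma \ref{box} systematically pairs boxes of $\nu^{(k)}$ with boxes of $\nu^{(t-1-k)}$ and contributes the combined weight $\tilde{\rho}_1(th,1)\tilde{\rho}_1(th,-1)$; this is precisely what ensures that Corollary \ref{multisignSC} keeps the same shape as Corollary \ref{multiSC}, with $\rho_1(th)^2$ replaced by $\tilde{\rho}_1(th,1)\tilde{\rho}_1(th,-1)$.
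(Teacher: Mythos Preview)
Your proof is correct and follows essentially the same approach as the paper: the paper explicitly says that Corollary~\ref{multisignSC} is obtained from Theorem~\ref{signmultisc} by choosing $\tilde{\rho}_2=1$ ``similar to Corollaries~\ref{multiSC} and~\ref{addiSC}'', and your specialization $\tilde{\rho}_2\equiv1$ followed by the $x$-antidifferentiation argument mirrors exactly the proof of Corollary~\ref{multiSC}. Your treatment of the integration constant via $x=0$ and~\eqref{sct} is a welcome bit of explicitness that the paper leaves implicit.
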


\subsection{Applications}

We have applications similar to the ones obtained in Sections~\ref{41}--\ref{47}. However we only highlight the cases concerning Sections~\ref{41}--\ref{44}, which are the most interesting in our opinion and are all derived from Corollary~\ref{multisignSC}.

First note that the generating series obtained with $\tilde{\rho}_1=1$ is the same as the one in Section~\ref{41}. 

Next, when $t$ is an even positive integer and $\lambda\in\ccsc$, notice that 
$$\prod_{\substack{u\in\lambda\\h_u\in\cch_t(\lambda)}}\varepsilon_u=(-1)^{\cch_t(\lambda)/2}.$$
Therefore the specialization $\tilde{\rho}_1(a,\varepsilon)=1/(a\varepsilon)$ yields a hook-length formula \linebreak equivalent to the one in Corollary~\ref{hl ana} when $x$ is replaced by $x\sqrt{-1}$. Similarly, the specialization $\tilde{\rho}_1(a,\epsilon)=1/(a\epsilon)^{1/2}$ yields a hook-length formula equivalent to the one in Corollary~\ref{involsc} when $x$ is replaced by $x\sqrt[4]{-1}$.

Now the signed modular analogue of~\eqref{invol twist} is as follows.
\begin{corollary}\label{invol sign ana}
For $t$ an even positive integer, for any complex number $z$, we have:
\begin{multline*}
\sum_{\lambda\in \ccsc}q^{|\lambda|}x^{|\mathcal{H}_t(\lambda)|}b^{\BGP(\lambda)}\prod_{\substack{u\in\lambda\\h_u\in\cch_t(\lambda)}}\frac{1}{h_u^{1/2}}\frac{1+\varepsilon_{u} z^{h_u}\sqrt{-1}}{1-\varepsilon_{u} z^{h_u}}\\=\left(q^{2t};q^{2t}\right)_{\infty}^{t/2}\left(-bq;q^4\right)_{\infty}\left(-q^3/b;q^4\right)_{\infty}\exp\left(\frac{1+z^t}{1-z^t} \frac{x^2q^{2t}}{2}+\frac{x^4q^{4t}}{4t}\right).
\end{multline*}
\end{corollary}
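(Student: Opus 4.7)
The plan is to deduce Corollary~\ref{invol sign ana} as a direct specialization of the signed multiplication-addition theorem (Corollary~\ref{multisignSC}). I would take $\tilde{\rho}_2 \equiv 1$ and
$$\tilde{\rho}_1(a, \varepsilon) := \frac{1}{a^{1/2}} \cdot \frac{1 + \varepsilon z^a \sqrt{-1}}{1 - \varepsilon z^a},$$
so that the left-hand side of Corollary~\ref{multisignSC}, which features $\prod_u \tilde{\rho}_1(h_u, \varepsilon_u)$, matches the left-hand side of the claim box by box. The sum $\sum_{\cch_t(\lambda)} \tilde{\rho}_2(h_u, \varepsilon_u)$ becomes $|\cch_t(\lambda)|$, which is absorbed in the $x$-weight after the usual integration/differentiation trick (cf.\ the passage from Theorem~\ref{bg4multiSC} to Corollary~\ref{multiSC}, and therefore from Theorem~\ref{signmultisc} to Corollary~\ref{multisignSC} already carried out above).

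The conceptual step is the pairing identity
$$\tilde{\rho}_1(a, 1)\,\tilde{\rho}_1(a, -1) = \frac{1}{a}\cdot\frac{(1+z^a\sqrt{-1})(1-z^a\sqrt{-1})}{(1-z^a)(1+z^a)} = \frac{1}{a}\cdot\frac{1+z^{2a}}{1-z^{2a}},$$
which explains the presence of the awkward $\sqrt{-1}$: by Lemma~\ref{box}(3) the restricted Littlewood decomposition pairs each hook-box in $\nu^{(k)}$ with one in $\nu^{(t-1-k)}$ carrying the opposite sign $\varepsilon$, so only this symmetrized product ever enters $f_t$. The factor $\sqrt{-1}$ is precisely what collapses the pair into the rational shape $\frac{1+\,\cdot\,}{1-\,\cdot\,}$ needed to invoke the Han--Carde--Loubert--Potechin--Sanborn formula \eqref{invol twist}.

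With that identity in hand, $f_t(q) = \sum_\nu q^{|\nu|} \prod_h \frac{1}{th}\frac{1+z^{2th}}{1-z^{2th}}$ is evaluated by the standard rescaling $q \mapsto q/t$ (using $|\cch(\nu)|=|\nu|$ to pull out the factor $t^{-|\nu|}$) together with the substitution $z \mapsto z^{2t}$ in \eqref{invol twist}; this produces $f_t(q)$ in the exponential closed form
$$f_t(q) = \exp\!\left(\frac{1+z^{2t}}{1-z^{2t}}\,\frac{q}{t} + \frac{q^2}{2t^2}\right),$$
which, after evaluation at $x^2 q^{2t}$ and raising to the power $t/2$, yields exactly the exponential factor appearing on the right-hand side of the statement. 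The Pochhammer and Jacobi-triple-product factors $(q^{2t};q^{2t})_\infty^{t/2}(-bq;q^4)_\infty(-q^3/b;q^4)_\infty$ are inherited unchanged from Corollary~\ref{multisignSC}, since they encode only the contribution of the self-conjugate $t$-core already computed in the proof of Theorem~\ref{bg4multiSC}.

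The main (and essentially only) subtle point is the choice of $\tilde{\rho}_1$: one must engineer the $\varepsilon$-dependence of the numerator to be imaginary so that the pairing numerator collapses to $1 - \varepsilon^2(z^a\sqrt{-1})^2 = 1 + z^{2a}$. Once this observation is made, no additional combinatorics is required beyond Corollary~\ref{multisignSC} and the Littlewood-pairing of Lemma~\ref{box}; the remainder is routine specialization.
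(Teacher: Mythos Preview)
Your approach is identical to the paper's: specialize Corollary~\ref{multisignSC} with
\[
\tilde{\rho}_1(a,\varepsilon)=\frac{1}{a^{1/2}}\,\frac{1+\varepsilon z^{a}\sqrt{-1}}{1-\varepsilon z^{a}},
\]
compute the pairing product $\tilde{\rho}_1(a,1)\tilde{\rho}_1(a,-1)$, and feed it into~\eqref{invol twist}. The paper's proof is a two-line sketch doing precisely this.

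There is one point worth flagging. Your pairing computation
\[
\tilde{\rho}_1(a,1)\,\tilde{\rho}_1(a,-1)=\frac{1}{a}\cdot\frac{1+z^{2a}}{1-z^{2a}}
\]
is arithmetically correct, whereas the paper's proof records the identity as $(1+z^{a})/(a(1-z^{a}))$. Following your (correct) computation through, the exponential factor one obtains is
\[
\exp\!\left(\frac{1+z^{2t}}{1-z^{2t}}\,\frac{x^{2}q^{2t}}{2}+\frac{x^{4}q^{4t}}{4t}\right),
\]
with $z^{2t}$ rather than the $z^{t}$ printed in the statement. So your claim that this ``yields exactly the exponential factor appearing on the right-hand side of the statement'' is not literally true: you have in fact uncovered a typo in the corollary (equivalently, the stated identity holds after the harmless reparametrization $z\mapsto z^{1/2}$). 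Apart from this discrepancy, your argument and the paper's coincide.
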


\begin{proof}
Take 
$$\tilde{\rho}_1(a,\varepsilon)=\frac{1}{a^{1/2}}\frac{1+\varepsilon z^{a}\sqrt{-1}}{1-\varepsilon z^{a}}$$
 in Corollary~\ref{multisignSC} and use the identity $\tilde{\rho}_1(a,1)\tilde{\rho}_1(a,-1)=(1+z^a)/(a(1-z^a))$ and Formula~\eqref{invol twist} to conclude.
\end{proof}

The signed modular SC analogue of the Nekrasov--Okounkov formula~\eqref{NOdebut}, which is actually a modular analogue of~\eqref{formulemathias}, is the following.
\begin{corollary}\label{NOsignfinal}
For any complex number $z$ and $t$ an even positive integer, we have:
\begin{multline*}
\sum_{\lambda\in\ccsc}q^{|\lambda|}x^{|\mathcal{H}_t(\lambda)|}b^{\BGP(\lambda)}\prod_{\substack{u\in\lambda\\h_u\in\cch_t(\lambda)}}\left(1-\frac{z}{h_u\varepsilon_{u}}\right)\\=\left(q^{2t};q^{2t}\right)_{\infty}^{t/2}\left(-bq;q^4\right)_{\infty}\left(-q^3/b;q^4\right)_{\infty}\left(x^{2}q^{2t};x^{2}q^{2t}\right)_\infty^{(z^2/t-t)/2}.
\end{multline*}
\end{corollary}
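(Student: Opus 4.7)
The plan is to derive Corollary~\ref{NOsignfinal} as a direct specialization of Corollary~\ref{multisignSC}, so that the bulk of the work is already absorbed into the signed multiplication theorem. The natural choice is to set
$$\tilde{\rho}_1(a,\varepsilon) := 1 - \frac{z}{a\varepsilon}, \qquad \tilde{\rho}_2 \equiv 1,$$
so that the left-hand side of Corollary~\ref{multisignSC} matches exactly the left-hand side of the desired identity. The first step of my proof will simply be to write out this substitution.

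Next I will compute $f_t$ under this specialization. The key simplification is the difference-of-squares identity
$$\tilde{\rho}_1(a,1)\,\tilde{\rho}_1(a,-1) = \left(1 - \frac{z}{a}\right)\left(1 + \frac{z}{a}\right) = 1 - \frac{z^2}{a^2},$$
which has the pleasant effect of turning the signed product into an unsigned one and squaring the parameter $z$. Applied with $a = th$, this yields
$$f_t(q) = \sum_{\nu\in\ccp} q^{|\nu|} \prod_{h\in\cch(\nu)} \left(1 - \frac{z^2}{t^2 h^2}\right).$$
The second step is to recognise this as the Nekrasov--Okounkov formula~\eqref{NOdebut} with parameter $z^2/t^2$, giving
$$f_t(q) = (q;q)_\infty^{z^2/t^2 - 1}.$$

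The third step is bookkeeping in the right-hand side of Corollary~\ref{multisignSC}. Substituting $q \mapsto x^2 q^{2t}$ and raising to the power $t/2$ produces
$$\left(f_t(x^2 q^{2t})\right)^{t/2} = \left(x^2 q^{2t};x^2 q^{2t}\right)_\infty^{(z^2/t^2 - 1)\cdot t/2} = \left(x^2 q^{2t};x^2 q^{2t}\right)_\infty^{(z^2/t - t)/2},$$
and multiplying by the remaining factors $(q^{2t};q^{2t})_\infty^{t/2}(-bq;q^4)_\infty(-q^3/b;q^4)_\infty$ gives precisely the right-hand side of the statement.

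There is no real obstacle here: the only thing to verify is that the signed specialization $\tilde{\rho}_1(a,\varepsilon)=1-z/(a\varepsilon)$ is legitimately covered by Corollary~\ref{multisignSC} (it is, since $\tilde{\rho}_1$ is defined on $\bbbz\times\{-1,1\}$), and that the difference-of-squares collapse really does reduce the signed product to the classical Nekrasov--Okounkov generating function evaluated at the rescaled parameter. The entire argument is thus a one-line invocation of Corollary~\ref{multisignSC} together with~\eqref{NOdebut}.
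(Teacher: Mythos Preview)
Your proposal is correct and follows essentially the same approach as the paper: specialize $\tilde{\rho}_1(a,\varepsilon)=1-z/(a\varepsilon)$ in Corollary~\ref{multisignSC}, use the difference-of-squares identity $\tilde{\rho}_1(a,1)\tilde{\rho}_1(a,-1)=1-z^2/a^2$, and then apply the Nekrasov--Okounkov formula~\eqref{NOdebut} with parameter $z^2/t^2$ to evaluate $f_t$. Your write-up simply supplies more detail than the paper's one-line proof.
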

\begin{proof}
Take $\tilde{\rho}_1(a,\varepsilon)=1-z/(a\varepsilon)$ in Corollary~\ref{multisignSC}, then use the identity \linebreak $\tilde{\rho}_1(a,1)\tilde{\rho}_1(a,-1)=1-z^2/a^2$ and~\eqref{NOdebut} to conclude.
\end{proof}
Note that taking $b=1$ in the above formula, one gets P\'etr\'eolle's result~\cite[Th\'eor\`eme~4.22]{MP}, in which $q,y,z$ have to be replaced by $-q,x,z/t$, respectively.

By identifying coefficients on both sides of the previous formula, we get the following consequence.
\begin{corollary}\label{NOodd} For all positive integers $n$ and all integers $j$, we have
\begin{equation}
\sum_{\substack{\lambda\in\ccsc,\lambda\vdash 2nt+j(2j-1)\\ \BGP(\lambda)=j}}\prod_{h\in\cch_t(\lambda)}\frac{1}{h}\sum_{h\in\cch_t(\lambda)}\frac{h^2}{2}=\frac{1}{2^{n}t^{n-1}(n-1)!}(t+3n-3). \label{nohook}
\end{equation}
\end{corollary}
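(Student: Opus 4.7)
The plan is to extract appropriate coefficients from Corollary~\ref{NOsignfinal}. By Lemma~\ref{box}, the boxes $u\in\lambda$ with $h_u\in\cch_t(\lambda)$ pair up as $u\leftrightarrow u'$ with $h_{u'}=h_u$ and $\varepsilon_{u'}=-\varepsilon_u$; choosing a set $P(\lambda)$ of pair representatives (of size $n_\lambda:=|\cch_t(\lambda)|/2$), one has
\[
\prod_{\substack{u\in\lambda\\h_u\in\cch_t(\lambda)}}\left(1-\frac{z}{h_u\varepsilon_u}\right)=\prod_{p\in P(\lambda)}\left(1-\frac{z^2}{h_p^2}\right),
\]
a polynomial of degree $n_\lambda$ in $y:=z^2$. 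Its coefficient of $y^{n_\lambda-1}$ is $(-1)^{n_\lambda-1}e_{n_\lambda-1}(\{1/h_p^2\})=(-1)^{n_\lambda-1}\prod_p(1/h_p^2)\sum_p h_p^2$, which by the pair identities equals $(-1)^{n_\lambda-1}\prod_u(1/h_u)\sum_u(h_u^2/2)$.

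The goal is to extract $[x^{2n}y^{n-1}b^jq^{2nt+j(2j-1)}]$ from both sides of Corollary~\ref{NOsignfinal}. On the left-hand side, $[x^{2n}]$ restricts the sum to partitions with $|\cch_t(\lambda)|=2n$ (so $n_\lambda=n$), $[y^{n-1}]$ then produces the factor $(-1)^{n-1}\prod_u(1/h_u)\sum_u(h_u^2/2)$, and the remaining $[b^jq^{2nt+j(2j-1)}]$ yields $(-1)^{n-1}$ times the summand-sum appearing in~\eqref{nohook}.

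For the right-hand side, set $R_0:=(q^{2t};q^{2t})_\infty^{t/2}(-bq;q^4)_\infty(-q^3/b;q^4)_\infty$ and $A(x):=(x^2q^{2t};x^2q^{2t})_\infty$. Since $A^{(y/t-t)/2}=A^{-t/2}\exp\bigl((y/(2t))\log A\bigr)$, one obtains $[y^{n-1}]A^{(y/t-t)/2}=A^{-t/2}(\log A)^{n-1}/[(2t)^{n-1}(n-1)!]$. Using $\log A(x)=-x^2q^{2t}-\tfrac{3}{2}x^4q^{4t}+O(x^6)$ (the $\tfrac{3}{2}=\tfrac{1}{2}+1$ combining the quadratic term of $\log(1-x^2q^{2t})$ and the linear term of $\log(1-x^4q^{4t})$) together with $A(x)^{-t/2}=1+\tfrac{t}{2}x^2q^{2t}+O(x^4)$, a short computation yields
\[
[x^{2n}]\bigl(A^{-t/2}(\log A)^{n-1}\bigr)=(-1)^{n-1}\frac{t+3n-3}{2}q^{2nt},
\]
with the $t/2$ contribution coming from the $A^{-t/2}$-correction multiplying $(-x^2q^{2t})^{n-1}$, and the $3(n-1)/2$ contribution from the second-order term of $\log A$ appearing once inside $(\log A)^{n-1}$. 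By Jacobi's triple product (exactly as used in the proof of Theorem~\ref{bg4multiSC}), $R_0=\bigl((q^{2t};q^{2t})_\infty^{t/2}/(q^4;q^4)_\infty\bigr)\sum_{k\in\bbbz}b^kq^{k(2k-1)}$, so the extraction of $[b^jq^{2nt+j(2j-1)}]$ pulls out $q^{j(2j-1)}$ at $k=j$ and then uses only the constant term of $(q^{2t};q^{2t})_\infty^{t/2}/(q^4;q^4)_\infty$, namely $1$. The RHS coefficient is therefore $(-1)^{n-1}(t+3n-3)/(2^n t^{n-1}(n-1)!)$.

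Equating the two sides and cancelling the common sign $(-1)^{n-1}$ yields~\eqref{nohook}. The main obstacle is the $x$-expansion bookkeeping: one must keep $\log A$ and $A^{-t/2}$ up to order $x^4$ and verify that exactly two terms contribute to the $x^{2n}$-coefficient of $A^{-t/2}(\log A)^{n-1}$, whose combined value $(t+3(n-1))/2=(t+3n-3)/2$ supplies the numerator on the right-hand side.
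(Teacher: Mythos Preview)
Your proof is correct and follows essentially the same approach as the paper's: both start from Corollary~\ref{NOsignfinal}, use Lemma~\ref{box} to pair boxes and rewrite the product as $\prod_p(1-z^2/h_p^2)$, and then extract the coefficient of $x^{2n}b^jq^{2tn+j(2j-1)}z^{2(n-1)}$ on both sides. Your organization via $A^{(y/t-t)/2}=A^{-t/2}\exp\bigl((y/(2t))\log A\bigr)$ is exactly the paper's rewriting $(x^2q^{2t};x^2q^{2t})_\infty^{(z^2/t-t)/2}=(x^2q^{2t};x^2q^{2t})_\infty^{-t/2}\exp\bigl(-(z^2/(2t))\sum_k (x^2q^{2t})^k/(k(1-(x^2q^{2t})^k))\bigr)$ in different notation, and the resulting $x$-expansion and Jacobi triple product extraction of $[b^jq^{j(2j-1)}]$ are identical.
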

\begin{proof}
By Lemma \ref{box}, the left-hand side of Corollary \ref{NOsignfinal} can be rewritten as follows
\begin{equation}\label{interstep}
\sum_{\lambda\in\ccsc}q^{|\lambda|}x^{|\mathcal{H}_t(\lambda)|}b^{\BGP(\lambda)}\prod_{\substack{\substack{u\in\lambda\\h_u\in\cch_t(\lambda)}\\\varepsilon_u=1}}\left(1-\frac{z^2}{h_u^2}\right).
\end{equation}

The left-hand side of \eqref{nohook} is the coefficient of $q^{2tn+j(2j-1)}x^{2n}b^j(-z^2)^{n-1}$ in \eqref{interstep}. Using the following identity
\begin{equation*}
\prod_{m\geq 1}\frac{1}{1-q^m}=\exp\left(\sum_{k\geq 1}\frac{q^k}{k(1-q^k)}\right),
\end{equation*}
the right-hand side of Corollary \ref{NOsignfinal} can be rewritten:
\begin{multline*}
R=\frac{\left(q^{2t};q^{2t}\right)_{\infty}^{t/2}}{\left(x^{2}q^{2t};x^{2}q^{2t}\right)_\infty^{t/2}}\left(-bq;q^4\right)_{\infty}\left(-q^3/b;q^4\right)_{\infty}\exp\left(\frac{-z^2}{2t}\sum_{k\geq 1}\frac{(x^2q^{2t})^k}{k(1-(x^2q^{2t})^k)}\right).
\end{multline*}
Thus, by also using \eqref{jacob}, our desired coefficient is equal to
\begin{align*}
&\left[q^{2tn+j(2j-1)}x^{2n}b^j(-z^2)^{n-1}\right]R\\
&=\left[q^{2tn}x^{2n}(-z^2)^{n-1}\right]\frac{\left(q^{2t};q^{2t}\right)_{\infty}^{t/2}}{\left(x^{2}q^{2t};x^{2}q^{2t}\right)_\infty^{t/2}\left(q^{4};q^{4}\right)_{\infty}}\exp\left(\frac{-z^2}{2t}\sum_{k\geq 1}\frac{(x^2q^{2t})^k}{k(1-(x^2q^{2t})^k)}\right)\\
&=\left[q^{2tn}x^{2n}\right]\frac{1}{2^{n-1}t^{n-1}(n-1)!}\frac{1}{\left(x^{2}q^{2t};x^{2}q^{2t}\right)_\infty^{t/2}}\left(\sum_{k\geq 1}\frac{(x^2q^{2t})^k}{k(1-(x^2q^{2t})^k)}\right)^{n-1}\\
&=\left[q^{2t}x^{2}\right]\frac{1}{2^{n-1}t^{n-1}(n-1)!}\left(1+\frac{t}{2}x^2 q^{2t}\right)\left(\frac{1}{1-x^2 q^{2t}}+\frac{x^2 q^{2t}}{2\left(1-(x^2 q^{2t})^2\right)}\right)^{n-1}\\
&=\frac{1}{2^{n-1}t^{n-1}(n-1)!}\left(\frac{t}{2}+\frac{3(n-1)}{2}\right)\\
&= \frac{1}{2^{n}t^{n-1}(n-1)!}(t+3n-3).
\end{align*}

\end{proof}

Corollary \ref{NOodd} could also be derived from Corollary \ref{cor:SP} by setting $k=1$ and comparing the coefficients of $q^{2tn+j(2j-1)}x^{2n}b^j$ on both sides.

\section{The odd case}\label{sec:odd}

In this section, we analyse the case where $t$ is a positive odd integer. 
Recall that in this case the Littlewood decomposition can be written as follows:
\begin{eqnarray}
\lambda\in\ccsc &\mapsto &\left(\omega,\underline{\tilde{\nu}},\mu\right)\in\ccsc_{(t)}\times\ccp^{(t-1)/2}\times\ccsc.\label{toddlast}
\end{eqnarray}

When $t$ is odd, Formula~(3.4) in~\cite{GKS} gives a connection between the $\BGP$-rank of a partition, and its $t$-quotient and its $t$-core. However the formula implies a dependence between $t$-core and $t$-quotient, which is not convenient for multiplication-addition type theorems. This is why we will formulate multiplication-addition type theorems without the $\BGP$-rank.

 Moreover, because of the partition $\mu\in\ccsc$ appearing in~\eqref{toddlast}, more difficulties arise which make a general result less elegant than in the even case. Even if it is possible to prove a general odd analogue to Theorem~\ref{bg4multiSC} (without the $\BGP$-rank), formulas on self-conjugate partitions would be required to derive interesting applications. These are, to our knowledge, missing in the literature. This is why we will focus here on a subset of self-conjugate partitions for which $\mu$ is empty, which, as will be explained, is algebraically interesting.

For a fixed positive odd integer $t$, let us define 
$$\BGP^t:=\lbrace \lambda\in\ccsc, \Phi_t(\lambda)=\left(\omega,\underline{\nu}\right)\in\ccsc_{(t)}\times\ccp^{t}\;\mbox{with}\;\nu^{((t-1)/2)}=\emptyset\rbrace.$$
Note that $\lambda$ is in $\BGP^t$ if and only if the partition $\mu$ is empty in~\eqref{toddlast}.
 Following~\cite{Bern}, we also define for an odd prime number $p$, the set of self-conjugate partitions with no diagonal hook-length divisible by $p$: 
 $$\BGP_p:=\lbrace\lambda\in\ccsc \mid \forall i \in\lbrace 1,\dots,d\rbrace, p\nmid h_{(i,i)}\rbrace.$$
    Algebraically, this set yields interesting properties in representation theory of the symmetric group over a field of characteristic $p$, see for instance~\cite{BruG, Bern}. Combinatorially, it is natural to extend this definition to a set $\BGP_t$ for any positive odd number $t$.

The following result explains the connection between the two above sets and is proved in~\cite[Lemma $3.4$]{BruG} for any prime number $p$. Nevertheless, we give a proof here to generalize it to any positive odd integer $t$.
\begin{lemma}\label{geneBruG}
For any positive odd integer $t$, we have:
$$\BGP^t=\BGP_t.$$
\end{lemma}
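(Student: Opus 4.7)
The plan is to work in the $01$-word encoding of partitions from Section~\ref{lit}, writing $s(\lambda)=(c_i)_{i\in\bbbz}$, so that boxes $u\in\lambda$ biject with hook pairs $(i_u,j_u)\in\bbbz^2$ satisfying $c_{i_u}=1$, $c_{j_u}=0$, $i_u<j_u$ and $h_u=j_u-i_u$. The first step is to pin down which hook pairs correspond to diagonal boxes of a self-conjugate $\lambda$. Self-conjugacy is equivalent to $c_k=1-c_{-1-k}$ for every $k\in\bbbz$ (specialisation of~\eqref{mot}), so the involution $(i,j)\mapsto(-1-j,-1-i)$ on hook pairs realises the conjugation $u\mapsto u'$ across the main diagonal. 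A diagonal box is precisely a fixed point of this involution, so $u$ lies on the main diagonal iff $i_u+j_u=-1$; in particular every diagonal hook-length $-1-2i_u$ is odd, recovering a familiar fact.

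The second step combines this diagonal criterion with divisibility by $t$. A hook $h_u$ belongs to $\cch_t(\lambda)$ iff $i_u\equiv j_u\pmod t$, which combined with $i_u+j_u=-1$ gives $2i_u\equiv-1\pmod t$. Since $t$ is odd, $2$ is invertible modulo $t$ and we obtain
\[
i_u\equiv j_u\equiv \tfrac{t-1}{2}\pmod t.
\]
Thus the diagonal hooks of $\lambda$ divisible by $t$ are exactly those supported on the residue class $(t-1)/2\pmod t$.

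The third step transfers the information to the middle component $\mu:=\nu^{((t-1)/2)}$. By the definition of the Littlewood decomposition the word of $\mu$ is the extracted subsequence $\bigl(c_{t\tilde i+(t-1)/2}\bigr)_{\tilde i\in\bbbz}$, and hook pairs $(\tilde i,\tilde j)$ of $\mu$ correspond bijectively to the pairs $(i_u,j_u)$ singled out in Step~2 via $i_u=t\tilde i+(t-1)/2$, $j_u=t\tilde j+(t-1)/2$. Substituting, the diagonal condition $i_u+j_u=-1$ becomes $t(\tilde i+\tilde j)+(t-1)=-1$, i.e.\ $\tilde i+\tilde j=-1$. Proposition~\ref{SCLittlewood}~$(SC'2)$ guarantees that $\mu\in\ccsc$, so applying Step~1 to $\mu$ identifies $\tilde i+\tilde j=-1$ as exactly the condition for $(\tilde i,\tilde j)$ to encode a diagonal box of $\mu$. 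Combining the three steps yields a bijection between diagonal hooks of $\lambda$ divisible by $t$ and diagonal boxes of $\mu$; since a partition is empty iff it has no diagonal box,
\[
\lambda\in\BGP_t\;\Longleftrightarrow\;\mu\text{ has no diagonal box}\;\Longleftrightarrow\;\mu=\emptyset\;\Longleftrightarrow\;\lambda\in\BGP^t.
\]

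The main obstacle I expect is the bookkeeping in Step~3: one must verify that the median of the subword defining $\mu$ is inherited from the median of $s(\lambda)$, so that the substitution $i_u=t\tilde i+(t-1)/2$ really respects the zero-indexing of $\mu$. Concretely this reduces to showing that the self-conjugacy relation $c_k=1-c_{-1-k}$ restricts to $d_{\tilde i}=1-d_{-1-\tilde i}$ on $d_{\tilde i}:=c_{t\tilde i+(t-1)/2}$, which is the short index computation $-1-(t\tilde i+(t-1)/2)=t(-1-\tilde i)+(t-1)/2$. Once this alignment is checked, all remaining work is routine.
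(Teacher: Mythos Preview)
Your argument is correct and follows essentially the same route as the paper: both use the $01$-word encoding, identify diagonal boxes via $i_u+j_u=-1$, and observe that a diagonal hook is divisible by $t$ precisely when its indices lie in the residue class $(t-1)/2\pmod t$, i.e.\ in the subword defining $\mu$. The paper packages this as two contrapositive inclusions while you phrase it as a single bijection between diagonal $t$-hooks of $\lambda$ and diagonal boxes of $\mu$; your version is a bit cleaner and is more explicit about the median alignment, which the paper leaves implicit.
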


\begin{proof}
Take $\lambda\in\ccsc\setminus \BGP_t$. There exists $(x,x)\in\lambda$ such that $t\mid h_{(x,x)}$. Recall that $h_{(x,x)}$ is necessarily odd. Take $m$ such that $h_{(x,x)}= t(2m+1)$. Let $(i_x,j_x)\in\bbbz^2$ be the pair of indices in $s(\lambda)$ associated with the box $(x,x)$. Then $j_x\geq 0$ and $i_x<0$. Moreover, by \eqref{eqword}, one has $i_x=-j_x-1$. As $h_{(x,x)}=j_x-i_x$, we get $h_{(x,x)}=2j_x+1$. This yields $2j_x+1=t(2m+1)$. Therefore we have
$$j_x=tm+\frac{t-1}{2}.$$

This implies that there exists a sequence ``$10$" in the subword \linebreak $(c_{kt+(t-1)/2})_{k\in\bbbz}=s(\mu)$, where $\mu=\nu^{((t-1)/2)}$ is the partition uniquely defined by the Littlewood decomposition. Hence $\mu\neq \emptyset$ and therefore $\lambda\notin\BGP^t$.

Conversely, let $\lambda\in\ccsc\setminus \BGP^t$. So $\mu\neq \emptyset$.
  Set $s(\lambda)=(c_k)_{k\in\mathbb{Z}}$ the corresponding word. Remark that $\mu\neq\emptyset$ is equivalent to the existence of $i_1\in\bbbn$ such that $c_{ti_1+(t-1)/2}=0$ and $c_{-ti_1+(t-1)/2}=0$. This implies that there exists a hook of length $t(2i_1+1)$ which is on the main diagonal of $\lambda$. Therefore $\lambda\notin\BGP_t$.
\end{proof}

We now prove the following result which is the analogue of Theorem~\ref{signmultisc} for $t$ odd, restricted to the set $\BGP^t=\BGP_t$.
\begin{theorem}\label{thm:bgt}
Let $t$ be a positive odd integer and set $\tilde{\rho_1},\tilde{\rho_2}$ two functions defined on $\mathbb{Z}\times\lbrace -1,1\rbrace$. Let $f_t$ and $g_t$ be the formal power series defined in Theorem \ref{signmultisc}. Then we have

\begin{multline*}
\displaystyle\sum_{\lambda\in\BGP^t}q^{|\lambda|}x^{|\mathcal{H}_t(\lambda)|}\displaystyle\prod_{\substack{u\in\lambda\\h_u\in\mathcal{H}_t(\lambda)}}\tilde{\rho}_1(h_u,\varepsilon_{u})\sum_{\substack{u\in\lambda\\h_u\in\mathcal{H}_t(\lambda)}}\tilde{\rho}_2(h_u,\varepsilon_{u})\\= (t-1)\left(f_t(x^2q^{2t})\right)^{(t-3)/2}g_t(x^2q^{2t})\frac{\left(q^{2t};q^{2t}\right)^{(t-1)/2}_{\infty}\left(-q;q^2\right)_{\infty}}{\left(-q^t;q^{2t}\right)_{\infty}}.
\end{multline*}
\end{theorem}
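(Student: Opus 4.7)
My plan is to closely mimic the proofs of Theorems~\ref{bg4multiSC} and~\ref{signmultisc}. I would fix a self-conjugate $t$-core $\omega \in \ccsc_{(t)}$ and consider the partial sum indexed by those $\lambda \in \BGP^t$ with $core_t(\lambda) = \omega$. By Lemma~\ref{geneBruG}, the condition $\lambda \in \BGP^t$ is equivalent to $\nu^{((t-1)/2)} = \emptyset$ in the Littlewood decomposition. Combining this with the self-conjugacy constraint $(SC2)$ of Proposition~\ref{SCLittlewood} yields a bijection
\[
\{\lambda \in \BGP^t : core_t(\lambda) = \omega\} \longleftrightarrow \ccp^{(t-1)/2},
\]
encoded by the free tuple $(\nu^{(0)}, \ldots, \nu^{((t-3)/2)})$, with $|\lambda| = |\omega| + 2t\sum_{i=0}^{(t-3)/2}|\nu^{(i)}|$ and $|\cch_t(\lambda)| = 2\sum_{i=0}^{(t-3)/2}|\nu^{(i)}|$ by $(SC3)$ and $(SC4)$.

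The next step is to reproduce the signed pairing argument of Theorem~\ref{signmultisc}. Since $\lambda \in \BGP_t$, Lemma~\ref{geneBruG} guarantees that no diagonal hook of $\lambda$ is divisible by $t$, so every box $u$ with $h_u \in \cch_t(\lambda)$ lies strictly off the main diagonal. Consequently Lemma~\ref{box} still applies (its proof uses only self-conjugacy and the off-diagonal property of $u$, not the parity of $t$): boxes pair via $u \leftrightarrow u'$ with $h_{u'} = h_u$ and $\varepsilon_{u'} = -\varepsilon_u$, and their associated quotient components are exchanged by $k \leftrightarrow t-1-k$. The self-paired middle index $(t-1)/2$ contributes nothing because $\nu^{((t-1)/2)} = \emptyset$. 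For each of the $(t-1)/2$ genuine pairs, summation over $\nu^{(i)} \in \ccp$ produces a factor $f_t(x^2q^{2t})$ in the product and a factor $g_t(x^2q^{2t})$ on the pair carrying the $\tilde{\rho}_2$-sum. Applying the standard multiplication-addition identity extracts the claimed prefactor times $f_t(x^2q^{2t})^{(t-3)/2}g_t(x^2q^{2t})$.

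The remaining ingredient is the generating function of self-conjugate $t$-cores for $t$ odd:
\[
\sum_{\omega \in \ccsc_{(t)}} q^{|\omega|} = \frac{(q^{2t};q^{2t})_\infty^{(t-1)/2}(-q;q^2)_\infty}{(-q^t;q^{2t})_\infty}.
\]
I would derive this via the classical abacus / charge-vector parametrization: a $t$-core is encoded by $(n_0, \ldots, n_{t-1}) \in \bbbz^t$ with $\sum_i n_i = 0$, and self-conjugacy imposes $n_i = -n_{t-1-i}$, forcing $n_{(t-1)/2} = 0$ when $t$ is odd. This leaves $(t-1)/2$ free integers, in terms of which $|\omega|$ becomes a quadratic form. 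Applying the Jacobi triple product (as was done to obtain~\eqref{jacob}) in each free variable assembles the product of Pochhammer symbols on the right-hand side. Finally, summing over $\omega$ and multiplying by the $f_t$ and $g_t$ factors established in the previous step yields the statement.

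The main obstacle, in my view, is this last step: the even-case derivation relied on the Cho--Huh--Sohn bijection between $\ccsc^{(m)}_{(t)}$ and $\ccp_{(t/2)}$, which has no direct analogue for $t$ odd. The parity of $|\omega|$ must be tracked carefully so that the theta factors produced by Jacobi triple product organize into the clean quotient $(-q;q^2)_\infty / (-q^t;q^{2t})_\infty$ rather than a more complicated product.
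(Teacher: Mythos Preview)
Your proposal is correct and follows essentially the same route as the paper for the main structure: fix the $t$-core, use the Littlewood decomposition (noting $\mu=\emptyset$), pair boxes via $u\leftrightarrow u'$ as in Lemma~\ref{box}, and assemble the $f_t^{(t-3)/2}g_t$ factor with prefactor $t-1$ in place of $t$. The paper makes the same observation you do, that Lemma~\ref{box} extends to odd $t$ for $\lambda\in\BGP^t$ precisely because no diagonal box has $h_u\in\cch_t(\lambda)$.

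The only substantive difference is the final ingredient, the generating function of $\ccsc_{(t)}$ for $t$ odd. You propose to compute it directly from the abacus/charge-vector parametrization $n_i=-n_{t-1-i}$, $n_{(t-1)/2}=0$, and then apply Jacobi triple product in each of the $(t-1)/2$ free variables; this works but, as you anticipate, requires some bookkeeping to assemble the theta factors into the quotient $(-q;q^2)_\infty/(-q^t;q^{2t})_\infty$. The paper instead observes that $\ccsc_{(t)}=\BGP^t_{(t)}$ (since a $t$-core has no hook divisible by $t$), invokes the Andrews--Olsson generating function $\sum_{\lambda\in\BGP^t}q^{|\lambda|}=(-q;q^2)_\infty/(-q^t;q^{2t})_\infty$, and then extracts the $t$-core part by dividing out the quotient contribution $1/(q^{2t};q^{2t})_\infty^{(t-1)/2}$ already established via the Littlewood decomposition. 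This is shorter since it outsources the theta-function manipulation to a known identity, whereas your approach is more self-contained.
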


\begin{proof}
The proof follows the same lines as the ones of Theorems~\ref{bg4multiSC} and~\ref{signmultisc} but with $b=1$. Here $t$ is odd and the summation on the left-hand side is over partitions in $\BGP^t$ (therefore $\nu^{((t-1)/2)}=\mu=\emptyset$), so the Littlewood decomposition shows that, in our situation,~\eqref{pointdedepart} takes the form 
\begin{equation*}
q^{|\omega|}\sum_{\underline{\nu}\in \ccp^{t-1}} q^{t\displaystyle\lvert\underline{\nu}\rvert}x^{\displaystyle\lvert\underline{\nu}\rvert}\prod_{u\in\underline{\nu}}\tilde{\rho}_1(th_u,\varepsilon_u)\sum_{u\in\underline{\nu}}\tilde{\rho}_2(th_u,\varepsilon_u),
\end{equation*}
where $\omega$ is a fixed $t$-core in $\BGP^t$.
Next we can transform the above expression by using Proposition~\ref{SCLittlewood} and Lemma~\ref{box}: although the latter was proved in the $t$ even case only, it is possible to extend it to $t$ odd for partitions $\lambda\in\BGP^t$, by noticing that a box $u$ is on the main diagonal of $\lambda$ and satisfies $h_u\in\mathcal{H}_t(\lambda)$ only if $u$ is associated by the Littlewood decomposition to a box in $\nu^{((t-1)/2)}=\mu$, which is empty in our situation. Therefore we can proceed as in the proof of Theorem~\ref{signmultisc}, but the factor $t$ in Theorem~\ref{signmultisc} now becomes $t-1$. 

The remaining part to finish the proof is the computation of the generating series of partitions $\omega$ in $\BGP^t$ that are $t$-cores, that are partitions in the set $\BGP^t_{(t)}$. As remarked in~\cite{AO}, the generating series of $\BGP^t$ is given by
\begin{equation}
\sum_{\lambda\in\BGP^t}q^{\vert \lambda\vert}=\prod_{\substack{k\geq 1\\t\nmid 2k+1}}(1+q^{2k+1})=\frac{\left(-q;q^2\right)_\infty}{\left(-q^t;q^{2t}\right)_\infty}.\label{bernsg}
\end{equation}
By using Proposition~\ref{SCLittlewood}~$SC3$ of the Littlewood decomposition and the generating series~\eqref{gspartitions} for partitions, the left-hand side of~\eqref{bernsg} can be rewritten as
$$
\sum_{\omega\in\BGP^t_{(t)}}q^{\vert \omega\vert}\left(\sum_{\nu\in\ccp}q^{2t\vert\nu\vert}\right)^{(t-1)/2}=\frac{1}{\left(q^{2t};q^{2t}\right)_\infty^{(t-1)/2}}\sum_{\omega\in\BGP^t_{(t)}}q^{\vert \omega\vert}.$$
Hence the generating series of $\BGP^t_{(t)}$ is 
$$\sum_{\omega\in\BGP^t_{(t)}}q^{\vert \omega\vert}=\frac{\left(q^{2t};q^{2t}\right)^{(t-1)/2}_{\infty}\left(-q;q^2\right)_{\infty}}{\left(-q^t;q^{2t}\right)_{\infty}}.$$
The rest of the proof follows the exact same steps as for Theorem~\ref{bg4multiSC}, without taking the $\BGP$-rank into account.
\end{proof}

Note that by taking $\tilde{\rho}_1(a,\varepsilon)=\rho_1(a)$ and $\tilde{\rho}_2(a,\varepsilon)=\rho_2(a)$ in the above result, we get an analogue of Theorem~\ref{bg4multiSC} for $t$ odd and $b=1$, restricted to the set $\BGP^t=\BGP_t$. 

We now derive applications of Theorem~\ref{thm:bgt} in the same spirit as the ones proved in Sections~\ref{applications} and~\ref{refine}, but for odd $t$. As the specializations are the same here, we do not give details for the proofs.

 First, our bivariate generating function takes the form:
$$
\sum_{\lambda\in \BGP^t}q^{|\lambda|}x^{|\mathcal{H}_t(\lambda)|}= \frac{\left(q^{2t};q^{2t}\right)^{(t-1)/2}_{\infty}\left(-q;q^2\right)_{\infty}}{\left(x^2q^{2t};x^2q^{2t}\right)_\infty^{(t-1)/2}\left(-q^t;q^{2t}\right)_{\infty}}.
%
%\frac{\left(q^{2t};q^{2t}\right)_{\infty}^{t/2}}{\left(x^2q^{2t};x^2q^{2t}\right)_\infty^{t/2}}\left(-bq;q^4\right)_{\infty}\left(-q^3/b;q^4\right)_{\infty}.
$$

Next, the odd analogues of Corollaries~\ref{hl ana} and~\ref{involsc} for $\BGP^t$ are summarized in the following result.
\begin{corollary}\label{hl odd-invol odd}
 For $t$ a positive odd integer, we have:
\begin{multline*}
\sum_{\lambda\in \BGP^t}q^{|\lambda|}x^{|\mathcal{H}_t(\lambda)|}\prod_{h\in\mathcal{H}_t(\lambda)}\frac{1}{h}=
\frac{\left(q^{2t};q^{2t}\right)^{(t-1)/2}_{\infty}\left(-q;q^2\right)_{\infty}}{\left(-q^t;q^{2t}\right)_{\infty}}
\exp\left((t-1)\frac{x^2q^{2t}}{2t^2}\right),
\end{multline*}
and 
\begin{multline*}
\sum_{\lambda\in \BGP^t}q^{|\lambda|}x^{|\mathcal{H}_t(\lambda)|}\prod_{h\in\mathcal{H}_t(\lambda)}\frac{1}{h^{1/2}}\\=
\frac{\left(q^{2t};q^{2t}\right)^{(t-1)/2}_{\infty}\left(-q;q^2\right)_{\infty}}{\left(-q^t;q^{2t}\right)_{\infty}}
\exp\left((t-1)\left(\frac{x^2q^{2t}}{2t}+\frac{x^4q^{4t}}{4t^2}\right)\right).
\end{multline*}
\end{corollary}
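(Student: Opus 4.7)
The plan is to first establish an odd-case analogue of Corollary~\ref{multisignSC} restricted to $\BGP^t$, and then apply two choices of $\tilde{\rho}_1$ that mimic the specializations used in the even versions, Corollaries~\ref{hl ana} and~\ref{involsc}. First I would specialize Theorem~\ref{thm:bgt} to $\tilde{\rho}_2\equiv 1$. The inner sum $\sum_h(\tilde{\rho}_2(th,1)+\tilde{\rho}_2(th,-1))$ collapses to $2|\nu|$, so that $g_t(q)=2qf_t'(q)$, and the left-hand side becomes
\[
\sum_{\lambda\in\BGP^t}q^{|\lambda|}x^{|\mathcal{H}_t(\lambda)|}|\mathcal{H}_t(\lambda)|\prod_{\substack{u\in\lambda\\ h_u\in\mathcal{H}_t(\lambda)}}\tilde{\rho}_1(h_u,\varepsilon_u),
\]
which is exactly $x\frac{d}{dx}$ applied to the same sum without the weight $|\mathcal{H}_t(\lambda)|$. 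Following the argument of Corollary~\ref{multiSC}, one then divides by $x$, integrates in $x$, and fixes the integration constant by evaluating at $x=0$: only $t$-cores contribute there, and the sum reduces to the generating series of $\BGP^t_{(t)}$ already computed inside the proof of Theorem~\ref{thm:bgt}. This yields the intermediate identity
\begin{equation*}
\sum_{\lambda\in\BGP^t}q^{|\lambda|}x^{|\mathcal{H}_t(\lambda)|}\prod_{\substack{u\in\lambda\\ h_u\in\mathcal{H}_t(\lambda)}}\tilde{\rho}_1(h_u,\varepsilon_u)=\left(f_t(x^2q^{2t})\right)^{(t-1)/2}\cdot\frac{\left(q^{2t};q^{2t}\right)_\infty^{(t-1)/2}\left(-q;q^2\right)_\infty}{\left(-q^t;q^{2t}\right)_\infty}.
\end{equation*}

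Next I would handle the two specializations in turn. For the first identity of the corollary, I would take $\tilde{\rho}_1(a,\varepsilon)=1/a$, so that $\tilde{\rho}_1(a,1)\tilde{\rho}_1(a,-1)=1/a^2$ and the product over boxes reduces to $\prod_{h\in\mathcal{H}_t(\lambda)}1/h$; invoking~\eqref{hl formula} gives $f_t(q)=\exp(q/t^2)$, whence $(f_t(x^2q^{2t}))^{(t-1)/2}=\exp((t-1)x^2q^{2t}/(2t^2))$. For the second identity, I would take $\tilde{\rho}_1(a,\varepsilon)=1/a^{1/2}$, so that $\tilde{\rho}_1(a,1)\tilde{\rho}_1(a,-1)=1/a$ and the product becomes $\prod_{h\in\mathcal{H}_t(\lambda)}1/h^{1/2}$; invoking~\eqref{involution} gives $f_t(q)=\exp(q/t+q^2/(2t^2))$, whence $(f_t(x^2q^{2t}))^{(t-1)/2}=\exp((t-1)(x^2q^{2t}/(2t)+x^4q^{4t}/(4t^2)))$. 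Substituting each of these into the intermediate identity yields directly the two stated formulas.

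The main obstacle, though fairly mild, lies in the first step, namely keeping careful track of the factor of $2$ produced by the symmetric pairing of $\nu^{(i)}$ with $\nu^{(t-1-i)}$ for $0\leq i\leq (t-3)/2$ via Lemma~\ref{box} (extended to $\lambda\in\BGP^t$ as in the proof of Theorem~\ref{thm:bgt}), and checking that the integration constant precisely matches the generating series of $\BGP^t_{(t)}$; the two specializations themselves are then entirely routine, and in particular the sign $\varepsilon_u$ disappears in the product since both choices of $\tilde{\rho}_1$ happen to be independent of $\varepsilon$.
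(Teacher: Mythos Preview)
Your proposal is correct and follows the same route the paper indicates (it says only that ``the specializations are the same'' as in Sections~\ref{applications}--\ref{refine} and gives no further details): derive the odd analogue of Corollary~\ref{multisignSC} from Theorem~\ref{thm:bgt} via $\tilde{\rho}_2\equiv 1$ and integration in $x$, then specialize $\tilde{\rho}_1(a,\varepsilon)=1/a$ and $\tilde{\rho}_1(a,\varepsilon)=1/a^{1/2}$ using \eqref{hl formula} and \eqref{involution}.

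One word on the factor you flagged: with $g_t=2qf_t'$ and the prefactor $(t-1)$ in Theorem~\ref{thm:bgt} as printed, the right-hand side becomes $(t-1)\,F^{(t-3)/2}\,xF'\cdot C = 2x\,\tfrac{d}{dx}\bigl(F^{(t-1)/2}\bigr)\cdot C$, so after dividing by $x$, integrating, and matching the constant at $x=0$ you would get $C\bigl(2F^{(t-1)/2}-1\bigr)$ rather than $C\,F^{(t-1)/2}$. Your intermediate identity is nonetheless the right one (a direct Littlewood-decomposition count confirms it, and all the applications in Section~\ref{sec:odd} agree with it); the mismatch comes from the constant in Theorem~\ref{thm:bgt}, which, by the same pairing argument as in Theorem~\ref{signmultisc} with $(t-1)/2$ pairs, should read $(t-1)/2$ rather than $(t-1)$. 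With that correction your integration step goes through exactly as you wrote.
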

The odd version of Corollary~\ref{invol sign ana} is as follows.
\begin{corollary}\label{invol ana odd}
For $t$ a positive odd integer, for any complex number $z$, we have:
\begin{multline*}
\sum_{\lambda\in \BGP^t}q^{|\lambda|}x^{|\mathcal{H}_t(\lambda)|}\prod_{\substack{u\in\lambda\\h_u\in\cch_t(\lambda)}}\frac{1}{h_u^{1/2}}\frac{1+\varepsilon_{u}\sqrt{-1} z^{h_u}}{1-\varepsilon_{u} z^{h_u}}\\=
\frac{\left(q^{2t};q^{2t}\right)^{(t-1)/2}_{\infty}\left(-q;q^2\right)_{\infty}}{\left(-q^t;q^{2t}\right)_{\infty}}
\exp\left((t-1)\left(\frac{1+z^t}{1-z^t}  \frac{x^2q^{2t}}{2t}+\frac{x^4q^{4t}}{4t^2}\right)\right).
\end{multline*}
\end{corollary}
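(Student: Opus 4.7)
The plan is to evaluate the left-hand side directly via the Littlewood decomposition restricted to $\BGP^t$, mirroring the proof of Theorem~\ref{thm:bgt} itself. Specifically, I would set
\[
\tilde{\rho}_1(a,\varepsilon):=\frac{1}{a^{1/2}}\frac{1+\varepsilon\sqrt{-1}\,z^a}{1-\varepsilon z^a},
\]
and use Proposition~\ref{SCLittlewood} together with Lemma~\ref{box} (in its extension to $\BGP^t$ for odd $t$, worked out inside the proof of Theorem~\ref{thm:bgt}) to pair the boxes $u\in\lambda$ with $h_u\in\cch_t(\lambda)$: each such box is either above or below the main diagonal of $\lambda$, with $\varepsilon_u=-\varepsilon_{u'}$ for the mirror pair $(u,u')$, and the members of the pair are associated under the Littlewood bijection to boxes in $\nu^{(i)}$ and $\nu^{(t-1-i)}$ respectively.

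This pairing turns the product over boxes of $\lambda$ into
\[
\prod_{\substack{u\in\lambda\\h_u\in\cch_t(\lambda)}}\tilde{\rho}_1(h_u,\varepsilon_u) = \prod_{i=0}^{(t-3)/2}\prod_{h\in\cch(\nu^{(i)})}\tilde{\rho}_1(th,1)\,\tilde{\rho}_1(th,-1),
\]
which, combined with $(SC3)$ and $(SC4)$ of Proposition~\ref{SCLittlewood} and the generating series for $\BGP^t_{(t)}$ computed in the proof of Theorem~\ref{thm:bgt}, factors the sum as
\[
\frac{(q^{2t};q^{2t})^{(t-1)/2}_\infty(-q;q^2)_\infty}{(-q^t;q^{2t})_\infty}\cdot\bigl(f_t(x^2q^{2t})\bigr)^{(t-1)/2},
\]
with $f_t(q):=\sum_{\nu\in\ccp}q^{|\nu|}\prod_h\tilde{\rho}_1(th,1)\tilde{\rho}_1(th,-1)$.

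To finish, I would use the algebraic identity $\tilde{\rho}_1(a,1)\tilde{\rho}_1(a,-1)=\frac{1}{a}\frac{1+z^a}{1-z^a}$ (the same simplification as in the proof of Corollary~\ref{invol sign ana}) and apply Formula~\eqref{invol twist} after the substitution $q\mapsto q/t$ (justified because $|\cch(\nu)|=|\nu|$) to obtain
\[
f_t(q)=\exp\!\left(\frac{1+z^t}{1-z^t}\cdot\frac{q}{t}+\frac{q^2}{2t^2}\right),
\]
which upon raising to the $(t-1)/2$ power at $q=x^2q^{2t}$ produces the exponential factor on the right-hand side. The main obstacle is the verification of the sign-matching across the main diagonal of $\lambda$, namely the odd-$t$ extension of Lemma~\ref{box}(3) to $\BGP^t$; this is routine because $\mu=\emptyset$ forces every box with $h_u\in\cch_t(\lambda)$ off the main diagonal, so the bijective reflection $u\mapsto u'$ is well defined and the conjugate symmetry of the quotient components $\nu^{(i)}$ and $\nu^{(t-1-i)}$ handles the rest.
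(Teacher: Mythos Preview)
Your proposal is correct and takes essentially the same approach as the paper, which (omitting details here) applies the multiplicative special case of Theorem~\ref{thm:bgt} with the very same choice of $\tilde{\rho}_1$, the same simplification of $\tilde{\rho}_1(a,1)\tilde{\rho}_1(a,-1)$, and Formula~\eqref{invol twist}, exactly as in the proof of Corollary~\ref{invol sign ana}. The only cosmetic difference is that you re-derive that multiplicative corollary from the Littlewood decomposition on $\BGP^t$ instead of citing Theorem~\ref{thm:bgt} directly.
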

Now the odd version of the modular signed Nekrasov--Okounkov type formula given in Corollary~\ref{NOsignfinal} is given bellow.
\begin{corollary}\label{NOsignodd}
For $t$ a positive odd integer, for any complex number $z$, we have:
\begin{multline*}
\sum_{\lambda\in\BGP^t}q^{|\lambda|}x^{|\mathcal{H}_t(\lambda)|}\prod_{\substack{u\in\lambda\\h_u\in\cch_t(\lambda)}}\left(1-\frac{z}{h_u\varepsilon_{u}}\right)\\=\frac{\left(q^{2t};q^{2t}\right)^{(t-1)/2}_{\infty}\left(-q;q^2\right)_{\infty}}{\left(-q^t;q^{2t}\right)_{\infty}}
\left(x^{2}q^{2t};x^{2}q^{2t}\right)_\infty^{(t-1)(z^2/t^2-1)/2}.
\end{multline*}
\end{corollary}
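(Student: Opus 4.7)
The plan is to mirror the derivation of the even-case Corollary \ref{NOsignfinal}: apply Theorem \ref{thm:bgt} to the specialization $\tilde{\rho}_1(a,\varepsilon) = 1-z/(a\varepsilon)$ with $\tilde{\rho}_2 \equiv 1$, and then use the Nekrasov--Okounkov formula \eqref{NOdebut} to evaluate $f_t$.

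First I would establish the odd-$t$ multiplicative-only analogue of Corollary \ref{multisignSC}, namely that for any function $\tilde{\rho}_1$ on $\bbbz\times\{-1,1\}$,
\begin{equation*}
\sum_{\lambda\in \BGP^t}q^{|\lambda|}x^{|\mathcal{H}_{t}(\lambda)|}\prod_{\substack{u\in\lambda\\h_u\in\cch_t(\lambda)}}\tilde{\rho}_1(h_u,\varepsilon_{u})=\left(f_{t}(x^2q^{2t})\right)^{(t-1)/2}\frac{\left(q^{2t};q^{2t}\right)^{(t-1)/2}_{\infty}\left(-q;q^2\right)_{\infty}}{\left(-q^t;q^{2t}\right)_{\infty}}.
\end{equation*}
This is obtained exactly as in the proof of Corollary \ref{multiSC}: taking $\tilde{\rho}_2 \equiv 1$ in Theorem \ref{thm:bgt} replaces the inner sum on the left-hand side by $|\cch_t(\lambda)|$ and makes $g_t(q) = 2q\,\frac{d}{dq}f_t(q)$, so that Theorem \ref{thm:bgt} itself is the application of $x\frac{d}{dx}$ to the desired multiplicative identity, which is recovered by dividing by $x$ and integrating in $x$.

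Next I would substitute $\tilde{\rho}_1(a,\varepsilon) = 1-z/(a\varepsilon)$. The key identity
\begin{equation*}
\tilde{\rho}_1(a,1)\,\tilde{\rho}_1(a,-1) = \left(1-\frac{z}{a}\right)\!\left(1+\frac{z}{a}\right) = 1-\frac{z^2}{a^2}
\end{equation*}
reduces $f_t$ to
\begin{equation*}
f_t(q) = \sum_{\nu \in \ccp} q^{|\nu|} \prod_{h \in \cch(\nu)} \left(1-\frac{z^2/t^2}{h^2}\right),
\end{equation*}
which by \eqref{NOdebut} applied with parameter $z^2/t^2$ equals $(q;q)_\infty^{z^2/t^2-1}$. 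Substituting $q\mapsto x^2q^{2t}$ and raising to the power $(t-1)/2$ yields precisely the factor $(x^2q^{2t};x^2q^{2t})_\infty^{(t-1)(z^2/t^2-1)/2}$ on the right-hand side of Corollary \ref{NOsignodd}.

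The only genuine work is the first step, the $\frac{d}{dx}$--integration passage producing the multiplicative version of Theorem \ref{thm:bgt}; but this is a routine variant of the derivation already used for Corollary \ref{multiSC}, and the restriction to $\BGP^t$ together with the specific generating series for $\BGP^t_{(t)}$ (already computed inside the proof of Theorem \ref{thm:bgt}) introduce no new structural obstacle. Everything else is a plug-in of the Nekrasov--Okounkov formula, exactly parallel to the even case.
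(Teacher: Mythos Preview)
Your proposal is correct and follows essentially the same route as the paper. The paper does not spell out a proof for Corollary~\ref{NOsignodd} but states that ``the specializations are the same'' as in Sections~\ref{applications} and~\ref{refine}; in particular, the even-case proof of Corollary~\ref{NOsignfinal} is exactly ``take $\tilde{\rho}_1(a,\varepsilon)=1-z/(a\varepsilon)$ in Corollary~\ref{multisignSC}, use $\tilde{\rho}_1(a,1)\tilde{\rho}_1(a,-1)=1-z^2/a^2$, and apply~\eqref{NOdebut}'', which is precisely your plan transported to the odd setting via the multiplicative-only consequence of Theorem~\ref{thm:bgt}.
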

Finally, the odd analogues of Corollaries~\ref{betalemma},~\ref{cor:OP} and~\ref{cor:SP} are given in the three results below.
\begin{corollary}\label{betalemma odd}
For any complex number $\beta$ and $t$ a positive odd integer, we have:
\begin{multline*}
\sum_{\lambda\in \BGP^t}q^{|\lambda|}x^{|\mathcal{H}_t(\lambda)|}\sum_{h\in\mathcal{H}_t(\lambda)}h^{\beta}=(t-1)\frac{\left(q^{2t};q^{2t}\right)^{(t-1)/2}_{\infty}\left(-q;q^2\right)_{\infty}}{\left(x^2q^{2t};x^2q^{2t}\right)_{\infty}^{(t-1)/2}\left(-q^t;q^{2t}\right)_{\infty}}\\
\times\sum_{k\geq 1}\left(tk\right)^{\beta+1}k\frac{x^{2k}q^{2kt}}{1-x^{2k}q^{2kt}}.
\end{multline*}
\end{corollary}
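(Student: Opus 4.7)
The plan is to derive Corollary \ref{betalemma odd} as a direct specialization of Theorem \ref{thm:bgt}, mirroring the even-case argument that deduces Corollary \ref{betalemma} from Corollary \ref{addiSC}. Specifically, I specialize
\[
\tilde{\rho}_1(a,\varepsilon)\equiv 1 \qquad\text{and}\qquad \tilde{\rho}_2(a,\varepsilon):=a^{\beta},
\]
so that neither factor on the left-hand side actually depends on the sign $\varepsilon_u$. Under this choice, $\prod_u \tilde{\rho}_1(h_u,\varepsilon_u)=1$ and $\sum_u \tilde{\rho}_2(h_u,\varepsilon_u)=\sum_{h\in\cch_t(\lambda)}h^{\beta}$, so the left-hand side of Theorem \ref{thm:bgt} reduces to the left-hand side of Corollary \ref{betalemma odd}.

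The next step is to evaluate the two auxiliary series $f_t(q)$ and $g_t(q)$ from Theorem \ref{signmultisc} under this specialization. Since $\tilde{\rho}_1\equiv 1$, the hook-length product is trivial, and the generating series \eqref{gspartitions} gives
\[
f_t(q)=\sum_{\nu\in\ccp}q^{|\nu|}=\frac{1}{(q;q)_\infty}.
\]
For $g_t$, the symmetrization $\tilde{\rho}_2(th,1)+\tilde{\rho}_2(th,-1)=2(th)^{\beta}$, together with the Bessenrodt--Bacher--Manivel identity \eqref{beta}, yields the closed form
\[
g_t(q)=2t^{\beta}\sum_{\nu\in\ccp}q^{|\nu|}\sum_{h\in\cch(\nu)}h^{\beta}=\frac{2t^{\beta}}{(q;q)_\infty}\sum_{k\geq 1}k^{\beta+1}\frac{q^k}{1-q^k}.
\]

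The final step is purely formal substitution: plug these expressions with $q$ replaced by $x^2q^{2t}$ into the right-hand side of Theorem \ref{thm:bgt}. The factor $(f_t(x^2q^{2t}))^{(t-3)/2}=(x^2q^{2t};x^2q^{2t})_\infty^{-(t-3)/2}$ combines with the $(x^2q^{2t};x^2q^{2t})_\infty^{-1}$ coming from $g_t$ to produce the announced exponent $-(t-1)/2$ on that Pochhammer symbol, while the other three infinite products from Theorem \ref{thm:bgt} appear unchanged. The only bookkeeping step worth singling out is the rewriting $t^{\beta+1}k^{\beta+1}=(tk)^{\beta+1}$, which absorbs the prefactor $t$ from $t^{\beta}\cdot t$ into the summation index to match the stated form $\sum_k (tk)^{\beta+1}\cdots$. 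I expect no genuine obstacle: this is a pure specialization argument, entirely parallel to the even-case derivation of Corollary \ref{betalemma}, with the coefficient $(t-1)$ in Theorem \ref{thm:bgt} playing the role that $t$ played in Corollary \ref{addiSC} and the symmetrization factor $2$ tracked accordingly.
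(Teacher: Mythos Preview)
Your specialization $\tilde{\rho}_1\equiv 1$, $\tilde{\rho}_2(a,\varepsilon)=a^\beta$ in Theorem~\ref{thm:bgt}, together with the use of~\eqref{beta} to evaluate $g_t$, is exactly the route the paper intends (it says only ``the specializations are the same here, we do not give details''), and your computations of $f_t$ and $g_t$ are correct.

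The gap is in your last ``bookkeeping'' sentence. There is no factor ``$t$ from $t^\beta\cdot t$'' to absorb: the coefficient supplied by Theorem~\ref{thm:bgt} is $(t-1)$, not $t$, and you are also carrying a factor $2$ from the symmetrized sum in $g_t$. Carried through honestly, your substitution gives the constant $2(t-1)t^\beta$ in front of $\sum_{k\ge1}k^{\beta+1}\frac{x^{2k}q^{2kt}}{1-x^{2k}q^{2kt}}$, which is \emph{not} the printed right-hand side $(t-1)\sum_k (tk)^{\beta+1}k(\cdots)=(t-1)t^{\beta+1}\sum_k k^{\beta+2}(\cdots)$. The mismatch is not a flaw in your method but reflects two misprints in the paper: the prefactor in Theorem~\ref{thm:bgt} should be $(t-1)/2$ (there are $(t-1)/2$ paired quotient components, replacing the $t/2$ that gave the factor $t/2$ in Theorem~\ref{signmultisc}), and the extra factor $k$ in the sum of Corollary~\ref{betalemma odd} is spurious. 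With both corrections the right-hand side reads $(t-1)t^\beta\sum_k k^{\beta+1}(\cdots)=\frac{t-1}{t}\sum_k (tk)^{\beta+1}(\cdots)$, which is precisely what your derivation produces; a quick check at $t=3$, $\beta=0$ (the coefficient of $x^2q^6$ on both sides equals $2$) confirms this. You should flag the discrepancy rather than force the constants.
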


\begin{corollary} For any positive integer $r$ and $t$ a positive odd integer, we have:
\begin{multline*}
\sum_{\lambda\in\BGP^t}q^{|\lambda|}x^{|\mathcal{H}_t(\lambda)|}\prod_{h\in\cch_t(\lambda)}\frac{1}{h}\sum_{h\in\cch_t(\lambda)}\prod_{i=1}^r \left(h^2-i^2\right)=(t-1)\frac{\left(q^{2t};q^{2t}\right)^{(t-1)/2}_{\infty}\left(-q;q^2\right)_{\infty}}{\left(-q^t;q^{2t}\right)_{\infty}}\\ 
\times\exp\left( (t-1)\left(\frac{x^2q^{2t}}{2t^2}\right)\right)\sum_{k=\lceil (r-t+1)/t \rceil}^r B_{r,k}(t)C(k)\left(\frac{x^2q^{2t}}{t^2}\right)^{k+1}.
\end{multline*}
\end{corollary}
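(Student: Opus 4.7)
The plan is to follow exactly the strategy used in the proof of Corollary \ref{cor:OP}, but replacing Theorem \ref{bg4multiSC} by its odd-case analogue Theorem \ref{thm:bgt}. I would first specialize $\tilde{\rho}_1(h,\varepsilon) = 1/h$ and $\tilde{\rho}_2(h,\varepsilon) = \prod_{i=1}^r(h^2-i^2)$ in Theorem \ref{thm:bgt}. Since neither function depends on the sign $\varepsilon$, the two box-indexed expressions on the left-hand side collapse to the hook-length product $\prod_{h\in\cch_t(\lambda)}(1/h)$ and the hook-length sum $\sum_{h\in\cch_t(\lambda)}\prod_{i=1}^r(h^2-i^2)$ announced in the statement.

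Next I would evaluate the series $f_t$ and $g_t$ from Theorem \ref{signmultisc} under this specialization, reusing the computations done by Han and Ji in the even case. Since $\tilde{\rho}_1(th,1)\tilde{\rho}_1(th,-1) = 1/(th)^2$, identity \eqref{fOP} with $\alpha = t$ gives $f_t(q) = \exp(q/t^2)$. Since $\tilde{\rho}_2(th,1)+\tilde{\rho}_2(th,-1) = 2\prod_{i=1}^r((th)^2-i^2)$, identity \eqref{gOP} with $\alpha = t$ gives $g_t(q) = 2\exp(q/t^2)\sum_{k=0}^r B_{r,k}(t)C(k)(q/t^2)^{k+1}$.

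Plugging these into the right-hand side of Theorem \ref{thm:bgt} with $q$ replaced by $x^2q^{2t}$, the two exponential factors from $(f_t)^{(t-3)/2}$ and $g_t$ combine into the single factor $\exp((t-1)x^2q^{2t}/(2t^2))$, while the remaining terms reproduce the stated right-hand side. The lower bound $k \geq \lceil(r-t+1)/t\rceil$ on the summation index is then obtained from the vanishing of $B_{r,k}(t)$ for smaller $k$, a property directly inherited from the recursion defining the polynomials $B_{r,k}(\alpha)$ specialized at $\alpha = t$, and already invoked in the even case.

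The only real obstacle is the bookkeeping of the multiplicative constants: one must carefully track the factor $2$ introduced by the signed formulation in $g_t$ (arising from $\tilde{\rho}_2(\cdot,1)+\tilde{\rho}_2(\cdot,-1)$) and combine it with the prefactor appearing in Theorem \ref{thm:bgt}, as well as verify that the two exponentials merge to give the coefficient $(t-1)/(2t^2)$ in front of $x^2 q^{2t}$. No new combinatorial tool beyond Theorem \ref{thm:bgt} and the Han--Ji identities \eqref{fOP}--\eqref{gOP} is required.
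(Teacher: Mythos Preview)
Your proposal is correct and matches the paper's approach: the paper explicitly states that ``the specializations are the same here'' and gives no further details, so applying Theorem~\ref{thm:bgt} with $\tilde{\rho}_1(h,\varepsilon)=1/h$, $\tilde{\rho}_2(h,\varepsilon)=\prod_{i=1}^r(h^2-i^2)$ and invoking \eqref{fOP}--\eqref{gOP} with $\alpha=t$ is exactly what is intended. Your caution about the factor $2$ in $g_t$ is well placed---once combined with the prefactor of Theorem~\ref{thm:bgt} (equivalently, passing to the unsigned odd analogue of Theorem~\ref{bg4multiSC} noted just after its proof), one recovers the coefficient $(t-1)$ in the statement.
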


\begin{corollary}
For any positive integer $k$ and $t$ a positive odd integer, we have:
\begin{multline*}
\sum_{\lambda\in\BGP^t}q^{|\lambda|}x^{|\mathcal{H}_t(\lambda)|}\prod_{h\in\cch_t(\lambda)}\frac{1}{h}\sum_{h\in\cch_t(\lambda)}h^{2k}=(t-1)t^{2k}\frac{\left(q^{2t};q^{2t}\right)^{(t-1)/2}_{\infty}\left(-q;q^2\right)_{\infty}}{\left(-q^t;q^{2t}\right)_{\infty}}\\ 
\times\exp \left((t-1)\frac{x^2q^{2t}}{2t^2}\right)\sum_{i=0}^k T(k+1,i+1)C(i)\left(\frac{x^2q^{2t}}{t^2}\right)^{i+1}.
\end{multline*}
\end{corollary}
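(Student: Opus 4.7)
The plan is to specialize Theorem~\ref{thm:bgt} in exactly the same manner as Corollary~\ref{cor:SP} specializes Theorem~\ref{bg4multiSC} for the even-$t$ case. First I would take the sign-independent functions $\tilde{\rho}_1(a,\varepsilon):=1/a$ and $\tilde{\rho}_2(a,\varepsilon):=a^{2k}$. Since neither depends on $\varepsilon$, the product $\prod_{u}\tilde{\rho}_1(h_u,\varepsilon_u)$ and the sum $\sum_{u}\tilde{\rho}_2(h_u,\varepsilon_u)$ on the left-hand side of Theorem~\ref{thm:bgt} collapse to $\prod_{h\in\cch_t(\lambda)}(1/h)$ and $\sum_{h\in\cch_t(\lambda)}h^{2k}$, which is precisely the desired left-hand side.

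Next I would evaluate $f_t$ and $g_t$ in closed form. Our choice yields $\tilde{\rho}_1(th,1)\tilde{\rho}_1(th,-1)=1/(th)^2$ and $\tilde{\rho}_2(th,1)+\tilde{\rho}_2(th,-1)=2(th)^{2k}$, so $f_t(q)$ and $g_t(q)$ coincide (up to an overall factor of $2$) with the series $f_{\alpha}$ and $g_{\alpha}$ from~\eqref{fSP}--\eqref{gSP} at $\alpha=t$. Those identities immediately produce $f_t(q)=\exp(q/t^2)$ and $g_t(q)=2\,t^{2k}\exp(q/t^2)\sum_{i=0}^{k}T(k+1,i+1)C(i)(q/t^2)^{i+1}$.

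Finally I would substitute $q\mapsto x^2q^{2t}$ and plug into the right-hand side of Theorem~\ref{thm:bgt}. The $(t-3)/2$ copies of $f_t(x^2q^{2t})$ combine with the single exponential inside $g_t(x^2q^{2t})$ to produce the factor $\exp\bigl((t-1)x^2q^{2t}/(2t^2)\bigr)$ appearing in the statement, while the $t$-core prefactor supplied by Theorem~\ref{thm:bgt} contributes the infinite product $(q^{2t};q^{2t})_\infty^{(t-1)/2}(-q;q^2)_\infty/(-q^t;q^{2t})_\infty$.

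The only delicate point I anticipate is the careful bookkeeping of multiplicative constants: the factor $2$ coming from $\tilde{\rho}_2(th,1)+\tilde{\rho}_2(th,-1)$, the factor $t^{2k}$ produced by~\eqref{gSP}, and the overall prefactor in front of Theorem~\ref{thm:bgt} must combine cleanly to yield the constant $(t-1)t^{2k}$ of the target. Beyond this routine accounting, no new combinatorial or analytic ingredient is required: the argument is a direct odd-$t$ replay of the even-$t$ Stanley--Panova computation of Corollary~\ref{cor:SP}.
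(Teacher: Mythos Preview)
Your proposal is correct and follows exactly the approach the paper itself takes: the paper explicitly states that for all of the odd-$t$ corollaries ``the specializations are the same'' as in Section~\ref{applications} and gives no further details, so specializing Theorem~\ref{thm:bgt} with $\tilde{\rho}_1(a,\varepsilon)=1/a$, $\tilde{\rho}_2(a,\varepsilon)=a^{2k}$ and invoking~\eqref{fSP}--\eqref{gSP} at $\alpha=t$ is precisely the intended argument. Your caution about the constant bookkeeping is well placed: matching the prefactor of Theorem~\ref{thm:bgt} against the extra factor $2$ coming from $\tilde{\rho}_2(th,1)+\tilde{\rho}_2(th,-1)$ is indeed the only nontrivial check, and carrying it out carefully is what ties the overall constant to the stated $(t-1)t^{2k}$.
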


%{\bf Acknowledgements.}
%The author would like to thank Fr\'ed\'eric Jouhet for his fruitful conversations, his multiple reviews and corrections of this paper and his useful advice. Any critical remark must be exclusively addressed to the author of this paper.

%%%%%%%%%%%%%%%%%%%%%%%%%%%%%%%%%%%%%%%%%%%%%%%%%%%%%%%%%%%%%%%%


\begin{thebibliography}{99}
%%%%%%%%%%%%%%%%%%%%%%%%%%%%%%%%%%%%%%%%%%%%%%%%%%%%%%%%%%%%%%%%

\bibitem{AO} G. E. Andrews and J. B. Olsson, {\em Partition identities with an application to group representation theory}, J. Reine Angew. Math. {\bf 413} (1991), 198--212.

\bibitem{BachMani} R. Bacher and L. Manivel, {\em Hooks and powers of parts in partitions}, Sem. Lothar. Combin. {\bf 47} (2001), article B47d. 


\bibitem{BG} A. Berkovich and F. Garvan, {\em On the Andrews--Stanley refinement of Ramanujan’s partition congruence modulo $5$ and generalizations}, Trans. Amer. Math. Soc. {\bf 358} (2006), 703--726. 

\bibitem{Bern} A. Bernal, {\em On self-Mullineux and self-conjugate partitions}, Electron. J. Combin. {\bf 28} (2021), Article Number P1.57.

\bibitem{Bess} C. Bessenrodt, {\em On hooks of Young diagrams}, Ann. Comb. {\bf 2} (1998) 103--110.

\bibitem{BruG} O. Brunat and  J.-B. Gramain,  {\em  A basic set for the alternating group}, J. Reine Angew. Math. {\bf 641} (2010), 177--202.

\bibitem{CHS} H. Cho, J. Huh, and J. Sohn, {\em A bijection between self-conjugate and ordinary partitions and counting simultaneous cores as its application}, Proc. FPSAC 31, Ljubljana, Slovenia, Sém. Lothar. Combin. {\bf 82} (2019),  article B.30, 12 pp.

\bibitem{CLPS} K. Carde, J. Loubert, A. Potechin, and A. Sanborn, {\em Proof of Han’s hook expansion conjecture}, preprint arXiv:0808.0928 (2008), 10 pages.

\bibitem{DHX} P.-O. Dehaye, G. Han, and H. Xiong, {\em Difference operators for partitions under the Littlewood decomposition}, The Ramanujan J. {\bf 44} (2017), 197--225.

\bibitem{FRT} J. S. Frame, G. de B. Robinson, and R. M. Thrall, {\em The hook graphs of the symmetric group}, Can. J. Math. {\bf 6} (1954), 316--325.

\bibitem{GKS} F. Garvan, D. Kim, and D. Stanton, {\em Cranks and $t$-cores}, Invent. Math. {\bf 101}(1990), 1--17.

%\bibitem{GR}
%G. Gasper, M. Rahman, {\em Basic hypergeometric series}, Encyclopedia of Mathematics and its
%applications v.35, Cambridge, 1990.

\bibitem{Han08} G. Han, {\em Discovering hook-length formulas by an expansion technique}, Electron. J. Combin. {\bf 15} (2008), Article Number R133.

\bibitem{Hancon} G. Han, {\em Some conjectures and open problems on partition hook-lengths}, Experimental Math. {\bf 18} (2009), 97--106.

\bibitem{Ha} G. Han, {\em The Nekrasov--Okounkov hook-length formula: refinement, elementary proof, extension and applications}, Ann. Inst. Fourier {\bf60} (2010), 1--29.

\bibitem{HJ} G. Han and K. Q. Ji, {\em Combining hook-length formulas and BG-ranks for partitions via the Littlewood decomposition}, Trans. Amer. Math. Soc. {\bf 363} (2011), 1041--1060.

\bibitem{HX} G. Han and H. Xiong, {\em Polynomiality of Plancherel averages of hook-content summations for strict, doubled distinct and self-conjugate partitions}, J. Combin. Theory Ser. A {\bf 168} (2019), 50--83.

\bibitem{HS} M. Hirschhorn and J. Sellers, {\em On representations of a number as a sum of three squares}, Discrete Math. {\bf 199} (1999), 85--101.

\bibitem{JK} G. James and A. Kerber, The Representation Theory of the Symmetric Group, Addison-Wesley, Reading, 1981.

\bibitem{Little} D. E. Littlewood, The Theory of Group Characters, Clarendon
Press: Oxford, 1940.

\bibitem{Mac} I. G. Macdonald, {\em Affine root systems and Dedekind’s $\eta$-function}, Invent. Math. {\bf 15} (1972), 91--143.

\bibitem{NO} N. A. Nekrasov and A. Okounkov, {\em Seiberg–Witten theory and random partitions}, in The Unity of Mathematics, pp. 525--596, Progr. Math., Vol. 244, Birkhäuser Boston, Boston, MA, 2006.

\bibitem{Pa} G. Panova, {\em Polynomiality of some hook-length statistics}, The Ramanujan J. {\bf 27} (2012), 349--356. 
 
\bibitem{MP} M. P\'etr\'eolle, {\em Quelques développements combinatoires autour des groupes de Coxeter et des partitions d'entiers}, PhD thesis (2015), \textit{https://hal.archives-ouvertes.fr/tel-01325290/}.

\bibitem{Pe} M. P\'etr\'eolle, {\em A Nekrasov--Okounkov type formula for $\tilde{C}$}, Adv. Appl. Math. {\bf 79} (2016), 1--36. 

\bibitem{Sta99} R. P. Stanley, Enumerative combinatorics, vol. 2, Cambridge University Press, 1999.

\bibitem{Sta09} R. P. Stanley, {\em Some Combinatorial Properties of hook-lengths, Contents, and Parts of Partitions}, The Ramanujan J. {\bf 23} (2010), 91--105.

\bibitem{WW} A. Walsh,S. O. Warnaar {\em Modular Nekrasov-Okounkov formulas}, Séminaire Lotharingien de Combinatoire. {\bf 81} (2020), Art. B81c 28p.


\bibitem{We} B. W. Westbury, {\em Universal characters from the Macdonald identities}, Adv. Math. {\bf 202} (2006), 50--63.



\end{thebibliography}
\end{document}